\documentclass[11pt, oneside]{amsart}
\usepackage{wrapfig}
\usepackage{tikz}
\usepackage{comment}
\usepackage{mathrsfs}
\usepackage{tabularx, hyperref}
\usepackage{amssymb} \usepackage{amsfonts}
 \usepackage{amsmath}
\usepackage{amsthm} \usepackage{epsfig, subfig}
\usepackage{ amscd, amsxtra, latexsym}
\usepackage{epsfig,  graphicx, psfrag}
\usepackage[all]{xy}
\usepackage{caption}
\usepackage{enumerate}
\usepackage{color}

\addtolength{\captionmargin}{1cm}

\DeclareMathOperator\supp{Supp}
\DeclareMathOperator\lip{Lip}

\DeclareMathOperator\isom{Isom}
\DeclareMathOperator\Def{def}

\newtheorem{lemma}{Lemma}[section]
\newtheorem{thm}[lemma]{Theorem}
\newtheorem{prop}[lemma]{Proposition}
\newtheorem{cor}[lemma]{Corollary}

\newtheorem*{prop*}{Proposition}
\newtheorem{prop_intro}{Proposition}
\newtheorem{thm_intro}[prop_intro]{Theorem}

\newtheorem{quest_intro}[prop_intro]{Question}
\theoremstyle{definition}
\newtheorem{defn_intro}[prop_intro]{Definition}
\newtheorem{defn}[lemma]{Definition}

\newtheorem{quest}[lemma]{Question}

\newtheorem{rem}[lemma]{Remark}

\theoremstyle{definition}
\newtheorem{remark}[lemma]{Remark}

\definecolor{darkgreen}{cmyk}{1,0,1,.2}

\newcommand{\g} {\ensuremath {\gamma}}

\DeclareMathOperator{\alt}{alt}

\newcommand{\QCa}{{\rm QZ}_{\alt}}
\newcommand{\QC}{{\rm QZ}}

 % comando da cancellare
 % comando da cancellare

\renewcommand{\:}{\,:}
\newcommand{\Cay}{\ensuremath{{\rm Cay}}}
\newcommand{\actson}{\ensuremath{\curvearrowright}}
\newcommand\red{\textrm{red}}

\newcommand{\ov}{\ensuremath {\overline}}
\newcommand{\N}{\ensuremath {\mathbb{N}}}
\newcommand{\R} {\ensuremath {\mathbb{R}}}

\newcommand{\Z} {\ensuremath {\mathbb{Z}}}
\newcommand{\G} {\ensuremath {\Gamma}}

\newcommand{\calC} {\ensuremath {\mathcal{C}}}

\newcommand{\calH} {\ensuremath {\mathcal{H}}}

\newcommand{\calL}{\ensuremath {\mathcal{L}}}

\newcommand{\calX} {\ensuremath {\mathcal{X}}}

\newcommand{\calR} {\ensuremath {\mathcal{R}}}
\newcommand{\bah} {\ensuremath {{\infty,0}}}
\renewcommand{\phi}{\varphi}

\newcommand{\cal}{\mathcal}

\newcommand{\Isom}{\ensuremath{{\rm Isom}}}

\newcommand{\wt}{\widetilde}
\newcommand{\vol}{{\rm vol}}
\renewcommand{\H}{\mathbb H}
\renewcommand{\S}{\Sigma}

\begin{document}

\title[The zero norm subspace for acylindrically hyperbolic groups]{The zero norm subspace of bounded cohomology of acylindrically hyperbolic groups}

\author[]{F. Franceschini}
\address{Karlsruher Institut f\"ur Technologie (KIT), Fakult\"at f\"ur Mathematik, Institut f\"ur Algebra und Geometrie Englerstra\ss e. 2, 76131 Karlsruhe, Deutschland}
\email{federico.franceschini@kit.edu}

\author[]{R. Frigerio}
\address{Dipartimento di Matematica, Universit\`a di Pisa, Largo B. Pontecorvo 5, 56127 Pisa, Italy}
\email{frigerio@dm.unipi.it}

\author[]{M. B. Pozzetti}
\address{Mathematics Institute, Zeeman Building, University of Warwick, Coventry CV4 7AL United Kingdom}
\email{B.Pozzetti@Warwick.ac.uk}

\author[]{A. Sisto}
\address{Department Mathematik, ETH Z\"urich, 
R\"amistrasse 101, CH-8092 Z\"urich, Switzerland}
\email{sisto@math.ethz.ch}
\date{\today}
\thanks{}

\keywords{}
%\begin{abstract}
%We show that, in degree 3, the zero norm subspace of the 
% bounded cohomology of an acylindrically hyperbolic group is infinite dimensional. As a key step, we  first consider the case of non-abelian free groups, for which we describe a combinatorial 3-dimensional volume form defining a non-trivial bounded class.
% Our construction provides a quasification of classical volume forms previously studied by Soma, and exploits Groves and Manning's homological bicombing for relatively hyperbolic groups together with some filling inequalities due to Franceschini. 
% After introducing a new seminorm on exact bounded cohomology, we then reduce the general case to the case of free groups building on previous results by Frigerio, Pozzetti and Sisto.
%\end{abstract}
\begin{abstract}
We construct combinatorial volume forms of hyperbolic three manifolds fibering over the circle. These forms define non-trivial classes in bounded cohomology.  
After introducing a new seminorm on exact bounded cohomology, we use these combinatorial classes to show that,  
in degree 3, the zero norm subspace of the  bounded cohomology of an acylindrically hyperbolic group is infinite dimensional. 
In the appendix we use the same techniques to give a cohomological proof of a lower bound, originally due to Brock, on the volume of the mapping torus of a cobounded pseudo-Anosov homeomorphism of a closed surface in terms of its Teichm\"uller translation distance.
\end{abstract}
\maketitle

Even for very well-studied groups such as non-abelian free groups, the task of computing the bounded cohomology in higher degrees is  still challenging. 
In degree 2, the technology of quasimorphisms has been extensively  exploited to construct non-trivial bounded cohomology
classes (see e.g.~\cite{Brooks, EpsteinFuji, Fujiwara1,BeFu-wpd, FujiTAMS} for the case of trivial coefficients, 
and~\cite{HullOsin,BeBrFu,CFI} for more general coefficient modules). On the other hand, in higher degrees both constructing bounded cocycles and showing that such cocycles define non-trivial bounded cohomology classes is 
definitely non-trivial. For example, as far as the authors know, 
in the case of non-abelian free groups, non-trivial bounded classes in degree 3 have been constructed only with the help of hyperbolic geometry (see e.g.~\cite{Soma3,Soma5, Soma1}), and it is still a major open question whether
the fourth bounded cohomology of non-abelian free groups vanishes or not.

The purpose of this paper is twofold. First, we construct a discrete $3$-dimensional volume form
on a class of free-by-cyclic groups.
Then, building on results from~\cite{FPS}, we exploit our construction to show that, for every acylindrically hyperbolic group, the space of bounded classes with vanishing seminorm is infinite dimensional in degree 3.

Following a suggestion by Mladen Bestvina, our construction is based on a suitable relative version of Mineyev's bicombing on hyperbolic groups~\cite{Mineyev1},
which is due to Groves and Manning~\cite{GM} and Franceschini~\cite{Franceschini2}. Dealing with a discrete volume form rather than with differential forms allows us to provide a somewhat unified version of the 
arguments developed in~\cite{Soma1},
where some essential estimates make use of a careful comparison between the volume forms arising from the hyperbolic and the singular Sol structure supported by hyperbolic $3$-manifolds that fiber over the circle. 
We hope that our combinatorial arguments, although clearly inspired by their differential counterpart, could be more easily extended to wider classes of groups and, maybe, even to higher degrees.

\subsection*{Bounded cohomology of discrete groups}
Let $\G$ be a group. 
We briefly recall the definition of bounded cohomology of $\G$ (with trivial real coefficients), referring the reader to Section~\ref{background:sec} for more details.
We denote by $C^n(\G)$ the set of real-valued homogeneous $n$-cochains on $\G$,
and for every $\varphi\in C^n(\G)$
we set
$$
\|\varphi\|_\infty=\sup \{|\varphi (g_0,\ldots,g_n)|\, |\, (g_0,\ldots,g_n)\in \G^{n+1}\}\ \in \ [0,\infty]\ .
$$
We denote by $C^n_b(\G)\subseteq C^n(\G)$ the subspace
of bounded cochains, and by $C^n(\G)^\G$, $C^n_b(\G)^\G$ the subspaces of invariant (bounded) cochains.
The cohomology of the complex
$C^*_b(\G)^\G$ is the
\emph{bounded cohomology} $H_b^*(\G)$ of $\G$.
The norm $\|\cdot\|_\infty$ on $C^n_b(\G)$ induces a seminorm on $H^n_b(\G)$ that is usually called the \emph{Gromov seminorm}.

The inclusion of (invariant) bounded cochains into ordinary cochains induces the \emph{comparison map} 
$c^n\colon H^n_b(\G)\to H^n(\G)$. The kernel of $c^n$ is the set of bounded cohomology classes whose representatives are
exact, and it is denoted by $EH^n_b(\G)$. 
By definition, a class $\alpha \in EH^{n+1}_b(\G)$ is represented
by a bounded cocycle $z=\delta \varphi\in C^{n+1}_b(\G)^\G$, where $\varphi\in C^{n}(\G)^\G$ is a (possibly unbounded) %unbounded 
cochain. 
In other words, if we define the space $\QC^n(\G)\subseteq C^n(\G)$ of $n$-quasi-cocycles as the 
subset of cochains having bounded differential, 
then the differential induces a surjection 
$\QC^n(\G)^\G\longrightarrow EH^{n+1}_b(\G)$.

We denote by $N^n(\G)$ the subspace of $H^n_b(\G)$ given by elements with vanishing Gromov seminorm. 
%The vanishing of $N^n(\G)$ is equivalent to the fact that the Gromov seminorm on $H^n_b(\G)$ is a norm, which is in turn equivalent to the fact that
%$H^n_b(\G)$ is a Banach space. 
It is easy to show that $N^n(\G)\subseteq EH^n_b(\G)$ for every $n\in\mathbb{N}$ (see Lemma~\ref{lem:Nexact}). 
It was proved by Matsumoto and Morita~\cite{Matsu-Mor}  and independently by Ivanov~\cite{Ivanov2} that
$N^2(\G)=0$ for every group $\G$. On the other hand, Soma proved that $N^3(F_2)\neq 0$~\cite{Soma2}, and that the dimension of $N^3(\Gamma_g)$ has the cardinality of the continuum~\cite{Soma1}, where
$F_2$ and $\Gamma_g$ denote respectively the free group on two generators and the fundamental group of a closed orientable surface of genus $g\geq 2$.

\subsection*{Main results}
In this paper we extend Soma's results as follows:

\begin{thm_intro}\label{generalcase}
Suppose that $\G$ is acylindrically hyperbolic. Then the dimension of $N^3(\G)$ has the cardinality of the continuum.
\end{thm_intro}

A group $\G$ is \emph{acylindrically hyperbolic} if it admits a non-elementary acylindrical action on
a Gromov hyperbolic space~\cite{Os-acyl}. The class of acylindrically hyperbolic groups includes many examples of interest:
non-elementary hyperbolic and relatively hyperbolic groups~\cite{DGO}, the mapping class group
of all but finitely many surfaces of finite type~\cite[Theorem 2.19]{DGO}, 
${\rm Out}(F_n)$ for $n\geq 2$~\cite[Theorem 2.20]{DGO},
groups acting geometrically on a proper CAT(0)
space with a rank one isometry (\cite{Si-contr} and \cite[Theorem 2.22]{DGO}),
fundamental groups of several graphs of groups~\cite{MO}, small cancellation groups~\cite{GS-smallcanc}, finitely presented residually finite groups with positive first $\ell^2$-Betti number as well as groups of deficiency at least 2~\cite{Osinlast}, and many more.
In particular, Theorem~\ref{generalcase} widely generalizes Soma's previously mentioned results.

In order to prove Theorem~\ref{generalcase} we proceed as follows. 
We introduce a new seminorm $\|\cdot \|_{\bah}$ on exact bounded cohomology, which satisfies the inequality
$\|\cdot\|_\bah\geq \|\cdot \|_\infty$: for every finite subset $S$ of $\G$ and class $\alpha\in EH^{n}_b(\G)$ we set
$$
\|\alpha\|_S=\inf \{\|\delta\varphi\|_\infty\, |\, \varphi\in C^{n-1}(\G)^\G,\, [\delta\varphi]=\alpha,\, \varphi|_{S^{n}}=0\}\ ,
$$
and we define
$$
\|\alpha\|_\bah=\sup \{\|\alpha\|_S,\, S\subseteq \G,\, S \ \textrm{finite}\}\ \in \ [0,+\infty]\ .
$$

We denote by $N^n_0(\G)$ the subspace of elements  $\alpha\in EH^n_b(\G)$ such that $\|\alpha\|_\bah=0$, so that $N^n_0(\G)\subseteq N^n(\G)$
for every $n\in\mathbb{N}$. 
The key step in our proof of Theorem~\ref{generalcase} is then provided by the following:

\begin{thm_intro}\label{main:thm}
 The dimension of $N^3_0(F_2)$ 
has the cardinality of the continuum.
\end{thm_intro}

This already implies Theorem~\ref{generalcase} for non-abelian free groups (and, therefore, for all groups that admit an epimorphism on $F_2$, e.g.~for surface groups).
We then exploit results from~\cite{FPS} to reduce the general case to the case of free groups. In fact, 
an acylindrically hyperbolic group $\G$ contains a hyperbolically embedded subgroup $H$ which is virtually free-non-abelian~\cite{DGO,Os-acyl} (in fact, random subgroups satisfy this property~\cite{MS-random_hyp_emb}). Moreover, \cite[Corollary 1.2]{FPS}
implies that the inclusion $H\hookrightarrow \G$ induces  a surjection of $EH^3_b(\G)$ onto $EH^3_b(H)$, which we know to be infinite-dimensional from Theorem~\ref{main:thm}. 
This does not quite suffice to conclude, since we do not know whether the surjection $EH^3_b(\G)\to EH^3_b(H)$ does restrict to a surjection $N^3(\G)\to N^3(H)$.
This last fact would be true provided that the map $EH^3_b(\G)\to EH^3_b(H)$ is undistorted, according to the following:

\begin{defn_intro}\label{undistorted:defn}
A map $f\colon V\to W$ between seminormed vector spaces is \emph{undistorted} if there exists $k\geq 0$ such that
for every $\alpha\in f(V)$ there exists $\beta\in V$ with $f(\beta)=\alpha$ and $\|\beta\|\leq k\cdot \|\alpha\|$.  
\end{defn_intro}

Unfortunately, we are not able to show that the surjection $EH^3_b(\G)\to EH^3_b(H)$ is undistorted with respect to Gromov seminorms. 
In fact, Remark~\ref{distortion:qc} says that this cannot be achieved at the level of quasi-cocycles, and 
therefore undistortion is a rather delicate matter related to coboundaries, which makes it far from clear that this should even be true. Nevertheless, 
 in Section~\ref{sec:2} we prove the following:

\begin{thm_intro}\label{FPS:thm}
 Let $H$ be hyperbolically embedded in $\G$, let $n\geq 2$ and suppose that $H^{ n-1}(H)$ is finite-dimensional. 
 If we endow both $EH^n_b(\G)$  and $EH^n_b(H)$ with the seminorm $\|\cdot\|_\bah$, then
the inclusion $H\hookrightarrow \G$ induces an undistorted surjection
 $$
 EH^n_b(\G)\longrightarrow EH^n_b(H)\ .
 $$
\end{thm_intro}

This immediately implies that $\dim N^3(\G)\geq \dim N_0^3(\G)\geq \dim N^3_0(H)$,
thus allowing us to deduce Theorem~\ref{generalcase} from Theorem~\ref{main:thm}. Indeed, much more is true: 
due to the definition of $\|\cdot\|_\bah$, the fact that $\dim N_0^3(\G)$ is infinite-dimensional implies  
that 
there are many non-trivial  classes in $EH^3_b(\G)$ with vanishing seminorm, each of which can be represented by cocycles that vanish on arbitrarily big
subsets of $\G$. This quite counterintuitive phenomenon vividly illustrates the failing of excision for bounded cohomology.

% \subsection*{A new seminorm on exact bounded cohomology}.
% 
% \Let us briefly describe the seminorm $\|\cdot \|_\bah$  mentioned above. 
% Let $\G$ be a group. If $S$ is a finite subset of $\G$ and $c\in C^n(\G)$ is a cochain, we say that $c$ vanishes on $S$, and we write $c|_S=0$, if
% $c(g_0,\ldots,g_n)=0$ whenever $g_i\in S$ for every $i=0,\ldots,n$. 
% 
% For every  class $\alpha\in EH^{n+1}_b(\G)$ we then set
% $$
% \|\alpha\|_S=\inf \{\|\delta\varphi\|_\infty\, |\, \varphi\in C^n(\G),\, [\delta\varphi]=\alpha,\, \varphi|_S=0\}\ ,
% $$
% and
% $$
% \|\alpha\|_\bah=\sup \{\|\varphi\|_S,\, S\subseteq G,\, S \ \textrm{finite}\}\ \in \ [0,+\infty]\ .
% $$
% By the very definition it follows that $\|\alpha\|_\bah\geq \|\alpha\|_\infty$ for every $\alpha\in EH^n_b(\G)$. 

\subsection*{Quasi-cocycles}
Bounded cohomology is often computed via suitable resolutions, that allow to better exploit the geometry of the group under consideration. 
For example, suppose that $\G$ acts on a set $X$.
Then we denote by $C^n(\G\curvearrowright X)$ (resp.~$C_b^n(\G\curvearrowright X)$) the space of maps (resp.~bounded maps)
from $X^{n+1}$ to $\R$, endowed with the $\G$-action defined by
$$
g\cdot \varphi(x_0,\ldots,x_n)=\varphi(g^{-1}x_0,\ldots,g^{-1}x_n)\ .
$$
The
obvious differential $\delta\colon C^n(\G\curvearrowright X)\to C^{n+1}(\G\curvearrowright X)$ preserves both $\G$-invariance and boundedness
of cochains, so one can define the 
bounded cohomology $H^*_b(\G\actson X)$ as the cohomology of the complex $C^*_b(\G\curvearrowright X)^\G$ of invariant bounded cochains.
%If $X=\G$, endowed with the left action by translations, one gets back the usual bounded cohomology $H^*_b(\G)$
%of $\G$.

The $\ell^\infty$-norm $\|\cdot\|_\infty$ on $C^n_b(\G\curvearrowright X)$ induces an $\ell^\infty$-seminorm
 on 
$H_b^n(\G\curvearrowright X)$, which is still denoted by $\|\cdot\|_\infty$.
Moreover,  if the action of $\G$ on $X$ is free, then 
$H^n_b(\G\curvearrowright X)$ is canonically isometrically isomorphic to $H^n_b(\G)$ for every $n\in\mathbb{N}$ (see Lemma~\ref{isometric:isom}).
In particular, $N^3(\G)$ is canonically isomorphic to the subspace
of elements of $H^3_b(\G\actson X)$ with vanishing seminorms. Every element with vanishing seminorm is exact (see Lemma~\ref{lem:Nexact}), so it can be represented
by a quasi-cocycle.
We are thus lead to investigate the space
$$
\QC^n(\G\actson X)=\{\varphi\in C^n(\G\actson X)\, |\, \|\delta\varphi\|_\infty<\infty \}
$$
of quasi-cocycles defined on $X$: namely, in order to prove that $H^3_b(\G)$  contains many elements with vanishing seminorm, we
will construct an uncountable family of invariant $2$-quasi-cocycles whose differential defines linearly independent bounded cohomology classes.

A crucial notion that keeps track of the seminorm of classes induced by quasi-cocycles
is the \emph{defect}:
just as in the case of quasi-morphisms, the \emph{defect} of a quasi-cocycle $\varphi\in \QC^n(\G\actson X)$ is given by
$$
\Def(\varphi)=\|\delta\varphi\|_\infty\ .
$$

%%%%%%%%%%%%%%%%%%%%
\subsection*{A combinatorial volume form on hyperbolic $3$-manifolds fibering over the circle}
%%%%%%%%%%%%%%%%%%%%
Let us now look more closely at the case we are interested in. Let $\G_0=F_2$ be the free group generated by the elements $a,b$,
and let us identify $\G_0$ with the fundamental group of the punctured torus $\Sigma$, in such a way that the conjugacy class of the commutator $[a,b]=a^{-1}b^{-1}ab$ corresponds to the isotopy class
of a simple closed curve winding around the puncture. 
We fix a group automorphism $\psi\colon \G_0\to \G_0$ induced by a pseudo-Anosov homeomorphism
$f\colon \Sigma\to \Sigma$. The automorphism $\psi$ preserves the conjugacy class of the commutator $[a,b]$, so, up to conjugacy,
we may suppose that $\psi([a,b])=[a,b]$.
The mapping torus
$$
M=\Sigma\times [0,1] \big/_\sim\, ,\qquad (x,0)\sim (f(x),1)
$$
has fundamental group isomorphic to the semidirect product
$\G = \G_0 \rtimes_{\psi} \Z$, where the generator $t$ of $\Z$ acts on $\G_0$ as follows:
 $ t g t^{-1}= \psi( g) $ for every $g\in \G_0$. The (cusp) subgroup $H$ of $\G$ is the subgroup generated by
 $t$ and (the image of) $[a,b]\in\G_0$, and it is isomorphic to $\Z\oplus\Z$.
 
 Recall that the pair $(\G,H)$ is relatively hyperbolic, either by Thurston's hyperbolization for manifolds fibering over the circle~\cite{Otal} and a fundamental result by Farb~\cite{Fa-relhyp}, or just by a combination theorem for relative hyperbolicity~\cite[Theorem 4.9]{MjReeves-combination}.

% By Thurston's hyperbolization for manifolds fibering over the circle~\cite{Otal} the mapping torus
% $$
% M=\Sigma\times [0,1] \big/_\sim\, ,\qquad (x,0)\sim (f(x),1)
% $$
% supports a complete finite-volume hyperbolic structure with one toric cusp. The fundamental group of $M$ is isomorphic to the semidirect product
% $\G = \G_0 \rtimes_{\psi} \Z$, where the generator $t$ of $\Z$ acts on $\G_0$ as follows:
%  $ t g t^{-1}= \psi( g) $ for every $g\in \G_0$. The cusp subgroup $H$ of $\G$ is isomorphic to $\Z\oplus\Z$ and generated by
%  $t$ and (the image of) $[a,b]\in\G_0$. From a fundamental result by Farb~\cite{Fa-relhyp}, the pair $(\G,H)$ is relatively hyperbolic. Indeed, fundamental groups of complete finite-volume hyperbolic manifolds
%  are the motivating examples from where the theory of relatively hyperbolic pairs originated. 

Starting from a Cayley graph of $\G$, one can construct a \emph{cusped graph} $X$ by gluing a copy of a combinatorial horoball based on $H$ to each left coset of $H$ in $\G$; we outline the construction in Section~\ref{bicombing:sec}.
It was first described by Groves and Manning in~\cite{GM}, and a similar construction is described in~\cite{Bow-relhyp}. 
%Indeed, in the sequel we will work with a suitable Rips complex over $X$ rather than with $X$ itself. 

%In particular, after endowing $X$ with the distance induced by its description as a graph,
%we will forget about the simplicial structure of $X$, and we will denote by the symbol $X$ simply the set of its vertices.

%\todo{La scelta qui sopra mi sembra molto funzionale. Facciamo attenzione a tenervi fede. Tuttavia, la toglierei da qui, se come mi sembra nell'introduzione non ne facciamo uso esplicito.}

The group $\G$  acts freely on $X$ by isometries,
% and the subgroup $H$ leaves the horoball $Y$ based on $H$ invariant. Therefore, if we set $H_0=H\cap \G_0\cong\mathbb{Z}$, then
%the relative bounded cohomology of the pair $(\G_0,H_0)$ is isometrically computed by the complex 
therefore the bounded cohomology of $\G_0$ can be isometrically computed by the complex 
$C^*_b({\Gamma_0 \actson X})^{\G_0}$. Moreover, being obtained by adding horoballs to (the Cayley graph of) a relatively hyperbolic group,
the graph $X$ is Gromov hyperbolic, and supports a quasi-geodesic homological bicombing with useful filling properties (see Section~\ref{bicombing:sec}). Indeed, $X$ is quasi-isometric to the hyperbolic $3$-space, and the bicombing may be exploited
to construct a combinatorial version of the hyperbolic volume form. In fact, since the cochains arising in our argument must all be $\G_0$-invariant, the combinatorial cocycles we construct should be thought of as
volume forms on the differential counterpart of $X/\G_0$, that is the infinite cyclic covering $M_0$ of $M$ associated to $\G_0<\G=\pi_1(M)$. 

As it is customary when dealing with ``quasifications'' of algebraic or differential notions, the direct construction of a volume cocycle on $X$ runs into difficulties, due to the fact that the coarse version of a cocycle needs not be a
cocycle. Therefore, in Section~\ref{sec:4} we rather construct a $\G_0$-invariant primitive of a volume form. Such primitive 
is a quasi-cocycle, and  its differential (which is automatically closed)
provides a combinatorial version
of the volume form on $M_0\cong \Sigma\times \R$. Following Soma's strategy, in order to construct an infinite-dimensional subspace of $EH^3(\G_0)$ out of this primitive,
we just consider the suitably chosen collection of quasi-cocycles obtained by taking the product of the original primitive with a collection of real functions on $M_0\cong \Sigma\times\R$. These functions are themselves constructed
by composing the projection $\Sigma\times\R\to\R$ with Lipschitz maps of $\R$ into itself. The outcome of this procedure is summarized by the following result, which provides the key ingredient for the proof
of Theorem~\ref{main:thm}:

\begin{thm_intro}\label{volumeform:thm}
 Let $\mathcal{L}(\mathbb{Z},\mathbb{R})$ be the space of Lipschitz real functions on $\mathbb{Z}$. There exist a constant $C>0$ and a linear map
 $$
 \alpha\colon \mathcal{L}(\mathbb{Z},\mathbb{R})\to \QCa^2({\G_0\actson X})^{\G_0}\,% , \qquad \alpha(f)=\alpha_f
 $$
 such that the following conditions hold:
\begin{enumerate}
 \item $\|\delta\alpha(f))\|_\infty \leq C\cdot \lip(f)$ for every $f\in\mathcal{L}(\mathbb{Z},\mathbb{R})$;
 \item $[\delta\alpha(f)]=0$ in $H^3_b(\G_0\actson X)\cong H^3_b(\G_0)$ if and only if $f$ is bounded.
\end{enumerate}
\end{thm_intro}

% Here and henceforth, by the symbol $\QCa^n({\G\actson X})$ we denote the subspace of \emph{alternating} quasi-cocycles,
% (see Section~\ref{background:sec}).
% In Section~\ref{sec:last} we will easily deduce Theorem~\ref{main:thm} from
% Theorem~\ref{volumeform:thm}, and this will also conclude our proof of Theorem~\ref{generalcase}.

\subsection*{Volumes of mapping tori}
We believe that the techniques developed in this paper, and especially the combinatorial description of a volume form, will have application in other contexts as well. As a first example in this direction, in the appendix 
we give a cohomological proof
of a volume estimate for hyperbolic 3--manifolds fibering over the circle, under a coboundedness assumption. Recall that a pseudo-Anosov homeomorphism $\psi\colon \Sigma_g\to\Sigma_g$ is $\epsilon$-cobounded if, 
denoting by $l$ its axis in the Teichm\"uller space endowed with the Teichm\"uller metric, the projection of $l$ is contained in the $\epsilon$-thick part $\cal M_g^\epsilon$ of the moduli space.
We denote by $\tau(\psi)$ the translation length of $\psi$ on the Teichm\"uller space endowed with the Teichm\"uller metric.

\begin{thm_intro}\label{thm:Aintro}
There exists a constant $C>0$ depending only on $\epsilon$ and $g$ such that, for any $\epsilon$-cobounded pseudo-Anosov $\psi:\Sigma_g\to\Sigma_g$, we have
$$\vol(M_\psi)\geq C\tau(\psi).$$
\end{thm_intro}
This result was originally proven by Brock with completely different techniques \cite{Brock}.  In fact, we emphasize that our proof actually gives an estimate on the simplicial volume of $M_\psi$, and we then deduce the volume estimate from the well-known proportionality between volume and simplicial volume for hyperbolic manifolds. However, in no other part of the proof we use the fact that $M_\psi$ is hyperbolic.

We decided to include such a result only in an appendix because the setting is slightly different from the rest of the paper. Since we only deal with compact manifolds, many of the technicalities involved in the main paper are not needed for this application. For this reason, a reader interested only in the construction of a combinatorial cocycle representing the volume form might want to read the appendix first.

\subsection*{Tl;dr: the definition of the quasi-cocycles} For future reference and to help the reader find the relevant definitions, we list here all notions involved in the construction of our quasi-cocycles, and we give the definition of the quasi-cocycles themselves.

\begin{itemize}
 \item $\Gamma_0$ is the free group on two generators, $a,b$.
 \item $\psi:\Gamma_0\to \Gamma_0$ is an automorphism induced by a pseudo-Anosov, and it preserves the commutator $[a,b]$.
 \item $\Gamma$ is the semidirect product $\Gamma_0\rtimes_\psi \mathbb Z$, and $H<\Gamma$ is the subgroup generated by $[a,b]$ and the stable letter $t$.
 \item $X$ is the cusped graph of $(\Gamma,H)$ (Definition \ref{defn:cusped}), which is $\delta$-hyperbolic. Vertices of $X$ are pairs $(g,n)$ with $g\in G$, $n\in\mathbb N$.
 \item $p:X^{(0)}\to \Gamma_0$ is defined by $\psi(g_0t^k,n)=g_0$, where $g_0\in\Gamma_0$. Also, $\theta:X^{(0)}\to \mathbb Z$ is defined by $\theta(g_0t^k,n)=k$, where $g_0\in\Gamma_0$.
 \item $\rho:\Gamma_0\to\isom^+(\mathbb{H}^2)$ is a hyperbolization, and $[a,b]$ fixes $\overline{q}\in\partial \mathbb H^2$. For $x_0,x_1,x_2$ vertices of $X$, the sign $\epsilon(x_0,x_1,x_2)$ is $1$, $-1$ or $0$ depending on the orientation of the ideal triangle of $\mathbb H^2$ with vertices $\rho(p(x_i))\overline q$ (Subsection \ref{subsec:area}).
 \item $\mathcal X$ is the Rips complex on $X$ with constant $\kappa\geq 4\delta+6$ (Definition \ref{defn:rips}).
 \item $\phi$ is a relative filling map, i.e. a map from $X^3$ to $2$-cochains of $\mathcal X$ (Proposition \ref{ofeuihwln}).
 
 \par\smallskip
 
 Let now $f:\mathbb Z\to\R$ be Lipschitz.
 
 \par\smallskip
 
 \item The simplicial $2$--cochain $F_f$ on $\calX$ (Definition \ref{definition F_f}) is the one such that, if $\sigma$ is a $2$-simplex in $\calX$ with vertices $(x_0,x_1,x_2)\in X^3$, then
$$
F_f(\sigma) = \varepsilon(x_0,x_1,x_2) \frac{\sum_{i=0}^2 f(\theta(x_i))}{3}\ .  $$

\item Finally, the quasi-cocycle $\alpha_f:X^3\to \mathbb R$ is defined by
$$\alpha_f(x_0,x_1,x_2)=F_f(\phi(x_0,x_1,x_2)).$$ 
\end{itemize}

Proposition \ref{quasi-cocycle:prop} says that $\alpha_f$ is indeed a quasi-cocycle, and that its defect is bounded by a universal constant times the Lipschitz constant of $f$. Proposition \ref{prop:final} says that the coboundary $\delta \alpha_f$ is trivial in bounded cohomology if and only if $f$ is bounded.
%%%%%%%
\subsection*{Open questions and directions for further research}
%%%%%%%
Is quasification indeed essential in order to prove Theorem~\ref{main:thm}? Surprisingly enough, 
it seems that studying
genuine differential forms on hyperbolic manifolds is much harder than working with quasi-cocycles
on discrete models for $F_2$. For example, if $M_0\cong\Sigma\times \R$ is the hyperbolic manifold introduced above, where $\Sigma$ is a punctured torus, integration
over straight simplices induces a map from the space of pointwise bounded differential $3$-forms on $M_0$ to bounded group cochains of degree 3.  Understanding the kernel of this map
is unexpectedly difficult, and this implies that it is not trivial to detect when distinct differential forms represent the same bounded class, i.e.~how much freedom one can enjoy in varying the 
differential representatives of a fixed bounded class. 
We refer the reader to~\cite{BIgeo, Wienhard} for a discussion of this topic.  In~\cite{KimKim} Kim and Kim proved, for example, that if $M$ is a complete, connected, oriented, locally symmetric space of infinite volume, 
then the Cheeger isoperimetric constant of $M$ is positive if and only if the Riemannian volume form on $M$ admits a bounded primitive. They also showed that if $M$ is a complete, connected, oriented, 
$\R$-rank one locally symmetric space of infinite volume with dimension at least 3, then the volume form of $M$ defines a non-trivial bounded cohomology class if and only if the Cheeger constant of $M$ vanishes. 
We pose here the following:

\begin{quest_intro}
Let $n\geq 3$ and
 let $M$ be a hyperbolic $n$-manifold of infinite volume with vanishing Cheeger constant. Is it possible to characterize the space of $n$-forms on $M$ admitting a bounded primitive? For example, is it true that a compactly supported
$n$-form on $M$ admits a bounded primitive?
\end{quest_intro}

This question is tacitly faced in~\cite{Soma1} in the case when $M$ is the cyclic covering of a $3$-manifold fibering over the circle with fiber a \emph{closed} surface. Soma's analysis involves a careful study of the relationship
between the hyperbolic and the singular Sol volume forms supported by such a manifold. Adapting his arguments to the case when the fiber is a punctured surface seems very delicate.

\bigskip

Monod and Shalom showed the importance of bounded cohomology with
coefficients in $\ell^2(\G)$ in the study of rigidity of $\G$~\cite{MonShal0,MonShal}, and
proposed the condition $H^2_b(\G,\ell^2(\G))\neq 0$ 
as a cohomological definition of
negative curvature for groups. More in general, bounded cohomology with
coefficients in $\ell^p(\G)$, $1\leq p<\infty$
has been widely studied as a powerful
tool to prove (super)rigidity results (see e.g.~\cite{Hamen,CFI}). It is still unknown whether $H^3_b(F_2,\ell^2(F_2))$ vanishes or not. We hope that our combinatorial approach to the construction of non-trivial classes
(with trivial real coefficients) could be of use in the context of more general coefficient modules.
%\todo{Aggiungere un paragrafo sul caso generale dell'appendice?}

\subsection*{Plan of the paper}
In Section \ref{background:sec} we recall some basic facts on bounded cohomology, and introduce the various (co)homological complexes we will need in the paper. 
In Section \ref{sec:2} we introduce the seminorm $\|\cdot\|_\bah$ and prove Theorem \ref{FPS:thm} building on results from \cite{FPS}. We also show how Theorem~\ref{generalcase} may be reduced to Theorem~\ref{main:thm}.
Following \cite{GM} and \cite{Franceschini2},
in Section \ref{bicombing:sec} we describe a combinatorial bicombing with good filling properties on a suitably chosen Rips complex  associated to a relatively hyperbolic pair.
%The estimates proved in Section~\ref{bicombing:sec} play a fundamental role in the proof that the construction carried out in the following section
%indeed provides non-trivial quasi-cocycles. 
In Section \ref{sec:4} we construct a family of $3$-dimensional combinatorial volume forms on the free group on two generators, and we prove Theorems \ref{volumeform:thm} and~\ref{main:thm}.
Finally, in the appendix, we discuss applications of our techniques to obtain bounds on the volume of compact hyperbolic manifolds and prove Theorem \ref{thm:Aintro}.
\subsection*{Acknowledgements}
We thank Mladen Bestvina for an important suggestion that made it possible to construct a combinatorial version of Soma's classes, and Ken Bromberg for very useful discussions. Part of this work was carried out during the conference "Ventotene 2015 -- Manifolds and Groups". Beatrice Pozzetti was partially supported by the SNF grant P2EZP2\_159117.
This material is based upon work supported by the National Science
Foundation under Grant No. DMS-1440140 while the fourth author was in
residence at the Mathematical Sciences Research Institute in Berkeley,
California, during the Fall 2016 semester.
The third and fourth author would like to thank the Isaac Newton Institute for Mathematical Sciences, Cambridge, for support and hospitality during the programme ``Non-Positive Curvature Group Actions and Cohomology'' where work on this paper was undertaken. This work was supported by EPSRC grant no EP/K032208/1.

\section{Preliminaries on bounded cohomology}\label{background:sec}

Let $\G$ be a group and let $X$ be a set on which $\G$ acts on the left. 
We set
$$
C^n(\G\actson X)=\{\varphi\colon X^{n+1}\to\R\}\ ,
$$
%and
%$$
%C^n((\G,H)\actson (X,Y))=\{\varphi\in C^n(\G\actson X)\, ,\ \varphi|_Y=0 \}\ ,
%$$
%where we say that a cochain $\varphi\in C^n(\G\actson X)$ vanishes on $Y$, and we write $\varphi|_Y=0$, if $\varphi(y_0,\ldots,y_n)=0$
%for every $(y_0,\ldots,y_n)\in Y^{n+1}$.
and we endow $C^n(\G\actson X)$ with the left $\G$-action defined by
$$
g\cdot\varphi(x_0,\ldots,x_n)=\varphi(g^{-1}x_0,\ldots,g^{-1}x_n)\ .
$$
For every $n\in\mathbb{N}$ we also define the differential 
$\delta\colon\ C^n(\G\actson X)\to C^{n+1}(\G\actson X)$ by setting
$$
\delta\varphi (x_0,\ldots,x_{n+1})=\sum_{i=0}^{n+1} (-1)^i \varphi(x_0,\ldots,\widehat{x}_i,\ldots,x_{n+1})\ ,
$$
and we put on $C^n(\G\actson X)$ the norm defined by
$$
\|\varphi\|_\infty=\sup \{|\varphi(x_0,\ldots,x_n)|\, ,\ (x_0,\ldots,x_{n})\in X^{n+1}\} \in \ [0,+\infty]\ .
$$
We denote by $C^n_b(\G\actson X)\subseteq C^n(\G\actson X)$ the subspace of bounded cochains, and we observe that 
$\|\cdot\|_\infty$ restricts to a finite norm on $C^n_b(\G\actson X)$. %We also set
%$$
%C^n_b((\G,H)\actson (X,Y))=C^n_b(\G\actson X)\cap C^n((\G,H)\actson (X,Y))\ .
%$$

If $V$ is a vector space endowed with a linear $\G$-action, we denote by $V^\G\subset V$ the subspace of elements that are fixed by every element of $\G$. 
The differential defined above commutes with the action of $\G$ and sends
bounded cochains to bounded cochains. Therefore, we can consider the cohomology of the complexes
$C^n(\G\actson X)^\G$ and $C^n_b(\G\actson X)^\G$, which we denote respectively by $H^n(\G\actson X)$ and $H^n_b(\G\actson X)$.   
If $X=\G$, endowed with the left action by translations, one gets back the usual (bounded) cohomology $H^*_{(b)} (\G)$
of $\G$.

%Also observe that $\delta$ preserves relative cochains, i.e.~takes elements in $C^*((\G,H)\actson (X,Y))$ into elements
%in $C^{*+1}((\G,H)\actson (X,Y))$. On the contrary, $\G$ does not act on $C^*((\G,H)\actson (X,Y))$. Nevertheless, with a slight abuse we set
%$$
%C^n((\G,H)\actson (X,Y))^\G=C^n((\G,H)\actson (X,Y))\cap C^n(\G\actson X)^\G\ ,
%$$
%$$
%C^n_b((\G,H)\actson (X,Y))^\G=C^n((\G,H)\actson (X,Y))^\G\cap C^n_b((\G,H)\actson (X,Y))\ ,
%$$
%and we denote by $H^n((\G,H)\actson (X,Y))$ (resp.~$H^n_b((\G,H)\actson (X,Y))$) the cohomology of the complex
%$C^n((\G,H)\actson (X,Y))^\G$ (resp.~$C^n_b((\G,H)\actson (X,Y))^\G$).

% Observe that, if $X=\G$, endowed with the left action, and $Y=\emptyset$, we simply have $H^n_{(b)}((\G,H)\actson (\G,\emptyset))=H^n_{(b)}(\G)$ for any subgroup $H$ of $\G$. On the other
% hand, if $X=\G$ and $Y=H$, then 
% $H^n_{(b)}((\G,H)\actson (\G,H))$ coincides with the relative (bounded) cohomology $ H^n_{(b)}(\G,H)$ of the pair $(\G,H)$, as defined in~\cite{Gromov}
% (see also~\cite{Park,FP}). Therefore, as it is customary, henceforth we will denote the complex 
% $C^n_{(b)}((\G,H)\actson (\G,H))$ by the symbol $C^n_{(b)}(\G,H)$. 

%\todo{Molto ripetitivo. Coordinare meglio con l'intro}

For every basepoint $x\in X$, we consider the $\G$-equivariant chain map
$$
%w^*_{y_0}\colon C^*_{b}((\G,H)\actson (X,Y))\to C^*_b(\G,H)\ ,
w^*_{x}\colon C^*_{b}(\G\actson X)\to C^*_b(\G)\ ,
$$
$$
w^n_{x}(\varphi)(g_0,\ldots,g_n)=\varphi(g_0x,\ldots,g_nx)\ .
$$
 With a slight abuse, we denote by $w^n_x$ also the induced map $w^n_x\colon H^n_b(\G\actson X)\to H^n_b(\G)$
 on bounded cohomology.

 \begin{lemma}\label{isometric:isom}
Suppose that the action of $\G$ on $X$ is free. Then for every $x\in  X$ the map $w^n_x:H^n_b(\G\actson X)\to H^n_b(\G)$ is a natural isometric isomorphism.
\end{lemma}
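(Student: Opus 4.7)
The plan is to recognize $(C^*_b(\G\actson X),\delta)$ as a strong relatively injective resolution of the trivial Banach $\G$-module $\R$ in the sense of Ivanov, and then invoke the fundamental lemma of homological algebra together with Ivanov's theorem that any two such resolutions compute $H^*_b(\G)$ canonically and isometrically.

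First I would check that the augmented complex
\[
0 \longrightarrow \R \stackrel{\varepsilon}{\longrightarrow} C^0_b(\G\actson X) \stackrel{\delta}{\longrightarrow} C^1_b(\G\actson X) \stackrel{\delta}{\longrightarrow} \cdots
\]
(where $\varepsilon$ sends a real number to the corresponding constant function) is \emph{strong}. Fixing any basepoint $y_0\in X$, the contracting homotopy
\[
K^n(\varphi)(y_1,\ldots,y_n) = \varphi(y_0,y_1,\ldots,y_n)
\]
satisfies $K^{n+1}\delta + \delta K^n = \mathrm{id}$ and $\|K^n\|\le 1$, so the augmentation admits a norm-one chain contraction as required.

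Next I would verify that each $C^n_b(\G\actson X)$ is a \emph{relatively injective} Banach $\G$-module. Here the hypothesis that the action on $X$ is free is essential: since freeness of $\G\actson X$ implies freeness of the diagonal action on $X^{n+1}$, one can pick a set $T_n$ of representatives for $\G\backslash X^{n+1}$ and obtain a $\G$-equivariant bijection $X^{n+1}\cong \G\times T_n$ (with $\G$ acting on itself by left translations and trivially on $T_n$). This yields a $\G$-equivariant isometric identification
\[
C^n_b(\G\actson X) \ = \ \ell^\infty(X^{n+1}) \ \cong \ \ell^\infty(\G,\ell^\infty(T_n))
\]
of Banach $\G$-modules, and Banach $\G$-modules of the form $\ell^\infty(\G,V)$ are the standard examples of relatively injective ones.

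With these two ingredients, the fundamental lemma ensures that any $\G$-equivariant chain map between $C^*_b(\G\actson X)$ and the standard resolution $C^*_b(\G)$ which extends $\mathrm{id}_\R$ exists, is unique up to $\G$-equivariant chain homotopy, and realizes the canonical isomorphism in bounded cohomology. The map $w^*_x$ is manifestly $\G$-equivariant, commutes with the differentials, and carries the constant function on $X^{n+1}$ to the corresponding constant function on $\G^{n+1}$, so it extends the identity on $\R$ and therefore induces the canonical isomorphism. The fact that this isomorphism is \emph{isometric} is then Ivanov's theorem: the Gromov seminorm on $H^n_b(\G)$ may be computed via the $\ell^\infty$-norm on the $\G$-invariants of any strong relatively injective resolution of $\R$. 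Naturality in $\G$ (with respect to group homomorphisms together with compatible equivariant maps of the underlying sets) is read off directly from the explicit formula defining $w^n_x$. The main subtle step is the passage from freeness to relative injectivity, for which the identification with $\ell^\infty(\G,\ell^\infty(T_n))$ is the key technical input; everything else is a routine application of the Ivanov--Monod framework.
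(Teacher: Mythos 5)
Your verification that the augmented complex is strong, and that freeness yields the equivariant identification $C^n_b(\G\actson X)\cong\ell^\infty(\G,\ell^\infty(T_n))$ and hence relative injectivity, is correct; the fundamental lemma then does give that $w^n_x$ induces the canonical \emph{isomorphism} $H^n_b(\G\actson X)\to H^n_b(\G)$. The gap is in the final step: it is not a theorem of Ivanov (or anyone) that the Gromov seminorm can be computed from an \emph{arbitrary} strong relatively injective resolution. The general machinery only produces a norm non-increasing morphism from a strong resolution \emph{to} the standard resolution, hence only the inequality $\|w^n_x(\alpha)\|_\infty\leq\|\alpha\|_\infty$ --- which here is obvious anyway, since $w^n_x$ is a restriction at the cochain level. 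The reverse inequality does not follow from relative injectivity alone, because the extensions it provides carry no norm control (a priori the norms can grow with the degree). Whether a generic strong relatively injective resolution realizes the canonical seminorm is precisely the subtle point that Monod's theory of \emph{amenable} actions is designed to handle, and this is why the paper proves the lemma in one line by observing that free actions are amenable and invoking \cite[Theorem 7.5.3]{Monod}.

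The gap is easy to close with one more use of freeness. Choose a representative in each $\G$-orbit of $X$, taking $x$ to represent its own orbit; this gives a $\G$-equivariant retraction $r\colon X\to\G$, $r(g\cdot y_0)=g$, and the pullback $r^*\colon C^*_b(\G)\to C^*_b(\G\actson X)$ is an equivariant, norm non-increasing chain map extending $\mathrm{id}_{\R}$ which satisfies $w^*_x\circ r^*=\mathrm{id}$. By the uniqueness part of the fundamental lemma, $r^*$ induces the inverse of $w^n_x$ in cohomology; since both maps are norm non-increasing, $w^n_x$ is isometric. With this addition your argument becomes a complete, self-contained proof of the special case of Monod's theorem that the paper cites.
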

\begin{proof}
Free actions are very special instances of amenable actions, so the conclusion follows e.g.~from \cite[Theorem 7.5.3]{Monod}.
\end{proof}

\subsection{The predual chain complex}
%%%%%%%%%%%%%%%%
In order to show that the cocycles we are going to construct are non-trivial, we will need to evaluate them on appropriate chains. 
Let us fix an action of a group $\G$ on a set $X$ as in the previous section. %, and let us denote by $Y$ an $H$-invariant set, where 
%$H$ is a subgroup of $\G$.
For every $n\geq 0$ we denote by $C_n(X)$ the real vector space 
with basis $X^{n+1}$. Elements of $X^{n+1}$ will be often called  $n$-simplices, 
since they are the $n$-simplices of the full simplicial complex with vertices in $X$.
As usual, we say that
 an $n$-simplex  is supported on a subset $S\subseteq X$ if
all its vertices lie in $S$, and the subspace of $C_n(X)$
generated  by simplices supported on $S$ is denoted by $C_n(S)$.
We also endow $C_n(X)$ with the $\ell^1$-norm defined by
$$
\left\|\, \sum_{\overline{x}\in X^{n+1}} a_{\overline{x}} \overline{x}\, \right\|_1=\sum_{\overline{x}\in X^{n+1}} |a_{\overline{x}}|\ .
$$
If $\overline{x}=(x_0,\ldots,x_n)\in C_n(X)$, we denote by $\partial_j \overline{x}=(x_0,\ldots,\widehat{x_j},\ldots,x_n)\in
C_{n-1}(X)$ the $j$-th face of $\overline{x}$, and we set $\partial \overline{x}=\sum_{j=0}^n (-1)^j \partial_j\overline{x}$.
Observe that it readily follows from the definitions that the diagonal $\G$-action on $X^{n+1}$ induces an isometric $\G$-action on $C_n(X)$. 
We denote by $C_n(\G\actson X)$ the normed space $C_n(X)$ equipped with this action. %For any subset $Y$ of $X$, we denote by  $\overline{C_n(Y)}$ the direct sum

The dual notion to cochain invariance is chain coinvariance.  %Moreover, such an action leaves $C_n(Y)$ invariant,
%thus descending to an action on $C_n((\G,H)\actson (X,Y))$. 
We define the space of \emph{coinvariants} of $C_n(\G\actson X)$
% $C_n((\G,H)\actson (X,Y))$ 
as the quotient space
$$
%C_n((\G,H)\actson (X,Y))_{\G}=C_n((\G,H)\actson (X,Y))\big/ W\ ,
C_n(\G\actson X)_{\G}=C_n(\G\actson X)\big/ W
$$ 
where $W$ is the subspace of %$C_n((\G,H)\actson (X,Y))$ 
$C_n(\G\actson X)$
spanned by the elements of the form $g\cdot c - c$, as $c$ varies in %$C_n((\G,H)\actson (X,Y))$
$C_n(\G\actson X)$ and $g$ varies in $\G$.
We endow $C_n(\G\actson X)_{\G}$
% $C_n((\G,H)\actson (X,Y))_\G$ 
with the quotient seminorm (which  is a norm).
Since the $\G$-action on %$C_n((\G,H)\actson (X,Y))$ 
$C_n(\G\actson X)$ commutes with the boundary operator, %$C_*(X,Y)_\G$ 
$C_*(\G\actson X)_\G$ is naturally a chain complex, whose homology will be denoted by
$$
%H_*((\G,H)\actson (X,Y))\ .
H_*(\G\actson X)\ .
$$
The $\ell^1$-norm on 
%Ho sostituito (\G,H)\actson (X,Y) con {\G\actson X}
$C_n({\G\actson X})_\G$
induces a seminorm on $H_*({\G\actson X})$, which will still be denoted by $\|\cdot\|_1$.

Since invariant cochains vanish on the subspace $W$ previously defined, evaluation of cochains on chains induces a pairing
$$
\langle\cdot,\cdot\rangle \colon C^n({\G\actson X})^\G\times C_n({\G\actson X})_\G\to\R\ ,
$$
which in turn induces pairings
\begin{align*}
\langle\cdot,\cdot\rangle &\colon H^n({\G\actson X})\times H_n({\G\actson X})\to\R\ ,\\
\langle\cdot,\cdot\rangle &\colon H_b^n({\G\actson X})\times H_n({\G\actson X})\to\R\ .
\end{align*}
It readily follows from the definitions that
$$
\langle \alpha,\beta\rangle \leq \|\alpha\|_\infty \cdot \|\beta\|_1
$$
for every $\alpha\in H^n_b(\G\actson X)$, $\beta\in H_n(\G\actson X)$.
As a first application of the pairing between homology and cohomology, we show that bounded coclasses with vanishing seminorm are exact:
\begin{lemma}\label{lem:Nexact}
We have $N^n(\G)\subseteq EH^n_b(\G)$.
\end{lemma}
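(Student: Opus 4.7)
The plan is to combine the pairing between bounded cohomology and ordinary homology with the universal coefficient theorem over $\R$. First I would specialize the set-up of this subsection to $X=\G$, acted on by left translations. The complex $C_*(\G\actson\G)_\G$ then computes ordinary group homology $H_*(\G;\R)$, and the pairing recalled above yields
$$
\langle\cdot,\cdot\rangle\colon H^n_b(\G)\times H_n(\G)\to\R,\qquad |\langle\alpha,\beta\rangle|\leq \|\alpha\|_\infty\cdot\|\beta\|_1\ ,
$$
which factors through the comparison map $c^n\colon H^n_b(\G)\to H^n(\G)$ and the standard evaluation pairing between ordinary cohomology and homology.

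Next I would use the key observation that $C_n(\G\actson\G)_\G$ consists, by definition, of genuinely finite $\R$-linear combinations of simplices, so that every cycle has finite $\ell^1$-norm and hence every $\beta\in H_n(\G)$ satisfies $\|\beta\|_1<\infty$. Given $\alpha\in N^n(\G)$, I can then pick representative cocycles of $\alpha$ of arbitrarily small $\ell^\infty$-norm, and the inequality above forces $\langle\alpha,\beta\rangle=0$ for every $\beta\in H_n(\G)$.

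Finally, since $\R$ is a field, the universal coefficient theorem identifies $H^n(\G;\R)$ with $\mathrm{Hom}_\R(H_n(\G;\R),\R)$ via the evaluation pairing. Combining this with the factorization of the first step, the vanishing of $\langle\alpha,\cdot\rangle$ on $H_n(\G)$ forces $c^n(\alpha)$ to be the zero functional on $H_n(\G;\R)$, and hence $c^n(\alpha)=0$ in $H^n(\G)$, giving $\alpha\in EH^n_b(\G)$, as claimed. There is no real technical obstacle here: the content of the argument is simply that, although representing $\alpha$ by bounded cocycles admitting \emph{bounded} primitives may well be impossible (this is precisely what makes $N^n$ potentially non-trivial to begin with), the pairing with any \emph{single} ordinary homology class only sees finitely many simplices at a time, and so can be driven to zero by shrinking the $\ell^\infty$-norm of the representative cocycle.
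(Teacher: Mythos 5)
Your proof is correct and is essentially the paper's own argument: both use the duality $H^n(\G)\cong \mathrm{Hom}_\R(H_n(\G),\R)$ from the universal coefficient theorem together with the bound $|\langle\alpha,\beta\rangle|\leq\|\alpha\|_\infty\|\beta\|_1$ to conclude that a class of vanishing seminorm pairs trivially with all of $H_n(\G)$ and hence lies in the kernel of the comparison map. The only difference is expository (you spell out the finiteness of $\|\beta\|_1$ and phrase the vanishing via representatives of arbitrarily small norm), not mathematical.
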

\begin{proof}
By the Universal Coefficient Theorem, the pairing $\langle\cdot,\cdot\rangle \colon H^n(\G)\times H_n(\G)\to\R$ induces an isomorphism
between $H^n(\G)$ and the dual of $H_n(\G)$. Therefore, in order to conclude it is sufficient to observe that, if $c^n\colon H^n_b(\G)\to H^n(\G)$ is the comparison map, then
$$
|\langle c^n(\alpha),\beta\rangle|=
|\langle \alpha,\beta\rangle|\leq \|\alpha\|_\infty \|\beta\|_1= 0
$$
for every $\alpha\in N^n(\G)$, $\beta\in H_n(\G)$. 
\end{proof}

\subsection{Degenerate chains and alternating cochains}
In later computations it will be convenient to  neglect degenerate simplices (i.e.~simplices with non-pairwise distinct vertices).  
To this aim,
let us denote by $\mathfrak{S}_{n+1}$ the group of permutations
of the set $\{0,\ldots,n\}$, and by ${\rm sgn}(\sigma)=\pm 1$
the sign of $\sigma$, 
for every $\sigma\in\mathfrak{S}_{n+1}$.

Then we may define an alternating linear operator
$
\alt_n\colon C_n(\G\actson X)\to C_n(\G\actson X)
$
by setting, for every $\overline{x}=(x_0,\ldots,x_n)\in X^{n+1}$,
$$
\alt_n(\overline{x})=\frac{1}{(n+1)!} \sum_{\sigma\in\mathfrak{S}_{n+1}} {\rm sgn}(\sigma)
(x_{\sigma(0)},\ldots,x_{\sigma(n)})\ .
$$
We say that a chain $c\in C_n(\G\actson X)$ is \emph{degenerate} if $\alt_n(c)=0$, and we denote by $D_n(\G\actson X)$ the subspace of degenerate chains. 
We observe that $D_n(\G\actson X)$ contains (strictly, unless $X$ is a point) the
space spanned by degenerate simplices.

It is immediate to check that $\alt_*$ 
commutes with the boundary operator and with the action of $\G$. Therefore,
it descends to a chain map $\alt_*\colon C_*({\G\actson X})_\G\to C_*({\G\actson X})_\G$, which will still be denoted by $\alt_*$. If $D_n(\G\actson X)_\G$ denotes the image
of $D_n(\G\actson X)$ in $C_n({\G\actson X})_\G$, we then
define
\emph{reduced}
chains by setting
\begin{align*}
C_*({\G\actson X})_{{\red}} & = C_*({\G\actson X})/D_*(\G\actson X), \\
C_*({\G\actson X})_{{\red},\G} & = C_*({\G\actson X})_{\G}/D_*(\G\actson X)_\G\ .
\end{align*}

It is well known that the homology of the complex $C_*({\G\actson X})_{{\red},\G}$,
endowed with the obvious quotient seminorm, is isometrically isomorphic to $H_n({\G\actson X})$: indeed, this easily follows from the fact that
alternation is homotopic to the identity (on any complex where it is defined), and norm non-increasing (see e.g.~\cite[Appendix B]{Fuji_Man}).

Dually, one may define alternating cochains by setting, for every $\varphi\in C^n(\G\actson X)$, 
$$
\alt^n(\varphi)(\overline{x})=\varphi(\alt_n(\overline{x}))
$$
for every $\overline{x}\in X^{n+1}$. The map $\alt^n$ %preserves relative cochains and 
commutes with the differential and with the action of $\G$, thus defining
a norm non-increasing chain self-map of the complex $C^*({\G\actson X})$. We denote by
$$
C^*_{\alt} ({\G\actson X})=\alt^*(C^* ({\G\actson X}))
$$
the space of alternating cochains, and we set
$$
C^*_{b,\alt} ({\G\actson X})=C^*_{\alt} ({\G\actson X})\cap C^*_b ({\G\actson X})\ .
$$

Again, the inclusion of alternating cochains into generic cochains induces an isometric isomorphism between the cohomology of the complex 
$C^*_{b,\alt} ({\G\actson X})$ and $H^*_b({\G\actson X})$. Moreover, 
since alternating cochains vanish on degenerate chains, 
there is a well-defined pairing
$$
\langle\cdot,\cdot\rangle \colon C^n_{b,\alt}({\G\actson X})^\G\times C_n({\G\actson X})_{{\red},\G} \to\R\ ,
$$
which, under the identifications previously mentioned, induces the pairing between
$H^n_{b}({\G\actson X})$ and $H_n({\G\actson X})$ introduced above. 

We will denote by $\QCa^n({\G\actson X})$ the space of alternating quasi-cocycles on $X$, i.e.~ the set
of alternating cochains with bounded differential.

\subsection{Simplicial (co)chains}
In this paper we will study the cochain modules $C^*(\G\actson X)$, $C^*_b(\G\actson X)$ in the case when $X$ is the set of vertices of a suitably augmented
Cayley graph of  $\G$ (see Section~\ref{bicombing:sec}). 
A key step in our arguments will be based on 
the fact that $\G$ is relatively hyperbolic, which implies that $\G$ satisfies (relative) isoperimetric inequalities in every degree.
In order to deal with higher dimensional fillings, it will be convenient, rather than considering cochains in $C^n(\G\actson X)$ (or $C^n_b(\G\actson X)$), to consider simplicial cochains
on suitably defined simplicial complexes related to $X$ (like the augmented Cayley graph having $X$ as set of vertices, or some Rips complex over $X$).

For every simplicial complex $Y$, we denote by $(C_*^\Delta(Y),\partial)$ the chain complex of real simplicial chains on $Y$, endowed with the $\ell^1$-norm $\|\cdot \|_1$ such that
$$
\left\|\sum_{i\in I} a_i\sigma_i\right\|_1=\sum_{i\in I} |a_i|
$$
for every reduced sum $\sum_{i\in I} a_i\sigma_i\in C_*^\Delta(Y)$. 
The module $C_n^\Delta(Y)$ is the real vector space with basis $$\{(y_0,\ldots,y_n)\, |\, \{y_0,\ldots,y_n\}\ {\rm is\ a\ simplex\ of}\ Y\}$$
(in particular, every $n$-simplex of $Y$ gives rise to $(n+1)!$ simplices in $C_n^\Delta(Y)$, and to many other degenerate ones
in degree bigger than $n$). As it is customary in the literature, we denote by $[y_0,\ldots,y_n]$ (rather than by $(y_0,\ldots,y_n)$) the elements of the canonical
basis of $C^\Delta_n(Y)$.
If $[y_0,\ldots,y_n]$ is any such element, then we set $\supp([y_0,\ldots,y_n])=\{y_0,\ldots,y_n\}\subseteq  Y^{(0)}$, 
and if $c=\sum_{i\in I} a_i\sigma_i$ is a chain in reduced form, then we set
$\supp(c)=\bigcup_{i\in I}\supp(\sigma_i)$. 

Just as above, we define 
a  chain map 
$
\alt_n\colon C^\Delta_n(Y)\to C_n^\Delta(Y)
$
by setting
$$
\alt_n([y_0,\ldots,y_n])=\frac{1}{(n+1)!} \sum_{\sigma\in\mathfrak{S}_{n+1}} {\rm sgn}(\sigma)
([y_{\sigma(0)},\ldots,y_{\sigma(n)}])
$$
for every $[y_0,\ldots,y_n]\in C_n^\Delta(Y)$.

A chain $c\in C_n(Y)$ is \emph{degenerate} if $\alt_n(c)=0$, and one may define the complex $ C_*^\Delta(Y)_{{\red}} $ of reduced simplicial chains as the quotient
of $C_*(Y)$ by the subspace of degenerate chains. We will simply denote by $[y_0,\ldots,y_n]$ (and call it a ``simplex'') also the class of 
$[y_0,\ldots,y_n]$ in $C_n^{\Delta}(Y)_\red$, so that, for example, we will be allowed to write that $[y_0,y_1]=-[y_1,y_0]$ 
in $C_1^{\Delta}(Y)_\red$. If one fixes a total ordering $<$ on the set of vertices of $Y$, then a basis of 
$C_n^\Delta(Y)_\red$ is given by the classes of the non-degenerate elements $[y_0,\ldots,y_n]\in C_n^\Delta(Y)$ such that $y_0<\ldots<y_n$.
We say that a simplex $\{y_0,\ldots,y_n\}$ \emph{appears} in  
a reduced chain $\overline{c}\in C_n^{\Delta}(Y)_\red$ 
if, when writing $\overline c$ as a linear combination of the elements of the above basis,
the coefficient of the  unique element corresponding to $\{y_0,\ldots,y_n\}$ is not null.
We then define the support $\supp(\overline{c})$ of $\overline{c}$ as the union of the sets of vertices 
of all the simplices appearing in $\overline{c}$. Equivalently, $\supp(\overline{c})$ is the smallest possible support
of any chain $c\in C_n^\Delta(Y)$ projecting to $\overline{c}$.

Also observe that the $\ell_1$-norm on $C_n^\Delta(Y)$ induces an $\ell^1$-norm on $C_n^{\Delta}(Y)_\red$, that will still be denoted by $\|\cdot \|_1$.

If $\G$ acts on $Y$ via simplicial automorphisms, then we denote by $C_*^\Delta(Y)_\G$ the complex of coinvariants of $C_*^\Delta(Y)$. Just as before,
$C_*^\Delta(Y)_\G$ is the quotient of $C_*^\Delta(Y)$ by the submodule generated by the chains of the form $(c-g\cdot c)$, $c\in C_*^\Delta(Y)$, $g\in\G$. 
The chain map $\alt_*$ %preserves $C_n(S)$ for any subset $S$ of $X$, and 
commutes with  the action of $\G$, thus descending to a map $\alt_*\colon C_*^\Delta(Y)_\G\to C_*^\Delta(Y)_\G$, which will still be denoted by $\alt_*$. We will
denote by $C_*^{\Delta}(Y)_\red$ (resp.~$C_*^{\Delta}(Y)_{\red,\G}$) the complex of reduced (resp. reduced and coinvariant) cochains, i.e.~the quotient of $C_*^\Delta(Y)$ 
(resp.~of $C_*^{\Delta}(Y)_\G$)
by the kernel of the alternation map.

It is well known that, if $Y$ is contractible and $\G$ acts freely on $Y$, the homology of the complexes $C_*^\Delta(Y)_\G$, $C_*^{\Delta}(Y)_{\red,\G}$ is (not isometrically!) isomorphic to the homology of $\G$.
One may wonder whether also the computation of bounded cohomology could take place in the context of simplicial cochains. However, this is almost never the case:
for example, if $Y/\G$ is compact, then 
every invariant simplicial cochain on $Y$ is bounded, while there may well
exist cohomology classes in $H^n(\G)$ which do not admit any bounded representative. 

\section{Controlled extensions of quasi-cocycles}\label{sec:2}
This section is devoted to the description of some elementary properties of the norm 
$\|\cdot\|_\bah$ defined in the introduction, and to 
the proof of
Theorem \ref{FPS:thm}. We fix a group $\G$, and we work with the standard resolution $(C^n_b(\G),\delta)$ computing $H^n_b(\G)$.

\subsection{The seminorm $\|\cdot\|_\bah$}
 Recall from the introduction that,
for every  class $\alpha\in EH^{n+1}_b(\G)$, we have set
$$
\|\alpha\|_\bah=\sup \{\|\alpha\|_S,\, S\subseteq \G,\, S \ \textrm{finite}\}\ \in \ [0,+\infty]\ ,
$$
where
$$
\|\alpha\|_S=\inf \{\|\delta\varphi\|_\infty\, |\, \varphi\in C^n(\G)^\G,\, [\delta\varphi]=\alpha,\, \varphi|_S=0\}\ .
$$

In~\cite[Section 5.34]{Gromovbook}, Gromov  called \emph{functorial} any seminorm (on singular homology of topological spaces) 
with respect to which every continuous map induces a norm non-increasing morphism. 
The following result ensures that $\|\cdot\|_\bah$ satisfies the obvious analogous of functoriality for seminorms on bounded cohomology of groups:

\begin{lemma}\label{functorial}
 Let $\psi\colon \G\to \G'$ be a homomorphism. Then the induced map
 $$
 \psi^*\colon (EH^n_b(\G'),\|\cdot\|_\bah)\to  (EH^n_b(\G),\|\cdot\|_\bah)
 $$
 is norm non-increasing for every $n\in\mathbb{N}$.
\end{lemma}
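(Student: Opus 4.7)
The plan is to unwind the definitions and exploit the fact that the pullback of cochains does not increase the sup norm while preserving vanishing on images of subsets. Let $\psi^\sharp\colon C^*(\G')\to C^*(\G)$ denote the natural pullback of homogeneous cochains, defined by
$$\psi^\sharp\varphi(g_0,\ldots,g_{n-1})=\varphi(\psi(g_0),\ldots,\psi(g_{n-1})).$$
This is a norm non-increasing chain map that commutes with coboundary and sends $\G'$-invariants to $\G$-invariants, and the map $\psi^*$ on bounded cohomology is the one induced on cohomology.

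Fix $\alpha\in EH^n_b(\G')$. It suffices to prove that $\|\psi^*\alpha\|_T\leq \|\alpha\|_\bah$ for every finite subset $T\subseteq \G$, since then taking the supremum over $T$ yields the claim. Given such a $T$, set $S=\psi(T)\subseteq\G'$, which is finite. Fix $\varepsilon>0$ and, by definition of $\|\alpha\|_S$, pick $\varphi\in C^{n-1}(\G')^{\G'}$ with $[\delta\varphi]=\alpha$, $\varphi|_{S^n}=0$, and $\|\delta\varphi\|_\infty\leq \|\alpha\|_S+\varepsilon\leq \|\alpha\|_\bah+\varepsilon$.

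Then $\psi^\sharp\varphi\in C^{n-1}(\G)^\G$ represents $\psi^*\alpha$ in $EH^n_b(\G)$, and $\delta(\psi^\sharp\varphi)=\psi^\sharp(\delta\varphi)$, so $\|\delta(\psi^\sharp\varphi)\|_\infty\leq \|\delta\varphi\|_\infty\leq \|\alpha\|_\bah+\varepsilon$. Moreover, for every $(g_0,\ldots,g_{n-1})\in T^n$ we have $(\psi(g_0),\ldots,\psi(g_{n-1}))\in S^n$, hence $\psi^\sharp\varphi$ vanishes on $T^n$. Thus $\|\psi^*\alpha\|_T\leq \|\alpha\|_\bah+\varepsilon$, and letting $\varepsilon\to 0$ gives $\|\psi^*\alpha\|_T\leq \|\alpha\|_\bah$, as required.

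There is no genuine obstacle here; the whole argument is a bookkeeping check. The only point worth highlighting is the choice $S=\psi(T)$ rather than, say, $S=\psi(T)\cup\{e\}$ or some other enlargement: it is precisely the fact that $\psi$ maps $T$ into $S$ (and not the reverse inclusion) that makes the vanishing condition $\varphi|_{S^n}=0$ pull back to $\psi^\sharp\varphi|_{T^n}=0$. All other steps are formal.
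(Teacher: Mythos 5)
Your proof is correct and follows essentially the same route as the paper's: given a finite $T\subseteq\G$, pull back a near-optimal representative of $\alpha$ that vanishes on $\psi(T)$, using that the pullback is norm non-increasing, commutes with $\delta$, and preserves the vanishing condition. The only cosmetic difference is that you go through $\|\alpha\|_S$ explicitly before bounding by $\|\alpha\|_\bah$, which the paper does in one step.
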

\begin{proof}
 Take $\alpha\in EH^n_b(\G')$, and let $\varepsilon>0$ be given. Let also $S\subseteq \G$ be an arbitrary finite set. Of course the set
 $S'=\psi(S)$ is finite, so we can find an element $\varphi'\in C^{n-1}(\G')^{\G'}$ such that $[\delta \varphi']=\alpha$ in $EH^n_b(\G')$,
 $\varphi'|_{S'}=0$, and $\|\delta\varphi'\|_\infty\leq \|\alpha\|_\bah +\varepsilon$. Let now $\varphi=\psi^*\varphi'$. By construction we have
 $\varphi|_S=0$, $[\delta\varphi]=[\delta \psi^*\varphi']=[\psi^*\delta\varphi']=\psi^*(\alpha)$, and
 $\|\delta\varphi\|_\infty\leq \|\delta\varphi'\|_\infty\leq \|\alpha\|_\bah+\varepsilon$. Hence $\|\psi^*(\alpha)\|_S\leq \|\alpha\|_\bah+\varepsilon$.
 Due to arbitrariness of $S$ we then have $\|\psi^*(\alpha)\|_\bah\leq \|\alpha\|_\bah+\varepsilon$, whence the conclusion since $\varepsilon$ is arbitrary.
 \end{proof}

\begin{cor}\label{amenable:inj}
 Let $\psi\colon \G\to \G'$ be a surjective homomorphism with amenable kernel. Then 
 $
 \psi^*\colon H^n_b(\G') \to  H^n_b(\G)
 $
 induces an injection
 $$
 N_0^n(\G')\hookrightarrow N_0^n(\G)
 $$
for every $n\in\mathbb{N}$.
\end{cor}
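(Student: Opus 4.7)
The plan is a direct combination of Lemma~\ref{functorial} with a classical fact about bounded cohomology of amenable extensions. First I would check that $\psi^*$ carries $N_0^n(\G')$ into $N_0^n(\G)$: given $\alpha\in N_0^n(\G')$, exactness is preserved by any pullback (an exact cocycle maps to an exact cocycle), so $\psi^*(\alpha)\in EH^n_b(\G)$; and by Lemma~\ref{functorial}
$$
\|\psi^*(\alpha)\|_\bah \;\leq\; \|\alpha\|_\bah \;=\; 0,
$$
so $\psi^*(\alpha)\in N_0^n(\G)$. Thus the restriction is well defined.

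For injectivity, I would invoke the classical theorem (due to Gromov and Ivanov, see e.g.\ \cite[Chapter~8]{Monod}) that if $\psi\colon \G\to\G'$ is a surjection with amenable kernel, then the induced map
$$
\psi^*\colon H^n_b(\G')\longrightarrow H^n_b(\G)
$$
is an isometric isomorphism with respect to the Gromov seminorm; in particular it is injective on all of $H^n_b(\G')$. Restricting this injection to $N_0^n(\G')\subseteq EH^n_b(\G')\subseteq H^n_b(\G')$ yields the desired injection $N_0^n(\G')\hookrightarrow N_0^n(\G)$.

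There is no real obstacle here: the corollary is essentially a formal packaging of Lemma~\ref{functorial} together with the well-known injectivity of the pullback under amenable extensions. The only mildly delicate point is to observe that one genuinely needs both facts: functoriality of $\|\cdot\|_\bah$ alone only gives that $\psi^*(N_0^n(\G'))\subseteq N_0^n(\G)$ set-theoretically, while injectivity of the restriction requires the amenable-kernel statement for $H^n_b$ itself. Once both are in place, the proof is a single line.
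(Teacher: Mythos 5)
Your proof is correct and follows exactly the paper's argument: the paper likewise combines the well-known fact that an epimorphism with amenable kernel induces an isomorphism in bounded cohomology with Lemma~\ref{functorial} to get both that $\psi^*$ lands in $N_0^n(\G)$ and that it is injective. Nothing further is needed.
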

\begin{proof}
 It is well known that an epimorphism with amenable kernel induces an isomorphism in bounded cohomology (see e.g.~\cite{Gromov,Ivanov}), so the conclusion follows from Lemma~\ref{functorial}.
\end{proof}

\begin{quest}
Let $\psi\colon \G\to \G'$ be a surjective homomorphism with amenable kernel.
 Then the isomorphism $\psi^*$ induced by $\psi$ on bounded cohomology is isometric with respect to Gromov's seminorm (see e.g.~\cite{Gromov,Ivanov}).
 Is it true that $\psi^*$ also preserves the seminorm $\|\cdot\|_\bah$ on exact bounded cohomology? Or could the seminorm $\|\cdot\|_\bah$ be used to distinguish the (exact) bounded cohomology
 of $\G$ from the (exact) bounded cohomology of $\G'$ (as seminormed spaces)?
\end{quest}

The norm $\|\cdot\|_\bah$ is only interesting in degrees strictly bigger than 2:

\begin{lemma}\label{degree2:infinite}
 For each non-zero $\alpha$ in $H^2_b(\G)$, $\|\alpha\|_\bah=\infty$.
\end{lemma}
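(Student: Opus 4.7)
The plan is to first reduce to the non-trivial case $\alpha \in EH^2_b(\G) \setminus \{0\}$: if $\alpha$ is not exact then the defining infimum for $\|\alpha\|_S$ is over an empty set for every finite $S$, so $\|\alpha\|_\bah = +\infty$ by convention. Assume henceforth that $\alpha$ is exact. Classical theory of bounded cohomology in degree two identifies $EH^2_b(\G)$ with the quotient of homogeneous quasi-morphisms $\hat f \colon \G \to \R$ modulo homomorphisms, via the map sending $\hat f$ to the class of $\delta \varphi_{\hat f}$, where $\varphi_{\hat f}(g_0, g_1) = \hat f(g_0^{-1} g_1)$. In particular $\alpha$ is represented by some $\varphi_{\hat f}$ with $\hat f$ a homogeneous quasi-morphism that is not a homomorphism.

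The key ingredient, which I expect to be the main obstacle of the proof, is the claim that such an $\hat f$ must take a non-zero value on some $g \in [\G, \G]$. I would prove this by contradiction: if $\hat f$ vanishes on $[\G, \G]$, then for all $a, b \in \G$ and $n \geq 1$ the element $v_n = (ab)^{-n} a^n b^n$ lies in $[\G, \G]$ (it is trivial in the abelianization), so $\hat f(v_n) = 0$. Writing $a^n b^n = (ab)^n \cdot v_n$ and using that $\hat f$ is a quasi-morphism with defect $D$, one gets $\hat f(a^n b^n) = n \hat f(ab) + O(D)$; combined with the usual estimate $\hat f(a^n b^n) = n \hat f(a) + n \hat f(b) + O(D)$, dividing by $n$ and letting $n \to \infty$ yields $\hat f(ab) = \hat f(a) + \hat f(b)$, contradicting that $\hat f$ is not a homomorphism.

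Granted such a $g$ with $c := \hat f(g) \neq 0$ and $g \in [\G, \G]$, set $S_n = \{g^k : |k| \leq n\}$, so that $S_n^{-1} S_n = \{g^k : |k| \leq 2n\}$. Let $\varphi'$ be any invariant $1$-cochain representing $\alpha$ with $\varphi'|_{S_n^2} = 0$, and define $f' \colon \G \to \R$ by $f'(g) = \varphi'(e, g)$, so that $f'$ is a quasi-morphism with $\|\delta \varphi'\|_\infty = \Def(f')$, and moreover $f'(g^k) = 0$ for all $|k| \leq 2n$. The homogenization $\hat f'$ of $f'$ also represents $\alpha$, so $\hat f' - \hat f = h$ for some homomorphism $h \colon \G \to \R$, and $h(g) = 0$ since $g \in [\G, \G]$; consequently $\hat f'(g^{2n}) = 2nc$. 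Combined with the standard bound $|f'(\cdot) - \hat f'(\cdot)| \leq \Def(f')$ applied at $g^{2n}$, we obtain $2n|c| \leq \Def(f') = \|\delta \varphi'\|_\infty$, hence $\|\alpha\|_{S_n} \geq 2n|c|$. Letting $n \to \infty$ gives $\|\alpha\|_\bah = \infty$, as required.
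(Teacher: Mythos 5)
Your proof is correct, but it follows a genuinely different route from the one in the paper. The paper's argument is dual in nature: it invokes the Matsumoto--Morita identification of $H^2_b(\G)$ with the dual of $\ell^1$-homology $H_2^{\ell^1}(\G)$, then shows that a class with $\|\alpha\|_\bah=M<\infty$ pairs trivially with every $\ell^1$-cycle (split the cycle into a finitely supported piece, killed by choosing a representative vanishing there, plus an $\ell^1$-small tail, controlled by $M+1$), whence $\alpha=0$. You instead work entirely on the cochain side, using the other degree-2-specific tool: the description of $EH^2_b(\G)$ by homogeneous quasimorphisms modulo homomorphisms. Your key lemma --- that a homogeneous quasimorphism vanishing on $[\G,\G]$ is a homomorphism, proved via the elements $(ab)^{-n}a^nb^n$ --- is a standard Bavard-type fact, and the rest of your argument (forcing any competitor $\varphi'$ for $\|\alpha\|_{S_n}$ to have defect at least $2n|c|$ because its homogenization must agree with $\hat f$ on the commutator subgroup) is sound; I checked in particular that invariance of $\varphi'$ gives $f'(g^k)=0$ for $|k|\le 2n$ and that $\|\delta\varphi'\|_\infty$ equals the quasimorphism defect of $f'$. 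What each approach buys: the paper's proof is shorter and transparently explains the phenomenon as a failure of excision against $\ell^1$-chains, at the cost of importing the Banach-space duality of Matsumoto--Morita; yours is more elementary and constructive, exhibiting explicit finite sets $S_n$ on which the norm blows up at a linear rate, at the cost of the (also degree-2-specific) quasimorphism machinery. Both arguments are intrinsically limited to degree $2$.
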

\begin{proof}
Let $C_n^{\ell^1}(\G)_\G$ be the metric completion of $C_n(\G)_\G$ with respect to the $\ell^1$-norm. Being bounded, the differential $\partial_n\colon C_n(\G)_\G\to C_{n-1}(\G)_\G$ extends
to $\ell^1$-chains, thus defining a complex whose homology is denoted by $H_*^{\ell_1}(\G)$. The pairing between
$H^n_b(\G)$ and $H_n(\G)$ extends to a pairing between $H^n_b(\G)$ and $H_n^{\ell_1}(\G)$. By~\cite[Theorem 2.3 and Corollary 2.7]{Matsu-Mor}, 
this pairing induces an isomorphism between $H^2_b(\G)$ and the dual of $H_2^{\ell_1}(\G)$. 
Therefore, it is sufficient to show that if 
 $\alpha\in H^2_b(\G)$ is any element with $\|\alpha\|_\bah=M<\infty$, then $\alpha$ vanishes on every class in $H_2^{\ell_1}(\G)$. 
 So let $\beta\in C_2^{\ell_1}(\G)$ be an $\ell^1$-cycle, and let $\varepsilon$ be given. We can find a decomposition $\beta= \beta_1+\beta_2$ 
 such that $\|\beta_2\|_1<\varepsilon$ and $\beta_1$ is supported on a finite set $S\subseteq \G$. Since $\|\alpha\|_\bah\geq \|\alpha\|_S$, 
 we can find a representative $a$ of $\alpha$ vanishing on $S$ with Gromov norm smaller than $M+1$. This implies
 $$\langle\alpha,[\beta]\rangle=\langle a,\beta_1+\beta_2\rangle=\langle a,\beta_2\rangle\leq\varepsilon(M+1)\ .$$
 By the arbitrariness of $\varepsilon$, this implies that $\langle\alpha,[\beta]\rangle=0$, as desired.
\end{proof}

The rest of the section is devoted to the proof of Theorem~\ref{FPS:thm}.
We first 
describe  an easy characterization of the seminorm $\|\cdot\|_\bah$ defined in the introduction.

% 
% \begin{lemma}\label{fattifacili}
% We have
% $$
% N_0^n(\G)\subseteq N(\G)\subseteq EH^n_b(\G)\ .
% $$
% Moreover, if $\|\alpha\|_\bah<+\infty$, then $\alpha\in EH^n_b(\G)$.
% \end{lemma}
% \begin{proof}
% The inclusion $N^n_0(\G)\subseteq N(\G)$ follows from the obvious inequality $\|\cdot\|_\bah\geq \|\cdot\|_\infty$. When $X=\G$, endowed with the left action by translations, and
% $H=\{1\}$, $Y=\emptyset$, the pairings described in the previous subsection reduce to the well-known pairings between (bounded) group cohomology and group homology.
% In particular, if $\varphi\in N^n(\G)$, then $\langle \varphi, \alpha\rangle\leq \|\varphi\|_\infty \|\alpha\|_1=0$ for every $\alpha \in H_n(\G)$. As a consequence of the Universal Coefficient Theorem, we get that the image of $\varphi$ via the comparison map is zero, i.e.~that $\varphi\in EH^n_b(\G)$. Finally, suppose that $\|\varphi\|_\bah<+\infty$. Let $\alpha\in H_n(\G)$
% be arbitrary, and fix a representative $c\in C_n(\G)$ of $\alpha$. Of course only a finite number of elements $S$ in $\G$ occur in the description of $c$ as a linear combination of simplices.
% Now, since $\|\varphi\|_\bah<+\infty$, there exists a representative of $\varphi$ in $C^n_b(\G)$ that vanishes on $S^{n+1}$, so that $\langle\varphi,\alpha\rangle=0$. By the arbitrariness
% of $\alpha$ and the Universal Coefficient Theorem, we may again conclude that $\varphi\in EH^n_b(\G)$.  
%  \end{proof}

\begin{defn}
 An \emph{exhaustion} of $\G$ is a sequence $(S_i)_{i\in\mathbb{N}}$ of finite subsets $S_i\subseteq\G$ such that $S_i\subseteq S_{i+1}$ for every $i\in\mathbb{N}$
and $\bigcup_{i\in\mathbb{N}} S_i=\G$.
\end{defn}
The following criterion is easily verified and very useful in the applications:
\begin{lemma}\label{elementary:seminorm}
 Let $\alpha\in EH^{n+1}_b(\G)$, and let  
 $(S_i)_{i\in\mathbb{N}}$ be a fixed exhaustion of $\G$. Then for every sequence of elements $\varphi_i\in \QC^n(\G)^\G$, $i\in\mathbb{N}$, such that
\begin{enumerate}
  \item
$[\delta \varphi_i]=\alpha$  for every $i\in\mathbb{N}$,
\item
$\varphi_i|_{S_i}=0$ for every $i\in\mathbb{N}$,
\end{enumerate}
we have
$$\|\alpha\|_\bah\leq \liminf_{i\to \infty} \|\delta\varphi_i\|_\infty \ .$$
 Moreover, one can choose  elements $\varphi_i\in \QC^n(\G)^\G$, $i\in\mathbb{N}$, satisfying conditions~(1) and (2) in such a way that
 $$
 \|\alpha\|_\bah = \lim_{i\to \infty} \|\delta\varphi_i\|_\infty\ . $$
\end{lemma}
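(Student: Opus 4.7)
The plan is to deduce both parts of the lemma from a single underlying fact: the sequence $a_i := \|\alpha\|_{S_i}$ is monotone non-decreasing in $i$ and converges to $\|\alpha\|_\bah$ in $[0,+\infty]$. Before getting there, I would first verify that $a_i<\infty$ for every $i$, since this is what makes the approximation argument below meaningful. Because $\alpha\in EH^{n+1}_b(\G)$ there exists some $\varphi\in C^n(\G)^\G$ representing $\alpha$, i.e.\ with $\delta\varphi$ bounded. The $\G$-orbits on $\G^{n+1}$ hitting the finite set $S_i^{n+1}$ are themselves finite in number (at most $|S_i|^{n+1}$), and $\varphi$ is constant on each of them by invariance, so $\varphi$ is bounded on their union. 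Defining $\psi\in C^n(\G)^\G$ to equal $\varphi$ on these finitely many orbits and to vanish elsewhere produces a $\G$-invariant bounded cochain, so that $\varphi':=\varphi-\psi$ still represents $\alpha$ in $EH^{n+1}_b(\G)$, has bounded differential, and vanishes on $S_i^{n+1}$; this gives $a_i\leq\|\delta\varphi'\|_\infty<\infty$.

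For the inequality $\|\alpha\|_\bah\leq\liminf_i\|\delta\varphi_i\|_\infty$, I would fix any finite $S\subseteq \G$. By the exhaustion property there is $i_0$ with $S\subseteq S_i$ for all $i\geq i_0$, and then the hypothesis $\varphi_i|_{S_i}=0$ forces $\varphi_i|_{S}=0$. Hence each such $\varphi_i$ is a valid competitor in the infimum defining $\|\alpha\|_S$, giving $\|\alpha\|_S\leq\|\delta\varphi_i\|_\infty$. Passing to the $\liminf$ in $i$ and then taking the supremum over all finite $S$ yields the claimed bound.

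For the \emph{moreover} clause I would first check that $\lim_i a_i=\|\alpha\|_\bah$. Monotonicity of $(a_i)$ follows from $S_i\subseteq S_{i+1}$ (more constraints shrink the feasible set of the infimum), and the inequality $a_i\leq\|\alpha\|_\bah$ holds by definition; conversely any finite $S$ is contained in some $S_i$, whence $\|\alpha\|_S\leq a_i\leq\sup_j a_j$, and taking the supremum over $S$ gives $\|\alpha\|_\bah\leq\sup_j a_j=\lim_j a_j$. Then, using $a_i<\infty$, I pick for each $i$ a cochain $\varphi_i$ almost realizing the infimum, say with $[\delta\varphi_i]=\alpha$, $\varphi_i|_{S_i}=0$ and $\|\delta\varphi_i\|_\infty\leq a_i+1/i$. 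Since these $\varphi_i$ satisfy the hypotheses of the first half, the already established inequality gives $\|\alpha\|_\bah\leq\liminf\|\delta\varphi_i\|_\infty$, while on the other hand $\limsup\|\delta\varphi_i\|_\infty\leq\lim(a_i+1/i)=\|\alpha\|_\bah$. The limit therefore exists and equals $\|\alpha\|_\bah$, as required.

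I expect the only genuinely delicate step to be the preliminary verification that $a_i<\infty$, which uses $\G$-invariance to reduce boundedness on $S_i^{n+1}$ to boundedness on a finite set of orbits; the remainder is essentially bookkeeping with monotone sequences and with the two infima $\|\alpha\|_S$ and $\|\alpha\|_\bah$.
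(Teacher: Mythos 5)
Your proof is correct; the paper itself offers no argument for this lemma (it is declared ``easily verified''), and what you write is exactly the standard verification the authors intend, including the one point that genuinely needs care --- the finiteness of $\|\alpha\|_{S_i}$, which you correctly obtain by subtracting off the (bounded, $\G$-invariant) truncation of a representative to the finitely many $\G$-orbits meeting $S_i^{n+1}$, noting that this changes $\delta\varphi$ only by the coboundary of a bounded invariant cochain. The remaining monotonicity and $\varepsilon$-approximation bookkeeping is as it should be (only replace $1/i$ by $1/(i+1)$ to cover $i=0$).
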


% \begin{proof}
%  Assume by contradiction that there exists $\varepsilon\in\R$ and  a finite set $S$  with 
% $$\|\alpha\|_S\geq  \liminf_{i\to \infty} \|\delta\varphi_i\|_\infty+\varepsilon. $$
% Since $S$ is finite and $(S_i)_{i\in\mathbb{N}}$ is an exhaustion, there exists $N\in\N$ with $S\subseteq S_j$ for all $j\geq N$. Then
% $$\|\alpha\|_S=\inf \{\|\delta\varphi\|_\infty\, |\, \varphi\in C^n(\G)^\G,\, [\delta\varphi]=\alpha,\, \varphi|_S=0\}\leq \|\delta\varphi_j\|_\infty,$$
% which gives a contradiction.
% 
% In order to verify the second statement, for every $n\in\mathbb{N}$ we choose an element $\phi_n$ satisfying conditions~(1) and (2) and with the additional property that
% $$\|\delta\phi_n\|_\infty\leq \|\alpha\|_{S_n}+\frac1n.$$
% Then 
% %\begin{align*}
% $$\|\alpha\|_\bah=\sup \{\|\alpha\|_S,\, S\subseteq \G,\, S \ \textrm{finite}\}
% \geq \sup_{n\in\N} \|\alpha\|_{S_n}\geq \limsup \|\delta\phi_n\|_\infty\ ,
% $$%\end{align*}
% which concludes the proof
% thanks to the first statement of the lemma.
% \end{proof}

\subsection{Extension of quasi-cocycles from hyperbolically embedded subgroups}
Let us now suppose that $H$ is a hyperbolically embedded subgroup of $\G$, and recall
that 
$$
r^*\colon H^*_b(\G)\to H^*_b(H)
$$
is the restriction map induced by the inclusion of $H$ in $\G$.
We can now proceed with the proof of 
Theorem~\ref{FPS:thm}, which states that $r^{n+1}$ is an undistorted surjection for every $n\geq 1$, provided that we endow both
$EH^{n+1}_b(\G)$  and $EH^{n+1}_b(H)$ with the $\|\cdot\|_\bah$-seminorm (and that $H^n(H)$ is finite-dimensional).
The key ingredient for our argument  will be an extension result for quasi-cocycles
proved in~\cite{FPS}.

We first need to introduce the notion of \emph{small} simplex in $H$. Such notion depends on the geometry of the embedding of $H$ in $\G$. However,
for our purposes it is sufficient to know that 
 we can single out a particular finite subset $\overline{S}_0$ of $H$ with the property that an element $\overline{h}\in H^{n+1}$ is small
 if and only if $\overline{h}\in \overline{S}_0^{n+1}\subseteq H^{n+1}$ (see~\cite[Definition 4.7]{FPS}). In particular, the number of small simplices in $H$ is finite, so 
 for every cochain $\varphi\in C^n(H)$ 
 the finite number
$$
K(\varphi)=\max \{|\varphi (\ov h)|\, ,\  \ov h\subseteq H^{n+1}\ {\rm small}\}
$$
 is well defined.
 
 Now we have the following extension operator for quasi-cocycles on $H$:
 
 \begin{thm}[{\cite[Theorem 4.1]{FPS}}]\label{FPS:original:thm}
  There exists a linear map
 $$
 \Theta^n\colon  C_{\alt}^n(H)^{H}\to C_{\alt}^n(\G)^\G 
$$
such that  the following conditions hold for every $\varphi\in C_{\alt}^n(H)^{H}$:
\begin{enumerate}
\item\label{2cond}
$
%\begin{equation}\label{2cond}
\sup_{\overline{h}\in H^{n+1}} |\Theta^n(\varphi)(\overline{h})-\varphi(\overline{h})|\leq K(\varphi)\ ;
%\end{equation}
$\vspace{-.3cm}\\
\item\label{3cond}
if $n\geq 2$ then $
%\begin{equation}\label{3cond}
\|\Theta^n(\varphi)\|_\infty \leq n(n+1)\cdot \|\varphi\|_\infty\ ;
$\vspace{-.3cm}\\
%\end{equation}
%(in particular, $\Theta^n$ sends $\oplus_{\lambda\in\Lambda} C_{b,\alt}^n(H_\lambda,U_\lambda)^{H_\lambda}$ into
%$C_{b,\alt}^n(G,V)^G$).
\item\label{4cond}
%\begin{equation}\label{4cond}
$
\|\delta\Theta^n(\varphi)-\Theta^{n+1}(\delta\varphi)\|_\infty\leq 2(n+1)(n+2)K(\varphi)\ .
$
%\end{equation}
\end{enumerate}
 \end{thm}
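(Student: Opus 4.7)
The plan is to construct $\Theta^n$ via a geometric projection $\rho\colon \G \to H$ afforded by the hyperbolic embedding, and then extend $\varphi$ by pulling it back along $\rho$. Because $H$ is hyperbolically embedded in $\G$, there exists a relative generating set for which the resulting Cayley graph is hyperbolic and the left cosets of $H$ are strongly quasi-convex with the bounded projection property: the nearest-point projection of any $g \in \G$ onto a coset of $H$ has uniformly bounded diameter, and distinct cosets project to bounded subsets of each other. This structure provides a ``shadow'' map $\rho\colon \G \to H$, which can be arranged so that $\rho|_H$ agrees with the identity outside a finite set of ``small'' elements (those that feature in $K(\varphi)$).

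The extension I would propose is
\[
\Theta^n(\varphi)(g_0, \ldots, g_n) = \alt_n\bigl(\varphi(e, \rho(g_0^{-1}g_1), \ldots, \rho(g_0^{-1}g_n))\bigr),
\]
where $\alt_n$ denotes alternating symmetrization in $(g_0, \ldots, g_n)$. This cochain is automatically $\G$-invariant because each summand depends only on the ratios $g_0^{-1}g_i$, and alternating by construction. For property~(1), if $\ov{h} = (h_0, \ldots, h_n) \in H^{n+1}$ then the ratios $h_0^{-1}h_i$ already lie in $H$, so $\rho$ acts as the identity on them except on small simplices; combining this with the $H$-invariance and alternating property of $\varphi$ yields $|\Theta^n(\varphi)(\ov{h}) - \varphi(\ov{h})| \leq K(\varphi)$. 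For property~(2), valid when $n \geq 2$, the alternating symmetrization produces heavy cancellations whenever projected coordinates coincide; a combinatorial count of which permutations in $\mathfrak{S}_{n+1}$ can contribute nontrivially to the image tuple, together with the geometric constraint that only a bounded number of coordinates can project near a single coset, reduces the sup-norm estimate to the combinatorial factor $n(n+1)\|\varphi\|_\infty$.

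The main obstacle is property~(3), the cocycle defect estimate. Expanding $\delta\Theta^n(\varphi)$ and $\Theta^{n+1}(\delta\varphi)$, the discrepancy concentrates on terms of the form $\rho(g_0^{-1}g_i)$ versus $\rho(g_j^{-1}g_i)$: each face of an $(n+1)$-simplex uses a different vertex as the reference base-point for $\rho$. The crucial technical lemma would be that these discrepancy tuples always lie in the ``small'' region of $H^{n+2}$, so that their contribution to $\varphi$ is bounded by $K(\varphi)$ rather than by $\|\varphi\|_\infty$. Summing the $n+2$ face contributions and accounting for the alternating symmetrization with its signs then yields the combinatorial constant $2(n+1)(n+2)$. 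Establishing the small-discrepancy property carefully is delicate, as it rests on fine geometric input from the hyperbolic embedding — typically a Behrstock-type inequality bounding how the projections of a single point of $\G$ onto two different cosets of $H$ can simultaneously be large, together with bounded intersection properties for the ``thickened'' cosets. Isolating these ingredients in the precise quantitative form needed for the cocycle estimate is the core of the work.
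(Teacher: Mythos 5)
This statement is quoted verbatim from \cite[Theorem 4.1]{FPS}; the paper offers no proof of it, only (later, in the proof of Theorem~\ref{FPS:thm}) a sketch of the construction it relies on, namely $\Theta^n(\varphi)(\overline g)=\sum_{B\in\mathcal B}\varphi'_B(\mathrm{tr}_n^B(\overline g))$, a \emph{sum over all left cosets} $B$ of $H$ of weighted ``trace'' projections onto $B$. Your construction is genuinely different: you pull $\varphi$ back along a single nearest-point projection $\rho\colon\G\to H$ based at the first vertex and then alternate. This is not a harmless variant --- it fails the theorem. The crucial point is property~(3), whose right-hand side is $K(\varphi)$ (the sup of $|\varphi|$ over the finitely many small simplices), not $\|\varphi\|_\infty$ or $\Def(\varphi)$; your ``crucial technical lemma'' that the discrepancy tuples always lie in the small region of $H^{n+2}$ is false for the single-projection formula. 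A concrete test: take $\G=F(a,t)$, $H=\langle t\rangle$ (hyperbolically embedded), $n=1$, and $\varphi(t^i,t^j)=j-i$, so $\delta\varphi=0$ and $K(\varphi)<\infty$. Writing $\rho(g)=t^{\pi(g)}$, your $\Theta^1(\varphi)(g_0,g_1)=\tfrac12\bigl(\pi(g_0^{-1}g_1)-\pi(g_1^{-1}g_0)\bigr)$ gives $\delta\Theta^1(\varphi)(e,a,at^n)=n-\tfrac n2+0=\tfrac n2$, unbounded in $n$, while $\Theta^2(\delta\varphi)=0$. The element $at^n$ projects trivially to $H$ but travels a long way inside the coset $aH$; a single projection cannot see this, and exactly this loss is what the coset sum in \cite{FPS} is designed to recover (already in degree $1$ this is why quasimorphisms are extended by Brooks/Hull--Osin-type sums over cosets rather than by composing with a retraction).

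Two further signs that the construction is not the right one. First, your formula is an average of values of $\varphi$, so it trivially satisfies $\|\Theta^n(\varphi)\|_\infty\leq\|\varphi\|_\infty$; the stated bound $n(n+1)\|\varphi\|_\infty$ in~(2) is worse precisely because the genuine $\Theta^n$ adds up contributions from up to $\binom{n+1}{2}$ cosets (one for each pair of vertices of the simplex), each weighted trace having controlled norm --- your proposed ``combinatorial count of permutations'' has nothing to count in your formula. Second, for~(3) the real mechanism is that the traces $\mathrm{tr}_n^B$ are built to form an approximate chain map, $\mathrm{tr}_n^B(\partial\overline g)-\partial\,\mathrm{tr}_{n+1}^B(\overline g)$ being supported on degenerate and small simplices, so that after setting $\varphi'=0$ on small simplices the coset sums telescope; this is the content of \cite[Theorem 4.1]{FPS} and cannot be replaced by a Behrstock-type inequality for a single projection. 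If you want to prove the theorem rather than cite it, you need to reproduce that coset-sum architecture.
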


 Theorem~\ref{FPS:thm} would readily follow from Theorem~\ref{FPS:original:thm} if we could get rid of the additive error $2(n+1)(n+2)K(\varphi)$ described in $(3)$ when estimating the defect
 of the extension in terms of the defect of the original quasi-cocycle. The following remark shows that this is not possible in general.
 
 \begin{rem}\label{distortion:qc}
 Let $M$ be an orientable complete finite-volume hyperbolic $3$-manifold with one cusp $C$. If $\G=\pi_1(M)$ and $H<\G$ is the subgroup corresponding to $C$, 
 then the pair $(\G,H)$ is relatively hyperbolic. In particular, $H$ is hyperbolically embedded in $\G$. 
Moreover, $H$ is isomorphic to $\mathbb{Z}\oplus\mathbb{Z}$, whence amenable, and the inclusion $C\hookrightarrow M$
induces a non-injective map $H_2(C)\to H_2(M)$ in homology. Therefore, \cite[Proposition 7.3]{FPS} shows that there is a genuine cocycle
$\varphi\in Z^2(H)^H$ such that $[\delta\Theta^n(\varphi)]$ is not null in $EH^{3}_b(\G)$. In particular, the defect of $\Theta^2(\varphi)$ cannot be zero,
whereas the defect of $\varphi$ vanishes. This shows that there cannot be any linear bound of the defect of $\Theta^2(\varphi)$ in terms of the defect of $\varphi$. 
 \end{rem}

 \subsection{Proof of Theorem~\ref{FPS:thm}}
 We are now ready to prove Theorem~\ref{FPS:thm}. Since in degree 2 the norm $\|\cdot\|_\bah$ is infinite on every non-trivial element
 (see Lemma~\ref{degree2:infinite}), the map $r^2\colon EH^2_b(G)\to EH^2_b(H)$ is obviously undistorted.
 Therefore, 
 Theorem~\ref{FPS:thm} follows from the following:
 
 \begin{thm}
  Let $n\geq 2$ and assume that $H^{n}(H)$ is finite dimensional. For every $\alpha\in EH^{n+1}_b(H)$ there exists $\beta\in\ EH^{n+1}_b(\G)$
  such that $r^{n+1}(\beta)=\alpha$ and
  $$
  \|\beta\|_\bah \leq n(n+1)\|\alpha\|_\bah\ .
  $$
 \end{thm}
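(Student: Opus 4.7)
My strategy is to construct, for every finite $T\subseteq\G$, a $\G$-invariant primitive of a single class $\beta\in EH^{n+1}_b(\G)$ with $r^{n+1}(\beta)=\alpha$ which vanishes on $T^{n+1}$ and whose defect is close to $n(n+1)\|\alpha\|_\bah$; Lemma~\ref{elementary:seminorm} will then yield the required inequality. The whole construction hinges on the extension operator $\Theta^n$ supplied by Theorem~\ref{FPS:original:thm}, applied to carefully chosen primitives of $\alpha$ in $H$.

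The first step exploits Lemma~\ref{elementary:seminorm} inside $H$. Given $T\subseteq\G$ finite and $\epsilon>0$, I first enlarge $T$ so that $T\cap H\supseteq\overline{S}_0$ (possible since $\overline{S}_0$ is finite), and then find $\varphi\in\QC^n(H)^H$ with $[\delta\varphi]=\alpha$, $\varphi|_{(T\cap H)^{n+1}}=0$, and $\|\delta\varphi\|_\infty\leq\|\alpha\|_\bah+\epsilon$. The inclusion $\overline{S}_0\subseteq T\cap H$ forces $K(\varphi)=0$, so that conditions (1) and (3) of Theorem~\ref{FPS:original:thm} become sharp: specifically, $\Theta^n(\varphi)|_H=\varphi$, and $\delta\Theta^n(\varphi)=\Theta^{n+1}(\delta\varphi)$ exactly. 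Setting $\widetilde\psi:=\Theta^n(\varphi)\in C^n(\G)^\G$, we obtain a quasi-cocycle vanishing on $(T\cap H)^{n+1}$ whose defect, via condition (2), is bounded by the desired constant times $\|\delta\varphi\|_\infty$, hence by the desired constant times $\|\alpha\|_\bah+\epsilon$.

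The second step is to correct $\widetilde\psi$ so that it vanishes on the full $T^{n+1}$, and not merely on the portion of it lying in $H^{n+1}$. Since $T^{n+1}$ is finite and $\widetilde\psi$ is $\G$-invariant, I define $b\in C^n_b(\G)^\G$ to agree with $\widetilde\psi$ on the $\G$-orbit of $T^{n+1}$ and to vanish elsewhere; this $b$ is well-defined precisely because $\widetilde\psi$ is constant along $\G$-orbits, and it is bounded because only finitely many values of $\widetilde\psi$ on $T^{n+1}$ are involved. Putting $\psi:=\widetilde\psi-b$ produces a $\G$-invariant cochain vanishing on $T^{n+1}$, and $[\delta\psi]=[\delta\widetilde\psi]$ in $EH^{n+1}_b(\G)$ because $\delta b$ is a bounded coboundary. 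I define $\beta:=[\delta\widetilde\psi]$ and verify $r^{n+1}(\beta)=\alpha$ directly from $\widetilde\psi|_H=\varphi$ together with (1) in Theorem~\ref{FPS:original:thm}.

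The main obstacle lies in two places. First, I must check that $\beta$ is independent of the choice of primitive $\varphi$, so that it genuinely depends only on $\alpha$; two such primitives differ by an $H$-invariant $n$-cocycle, and extending that cocycle via $\Theta^n$ may a priori contribute a non-trivial exact bounded class on $\G$. This is exactly where the assumption $\dim H^n(H)<\infty$ intervenes, allowing one to absorb the ambiguity into a finite-dimensional correction and hence define a linear section of $r^{n+1}$; this is essentially the content of~\cite[Corollary 1.2]{FPS} on which we build. Second and more delicately, I must show that the correction $b$ does not inflate the defect asymptotically, namely that $\|b\|_\infty$ can be made arbitrarily small by choosing $T$ large in an exhaustion of $\G$. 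This should follow from a locality feature of the FPS construction of $\Theta^n$: since $\Theta^n(\varphi)$ at an $(n+1)$-tuple of elements of $\G$ is built from the values of $\varphi$ at $H$-projections of that tuple, enlarging the set on which $\varphi$ vanishes in $H$ to cover all projections of $T$-elements makes $\widetilde\psi$ itself vanish on $T^{n+1}$, rendering $b=0$ and leaving only the sharp contribution from condition (2) of Theorem~\ref{FPS:original:thm}, which matches the claimed $n(n+1)$.
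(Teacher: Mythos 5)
Your overall strategy matches the paper's: extend primitives of $\alpha$ via $\Theta^n$ after arranging $K(\varphi)=0$, use the locality of the FPS construction so that the extension vanishes on a prescribed finite subset of $\G$, and invoke Lemma~\ref{elementary:seminorm}. Your correct observations are that $\overline{S}_0\subseteq T\cap H$ kills the additive errors in Theorem~\ref{FPS:original:thm}, and that enlarging the vanishing set in $H$ to cover the relevant projections makes the extension itself vanish on $T^{n+1}$ (so your auxiliary correction $b$ is indeed unnecessary).

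The gap is in your first ``obstacle'', which is in fact the heart of the proof and which you do not resolve. As $T$ runs through an exhaustion of $\G$, the primitive $\varphi=\varphi_T$ must change, and consequently so does the class $\beta_T=[\delta\Theta^n(\varphi_T)]$: two primitives of $\alpha$ differ by $b+z$ with $b$ bounded and $z$ a genuine invariant cocycle, and $[\delta\Theta^n(z)]=j([z])$ is in general a \emph{non-trivial} element of $EH^{n+1}_b(\G)$ --- Remark~\ref{distortion:qc} exhibits exactly such an example. So there is no ``single class $\beta$'' to which Lemma~\ref{elementary:seminorm} applies; the hypothesis $\dim H^n(H)<\infty$ does not make $\beta$ independent of the choice of $\varphi$, it only confines the ambiguity to a finite-dimensional affine subspace $V$. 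One must then extract from the family $(\beta_T)$ a single class, and the paper does this by a compactness argument in $V$, which is delicate because the seminorms $\|\cdot\|_{S_i'}$ may degenerate on $V\cap N_0^{n+1}(\G)$: one either finds an element of $V\cap N_0^{n+1}(\G)$ with non-trivial restriction (in which case one concludes immediately after rescaling), or passes to the quotient $\overline V=V/(V\cap N_0^{n+1}(\G))$, where the relevant seminorms become equivalent genuine norms, takes a convergent subsequence $\overline\beta_i\to\overline\beta$, and checks that a lift $\beta$ of $\overline\beta$ satisfies both $r^{n+1}(\beta)=\alpha$ (by continuity of the induced map on the finite-dimensional space $\overline V$) and the norm bound. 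Your appeal to ``defining a linear section of $r^{n+1}$'' gives surjectivity, which is already known from~\cite{FPS}, but says nothing about the $\|\cdot\|_\bah$-seminorm of the chosen preimage, which is the whole content of the theorem.
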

 
 The difficulty in the proof arises from the fact that
 Theorem~\ref{FPS:original:thm} does not 
 actually give a map $EH^{n}_b(H)\to EH^n_b(\G)$, but just a map at the level of quasi-cocycles.
 
\begin{proof}
Of course if $\alpha=0$ there is nothing to prove,
so we may suppose $\alpha\neq 0$.

Let $(S_i')_{i\in\mathbb{N}}$ be any exhaustion of $\G$.
It readily follows from the construction of the map $\Theta^n$ in~\cite{FPS} that, for every $i\in\mathbb{N}$, there exists a finite subset $S_i''$ of $H$ such that
$\Theta^n(\varphi)$ vanishes on $S_i'$ whenever $\varphi$ vanishes on $(S''_i)^{n+1}$. Indeed, if $\overline{g}\in \G^{n+1}$, then 
$$
\Theta^n(\varphi)(\overline{g})=\sum_{B\in\mathcal{B}} \varphi'_B({\rm tr}_n^B(\overline{g}))\ ,
$$
where $B$ varies over all the left cosets of $H$ in $\G$, 
${\rm tr}_n^B(\overline{g})$ is a sort of weighted projection of $\overline{g}$ into $B$ (see \cite[Definition 4.5]{FPS}), 
and $\varphi'$ is obtained from $\varphi$ via the left translation by an element of $B$ (after setting $\varphi=0$ on small simplices contained in $H$).
It is proved in~\cite[Theorem 5.1]{FPS} that the sum in the above definition is finite (i.e.~$\varphi'_B({\rm tr}_n^B(\overline{g}))=0$ for all but a finite number of cosets $B$),
and it readily follows from the definition of ${\rm tr}_n^B$ that ${\rm tr}_n^B(\overline{g})$ is supported on a finite number of simplices in $B$ for every $\overline{g}\in\G^{n+1}$, $B\in\mathcal{B}$.
Therefore, we can choose as $S_i''$ the finite set given by the union of the translates in $H$ of the simplices in the support of ${\rm tr}_n^B(\overline{g})$, as 
$\overline{g}$ varies in $S_i'$ and $B$ varies among the cosets such that ${\rm tr}_n^B(\overline{g})\neq 0$.

Moreover, we can suppose that $S_i''\subseteq S''_{i+1}$ for every $i\in\mathbb{N}$ and that
$\bigcup_{i\in\mathbb{N}} S_i''=H$.
We now define an exhaustion $(S_i)_{i\in\mathbb{N}}$ of $H$ by setting
$$
S_i=S_i''\cup \overline{S}_0\, , \qquad i\in \mathbb{N}\ ,
$$
 (recall that $\overline{S}_0$ is the set of vertices of small simplices in $H$).

By Lemma~\ref{elementary:seminorm}, there exists a sequence $\varphi_i$ of invariant quasi-cocycles on $H$ 
such that the following conditions hold: for every $i\in\mathbb{N}$, $[\delta \varphi_i]=\alpha$ and 
$\varphi_i(\overline{s})=0$ for every $\overline{s}\in S_i^{n+1}$; moreover,
$$
\lim_{i\to\infty} \Def(\varphi_i)=\|\alpha\|_\bah\ .
$$
Since alternation does not increase the defect of a quasi-cocycle and does not alter the bounded class of its differential, we can also
assume that each $\varphi_i$ belongs to $\QCa^n(H)^H$.

Let us now set $\psi_i=\Theta^n(\varphi_i)\in C_{\alt}^n(\G)^\G$.
Since $\overline{S}_0\subseteq S_i$, for every $i\in\mathbb{N}$ we have $K(\varphi_i)=0$, so 
Theorem~\ref{FPS:original:thm} implies that
$$
\|\delta\psi_i\|_\infty=\|\delta \Theta^n(\varphi_i))\|_\infty =  \|\Theta^{n+1}(\delta\varphi_i)\|_\infty\leq n(n+1)\cdot \|\delta \varphi_i\|_\infty=n(n+1)\Def(\varphi_i)\ .
$$
In particular, $\psi_i$ is an alternating quasi-cocycle 
with $\Def(\psi_i)\leq n(n+1)\Def(\varphi_i)$. By construction we have $\psi_i|_{S'_i}=0$,  so the differential of $\psi_i$ defines a class $\beta_i\in EH^{n+1}_b(\G)$ such that
$$
%\|\beta_i\|_\infty\leq 
\|\beta_i\|_{S_i'}\leq n(n+1)\Def(\varphi_i)\ .
$$
Moreover, since $K(\varphi_i)=0$, from Theorem~\ref{FPS:original:thm}~(1) we deduce that the restriction of $\psi_i$ to $H^{n+1}$ coincides
with $\varphi_i$, so that $r^{n+1}(\beta_i)=\alpha$ for every $i\in\mathbb{N}$. 

We are now going to prove the following:

\noindent{\bf Claim:}
The $\beta_i$ all belong to a finite-dimensional affine subspace of $EH^{n+1}_b(\G)$.

To this aim observe that, since $\delta\circ\delta=0$, for every element $c\in C^{n-1}_{\alt}(H)^H$ we have
$$
\delta(\Theta^n(\delta c))=\delta(\Theta^n(\delta c)-\delta\Theta^{n-1}(c))\ .
%=           \delta(\Theta^n(\delta c)-\delta\Theta^{n-1}(c))\ .
$$
Therefore, by Theorem~\ref{FPS:original:thm}~(3), the cochain $\delta(\Theta^n(\delta c))$ is the coboundary of an invariant bounded cochain, so that it defines
the trivial element of $EH^{n+1}_b(\G)$. In other words, the map
$$
Z^n_{\alt}(H)^H\to EH^{n+1}_b(\G)\, ,\qquad z\mapsto [\delta(\Theta^n(z))]
$$
induces a well-defined map
$$
j\colon H^n(H)\to EH^{n+1}_b(\G)\ .
$$

% For later reference we also observe that $j(H^n(H))$ is contained in the kernel of $r^n$: in fact, it follows from Theorem~\ref{FPS:original:thm} that,
% for every $z\in Z^n_{\alt}(H)^H$, the restriction of $\Theta^n(z)$ to $H$ stays at finite distance from $z$, so that the restriction
% of $\delta\Theta^n(z)$ to $H$ is the coboundary of an invariant bounded cochain, thus defining the trivial element of $EH^{n+1}_b(H)$. 

Since $H^n(H)$ is a finite-dimensional vector space, in order to prove the claim it suffices to
show that $\beta_i-\beta_j\in j(H^n(H))$ for every $i,j\in\mathbb{N}$.
Indeed, 
since $[\delta\varphi_i]=[\delta\varphi_j]$ in $EH^{n+1}_b(H)$, we have $\varphi_i-\varphi_j=b+z$, where $b\in C^n_{b,\alt}(H)^H$ is bounded and
$z\in Z^n_{\alt}(H)^H$ is a genuine cocycle. 
Since $n\geq 2$, by Theorem~\ref{FPS:original:thm}~(2) the cochain $\Theta^n(b)$ is bounded, hence 
$$
\beta_i-\beta_j=[\delta\Theta^n(\varphi_i)-\delta\Theta^n(\varphi_j)]=[\delta\Theta^n(b)+\delta\Theta^n(z)]=[\delta\Theta^n(z)]\in j(H^n(H))\ ,
$$
and this proves our claim. 

Let now $V$ be the subspace of $EH^{n+1}_b(\G)$ generated by the $\beta_i$, $i\in\mathbb{N}$, and let us set
$W=r^{n+1}(V)=\textrm{Span} \langle \alpha\rangle\subseteq EH^{n+1}_b(H)$. 
Assume first that there exists $\beta\in V\cap N_0^{n+1}(\G)$ such that
$r^{n+1}(\beta)\neq 0$. In this case $r^{n+1}(\beta)$ is a non-zero multiple of $\alpha$, so up to rescaling we may suppose $r^{n+1}(\beta)=\alpha$.
We have thus found an element $\beta$ in the preimage of $\alpha$ with vanishing $\|\cdot\|_\bah$-seminorm, and this certainly implies that $\|\beta\|_\bah\leq n(n+1)\|\alpha\|_\bah$, whence the conclusion.

We may then suppose that $V\cap N_0^{n+1}(\G)$ is contained in $\ker r^{n+1}$. In this case we first observe that, since $\|\cdot\|_{S'_i}\leq \|\cdot\|_{S'_{i+1}}$ and $\|\cdot\|_\bah =\sup_{i\in\mathbb{N}} \|\cdot\|_{S'_i}$,
the subspaces $M_i=\{\beta\in V\, |\, \|\beta\|_{S'_i}=0\}$  satisfy $M_i\supseteq M_{i+1}$ and $\bigcap_{i\in\mathbb{N}} M_i=V\cap N_0^{n+1}(\G)$. Since $V$ is finite-dimensional,
this implies in turn that there is $i_0\in\mathbb{N}$ such that $V\cap N^{n+1}_0(\G)=M_i$ for every $i\geq i_0$. Moreover, if we denote by $B_i$ the subspace of $V$ spanned by
$\{\beta_j,\, j\geq i\}$, then $B_i\neq \{0\}$  and $B_{i+1}\subseteq B_i$ for every $i\in\mathbb{N}$. Using again that $V$ is finite-dimensional, we obtain that, up to increasing $i_0$, we may suppose that
$B_i=B_{i+1}=B$ for every $i\geq i_0$. By definition, for every $i\geq i_0$ the subspace $B$ is spanned by elements with finite $\|\cdot\|_{S_i'}$-seminorm, and this implies that the seminorm $\|\cdot\|_{S_i'}$
is finite on $B$ for every $i\geq i_0$.

%Up to forgetting an initial segment of the sequence of the $\beta_i$,
%we may thus suppose that $V\cap N^{n+1}_0(\G))=M_i$ for every $i\in \mathbb{N}$.

Let us now define the quotient space $\overline{V}=V/(V\cap N_0^{n+1}(\G))$ and let us set $\overline{B}=B/(B\cap N_0^{n+1}(\G))\subseteq \overline{V}$.
For every $i\geq i_0$, the seminorm $\|\cdot\|_{S_i'}$ induces a genuine finite and non-degenerate norm on $\overline{B}$, which will still be denoted by
$\|\cdot\|_{S_i'}$.
Let us denote by $\overline{\beta}_i$ the image of $\beta_i$ in $\overline V$. Then $\overline{\beta}_i\in \overline{B}$ for every $i\geq i_0$, and for every $j\geq i_0$ we have 
\begin{align}\label{final:estimate}
\limsup_{i\to \infty} \|\overline{\beta}_i\|_{S_j'} & \leq \limsup_{i\to \infty} \|{\beta}_i\|_{S'_j}
%\leq \limsup_{i\to \infty} \|{\psi}_i\|_\infty
%\nonumber \\ &
\leq n(n+1)\lim_{i\to \infty} \Def(\varphi_i)=n(n+1)\|\alpha\|_\bah\ .
\end{align}
In particular, the $\overline{\beta}_i$ are definitively contained in a bounded subset in the finite-dimensional normed space $(\overline{B},\|\cdot\|_{S_{i_0}'})$, and up to passing to a subsequence we can suppose that
$\lim_{i\to \infty} \overline{\beta}_i=\overline{\beta}$ in $(\overline{B},\|\cdot\|_{S_{i_0}'})$ for some $\overline{\beta}\in \overline{B}$. Observe now that, being genuine norms on the same finite-dimensional space $\overline{B}$,
the norms $\|\cdot\|_{S'_j}$, $j\geq i_0$, are all equivalent, so $\lim_{i\to \infty} \overline{\beta}_i=\overline{\beta}$ with respect to any norm $\|\cdot\|_{S'_j}$, $j\geq i_0$. Therefore, thanks to~\eqref{final:estimate}
we have
$$
\|\overline{\beta}\|_{S_j'}\leq n(n+1) \|\alpha\|_\bah
$$
for every $j\geq i_0$, hence for every $j\in\mathbb{N}$.

Let now $\beta\in V$ be any representative of $\overline \beta$. 
Using~\eqref{final:estimate} we may deduce that 
$$
\|{\beta}\|_{S_j'}= \|\overline{\beta}\|_{S_j'}\leq n(n+1)\|\alpha\|_\bah  
$$
for every $j\geq i_0$, and this implies in turn that $\|\beta\|_\bah\leq n(n+1)\|\alpha\|_\bah$ thanks to Lemma~\ref{elementary:seminorm}.
Therefore, in order to conclude it suffices to show that $r^{n+1}(\beta)=\alpha$.
By construction, the map $r^{n+1}$ induces a map $\overline{r}^{n+1}\colon \overline{V}\to W$ such that $\overline{r}^{n+1}(\overline{\beta})=r^{n+1}(\beta)$.
If we endow $W$ with its natural Euclidean topology (recall that $W$ is linearly isomorphic to $\R$), then the map $\overline{r}^{n+1}\colon \overline{V}\to W$, begin linear with a finite-dimensional domain, is continuous with respect
to any norm on $\overline{V}$.
We thus get
$$
r^{n+1}(\beta)=\overline{r}^{n+1}(\overline{\beta})=\overline{r}^{n+1}\left(\lim_{i\to \infty} \overline{\beta}_i\right)=\lim_{i\to\infty} {r}^{n+1}(\beta_i)=\lim_{i\to\infty} \alpha=\alpha\ .
$$
This concludes the proof.
 \end{proof}

\subsection{Zero-norm subspaces for acylindrically hyperbolic group}

We will now make use of the following fundamental result about acylindrically hyperbolic groups:
\begin{thm}[Theorem 2.24 of~\cite{DGO}]\label{F2:thm}
Let $\G$ be an acylindrically hyperbolic group. Then there exists a hyperbolically embedded subgroup $H$ of $\G$ such that
$H$ is isomorphic to $F_2\times K$, where $K$ is finite.
\end{thm}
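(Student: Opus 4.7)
The plan is to follow the standard strategy pioneered by Dahmani--Guirardel--Osin: extract a $F_2$ subgroup from the dynamics of loxodromics on the hyperbolic space, and then show that a suitable finite extension of it is hyperbolically embedded. Fix a non-elementary acylindrical action $\G \actson S$ on a Gromov hyperbolic space. The first step is to produce two loxodromic elements $g_1, g_2 \in \G$ whose pairs of fixed points on $\partial S$ are disjoint. This is where non-elementarity of the action is essential: there are infinitely many independent loxodromic directions, and one can pick two by a standard argument (take any loxodromic $g$ and any element $h$ sending $\{g^{+\infty}, g^{-\infty}\}$ off itself, then $g$ and $hgh^{-1}$ are independent).

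The second step is to identify the finite part $K$. For any loxodromic $g$, its maximal elementary subgroup
\[
E(g) = \{h \in \G : h\{g^{\pm\infty}\} = \{g^{\pm\infty}\}\}
\]
is virtually cyclic, and by acylindricity it splits as a finite-by-cyclic extension. The key observation is that for two independent loxodromics $g_1, g_2$ one has $E(g_1) \cap E(g_2) = K$ with $K$ finite (two loxodromics sharing a fixed point on $\partial S$ must, by acylindricity, be virtually a common power, which contradicts independence outside of $K$). Moreover, for a generic choice of $g_1, g_2$ one can arrange that $K$ is normal and is the common kernel of the action of $E(g_i)$ on $\{g_i^{\pm\infty}\}$, so that $K$ centralizes each $g_i$.

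The third step is the ping-pong construction. Choose disjoint neighborhoods of the four fixed points $g_i^{\pm\infty}$ in $\partial S$. For $N$ sufficiently large, the elements $g_1^N$ and $g_2^N$ act on $\partial S$ with north--south dynamics contracting into the prescribed neighborhoods, so by the Klein--Maskit ping-pong lemma $\langle g_1^N, g_2^N \rangle \cong F_2$. Since $K$ centralizes each $g_i$ and has trivial intersection with this free group (again by the choice ensuring that $K$ acts trivially on the axes), the product $H = \langle g_1^N, g_2^N\rangle \cdot K$ is isomorphic to $F_2 \times K$.

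The main obstacle, and the technical heart of the argument, is proving that this $H$ is not just quasi-convex in $S$ but \emph{hyperbolically embedded} in $\G$ in the sense of Dahmani--Guirardel--Osin: one must construct a relative generating set for $\G$ with respect to $H$ such that the corresponding relative Cayley graph is hyperbolic and the restriction of its combinatorial metric to $H$ is locally finite. This requires sharp acylindricity estimates along the axes of $g_1^N, g_2^N$ to guarantee bounded coarse intersection of distinct $\G$-translates of these axes, from which one derives a linear isoperimetric inequality in the relative presentation. Once this is in place, verifying the axioms of a hyperbolically embedded subgroup is a routine check using that $F_2 \times K$ acts properly on its Bass--Serre tree modulo $K$.
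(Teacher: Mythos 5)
This statement is quoted verbatim from Dahmani--Guirardel--Osin (Theorem 2.24 of~\cite{DGO}); the paper offers no proof of it, only the citation. So the comparison here is between your sketch and the argument in~\cite{DGO} itself.

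Your outline gets the easy ingredients right (two independent loxodromics exist by non-elementarity; each $E(g_i)$ is virtually cyclic by acylindricity; the finite subgroup $K$ should be the maximal finite normal subgroup of $\G$, which lies in every $E(g)$ and is centralized by suitable powers of the $g_i$). But there is a genuine gap at the step that carries all the content of the theorem: you defer the proof that $H$ is hyperbolically embedded to ``sharp acylindricity estimates'' yielding ``a linear isoperimetric inequality in the relative presentation,'' followed by ``a routine check using that $F_2\times K$ acts properly on its Bass--Serre tree modulo $K$.'' The last clause is not an argument: what must be shown is that the metric $\hat d$ induced on $H$ by the relative Cayley graph $\Gamma(\G, X\sqcup H)$ is proper, and this is a statement about how $H$ sits inside $\G$, not about any intrinsic action of $F_2\times K$ on a tree. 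Moreover, the isoperimetric route as stated does not obviously apply, since the relative generating set is infinite and one must control the relative area function uniformly.

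The actual route in~\cite{DGO} is structured differently and avoids the boundary ping-pong entirely. One first proves (their Theorem 6.8) that for pairwise independent loxodromic WPD elements $g_1,g_2$ the \emph{collection} $\{E(g_1),E(g_2)\}$ is hyperbolically embedded in $\G$; this is where the quantitative acylindricity estimates along quasi-geodesic axes live, and it is a substantial theorem, not a routine verification. One then invokes a second, separate result: given a hyperbolically embedded collection, subgroups generated by suitable ``sufficiently deep'' finite-index subgroups $H_i\leq E(g_i)$ containing $K=K(\G)$ remain hyperbolically embedded and decompose as the amalgam $H_1\ast_K H_2$. Taking $H_i=\langle g_i^{n}\rangle\times K$ with $K$ central gives $H\cong(\Z\times K)\ast_K(\Z\times K)\cong F_2\times K$, so the freeness of $\langle g_1^n,g_2^n\rangle$ comes for free from the amalgam structure rather than from north--south dynamics on $\partial S$. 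Your sketch would need both of these nontrivial inputs supplied before it could be called a proof.
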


The following result shows that Theorem \ref{generalcase} can now be reduced to
Theorem \ref{FPS:thm} and Theorem \ref{main:thm}:

\begin{cor}\label{reduction}
 Let $\G$ be an acylindrically hyperbolic group. Then  $\dim N_0^n(\G)\geq \dim N_0^n(F_2)$ for every $n\in\mathbb{N}$.
\end{cor}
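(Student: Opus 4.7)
The plan is to deduce this essentially formally from the three tools already assembled: Theorem~\ref{F2:thm} (existence of a nice hyperbolically embedded subgroup), Theorem~\ref{FPS:thm} (undistorted extension with respect to $\|\cdot\|_\bah$), and Corollary~\ref{amenable:inj} (functoriality of $N_0^n$ under amenable quotients). The low-degree cases are vacuous: for $n \in \{0,1\}$ one has $EH^n_b=0$, and for $n=2$ Lemma~\ref{degree2:infinite} forces $N_0^2 = 0$, so in all these cases the inequality is trivial. I may therefore assume $n \geq 2$ (and the argument below is uniform for all such $n$).

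First I would invoke Theorem~\ref{F2:thm} to obtain a hyperbolically embedded subgroup $H \leq \G$ isomorphic to $F_2 \times K$ with $K$ finite. Because $K$ is finite, its real cohomology is concentrated in degree $0$, so Künneth gives $H^{n-1}(H;\R) \cong H^{n-1}(F_2;\R)$, which is finite-dimensional (in fact zero for $n \geq 3$). Thus the finite-dimensionality hypothesis of Theorem~\ref{FPS:thm} is satisfied for every relevant $n$.

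Next, Theorem~\ref{FPS:thm} supplies an undistorted surjection $r^n : EH^n_b(\G) \to EH^n_b(H)$ with respect to $\|\cdot\|_\bah$. The key observation is that this restricts to a surjection $N_0^n(\G) \twoheadrightarrow N_0^n(H)$: given $\alpha \in N_0^n(H)$, Definition~\ref{undistorted:defn} furnishes $\beta \in EH^n_b(\G)$ with $r^n(\beta) = \alpha$ and $\|\beta\|_\bah \leq k\,\|\alpha\|_\bah = 0$, so $\beta \in N_0^n(\G)$. This immediately yields $\dim N_0^n(\G) \geq \dim N_0^n(H)$. Finally, the projection $H = F_2 \times K \to F_2$ is surjective with finite (hence amenable) kernel $K$, so Corollary~\ref{amenable:inj} produces an injection $N_0^n(F_2) \hookrightarrow N_0^n(H)$, giving $\dim N_0^n(H) \geq \dim N_0^n(F_2)$. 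Chaining the two inequalities completes the proof.

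I do not anticipate any real obstacle: the heavy lifting has already been done by Theorems~\ref{F2:thm} and~\ref{FPS:thm}, and the argument above is purely formal. The one substantive point worth emphasizing is that the deduction relies crucially on using the seminorm $\|\cdot\|_\bah$ rather than the Gromov seminorm, since it is only for the former that the restriction map is known to be undistorted, which in turn is precisely what allows one to lift $\|\cdot\|_\bah$-vanishing classes from $H$ to $\G$.
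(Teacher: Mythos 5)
Your proposal is correct and follows the paper's proof essentially verbatim: take the hyperbolically embedded $H\cong F_2\times K$ from Theorem~\ref{F2:thm}, get $\dim N_0^n(F_2)\leq \dim N_0^n(H)$ from Corollary~\ref{amenable:inj}, and $\dim N_0^n(H)\leq \dim N_0^n(\G)$ from the undistorted surjection of Theorem~\ref{FPS:thm}. The extra details you supply (verifying the finite-dimensionality hypothesis via K\"unneth and spelling out why undistortion lifts zero-$\|\cdot\|_\bah$ classes) are correct and merely make explicit what the paper leaves implicit.
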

\begin{proof}
Let $H$ be the hyperbolically embedded subgroup of $\G$ provided by Theorem~\ref{F2:thm}, and observe that $H$ surjects onto $F_2$ via an epimorphism
with finite (whence, amenable) kernel. By Corollary~\ref{amenable:inj} we have $\dim N_0^n(F_2)\leq \dim N_0^n(H)$, while Theorem~\ref{FPS:thm}
ensures that $\dim N_0^n(H)\leq \dim N_0^n(\G)$. The conclusion follows.
\end{proof}

%%%%%%%%%%%%%%%%%%%%%%
\section{Relatively hyperbolic groups, cusped spaces and bicombings}\label{bicombing:sec}
%%%%%%%%%%%%%%%%%%%%%%
In this section we collect some results about relatively hyperbolic groups that will be useful in the sequel. As described in the introduction, we are going to exhibit  non-trivial quasi-cocycles on
the free group $F_2$ by constructing a combinatorial version of (the primitive) of the volume form on a suitably chosen hyperbolic 3-manifold. By Milnor-Svarc Lemma, the fundamental group of any
closed hyperbolic $3$-manifold provides a discrete approximation of hyperbolic $3$-space. On the other hand, in order to get a quasi-isometric copy of hyperbolic $3$-space out of the fundamental group
$\G$ of a \emph{cusped} $3$-manifold we need to glue to the Cayley graph of $\G$ an equivariant collection of horoballs. We now briefly describe this procedure, closely following~\cite{GM}.

We will only consider simplicial graphs, i.e.~graphs without loops and without multiple edges between the same endpoints.
Every graph $G$ will be endowed with the path-metric $d_G$ induced by giving unitary length to every edge. The set of vertices of $G$ will be denoted by
$G^{(0)}$. Following~\cite[Definition 3.12]{GM}, we define the
\emph{(combinatorial) horoball} $\mathcal H(G)$ based on  $G$ as follows. The vertex set of $\mathcal H(G)$ is given by $G^{(0)} \times \N$, 
and two vertices $(g,n)$ and $(g',n')$ are joined by an edge if and only if one of the following conditions holds:
\begin{itemize}
    \item either $|n-n'|=1$ and $g=g'$, 
    \item or $n=n'$, $g \ne g'$ and $d_G(g, g') \le 2^n$.
\end{itemize}

Let us now fix a finitely generated group $\G$ with a distinguished\footnote{The constructions and the results that we are going to recall below also hold
for groups with a family of distinguished subgroups, but the case of a single subgroup is slightly easier and sufficient to our purposes.} finitely generated subgroup $H$. We choose a symmetric finite generating set $S$ for $\G$ containing a generating set for $H$,
and we denote by $\Cay(\Gamma,S)$ the associated Cayley graph, i.e.~the graph having $\G$ as the set of vertices, and such that two elements $g,g'\in \G=\Cay(\G,S)^{(0)}$ are joined by a single edge
if and only if $g^{-1}g'\in S$.  Observe that  the full subgraph of $\Cay(\Gamma,S)$ with vertices in $H$ coincides with the Cayley graph
of $H$ with respect to the generating set $S\cap H$. The left translation by $g\in \G$ induces an isomorphism between $\Cay(H,S\cap H)$ and the full subgraph of $\Cay(\Gamma,S)$ with vertices in $gH$,
which, in particular, is connected. We denote by  $\mathcal{H}_{gH}$ the combinatorial horoball based on such subgraph, and we identify the full subgraph of 
$\mathcal{H}_{gH}$ with vertices in $gH\times \{0\}$ with the full subgraph of $\Cay(\G,S)$ with vertices in $gH$. 

\begin{defn}[\cite{GM}]\label{defn:cusped} The \emph{cusped graph} $X$ associated to the pair $(\G,H)$ (and to a finite generating set $S$ as above) is the graph obtained 
by gluing a 
 combinatorial horoball $\mathcal{H}_{gH}$ to $\Cay(\G,S)$  for every left coset $gH$ of $H$,
 via the identification of the full subgraph of 
$\mathcal{H}_{gH}$ with vertices in $gH\times \{0\}$ with the full subgraph of $\Cay(\G,S)$ with vertices in $gH$. 
\end{defn}

\begin{rem}
 For ease of notation, we often do not distinguish between $X$ and its vertex set.
\end{rem}

The relative hyperbolicity of the pair $(\G,H)$ is encoded by the geometry of the cusped graph $X$ as follows:

\begin{thm}[{\cite[Theorem 3.25]{GM}}]
 The pair $(\G,H)$ is relatively hyperbolic if and only if the cusped graph $X$ is Gromov hyperbolic.
\end{thm}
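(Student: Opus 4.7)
The plan is to reduce to one of the standard equivalent characterizations of relative hyperbolicity, such as Farb's definition via the coned-off Cayley graph together with the bounded coset penetration property (BCP), or equivalently Bowditch's fine hyperbolic graph characterization. The crucial preliminary ingredient is a careful analysis of the geometry of a single combinatorial horoball $\calH(G)$. I would first show that $\calH(G)$ is $\delta$-hyperbolic for a universal $\delta$ independent of the base graph $G$: the key point is that any two depth-$0$ vertices $(g,0)$ and $(g',0)$ with $d_G(g,g')=D$ are joined by a geodesic of length $2\lceil \log_2 D \rceil + O(1)$, obtained by climbing vertically to depth $\sim \log_2 D$, moving horizontally across at most a bounded number of edges at that depth, and descending back. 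This logarithmic geometry mirrors standard horoballs in $\H^n$ and yields thin triangles by a direct verification.

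For the forward direction, assuming $(\G,H)$ is relatively hyperbolic in Farb's sense, there is a natural $1$-Lipschitz projection $X \to \widehat{\Cay}(\G,S)$ to the coned-off Cayley graph which collapses each horoball $\calH_{gH}$ to the corresponding cone vertex. Given a geodesic triangle in $X$, I would decompose each side into ``horizontal'' pieces running in $\Cay(\G,S)$ and ``vertical'' excursions into individual horoballs. Using BCP, excursions of two different sides into the same coset horoball must enter and exit at uniformly close vertices; combined with the horoball analysis above and hyperbolicity of $\widehat{\Cay}(\G,S)$, this yields uniform thinness of triangles in $X$.

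For the reverse direction, assuming $X$ is $\delta$-hyperbolic, one can recover hyperbolicity of $\widehat{\Cay}(\G,S)$ from the fact that the projection $X \to \widehat{\Cay}(\G,S)$ collapses horoballs to points and is a quasi-isometry on the complement of large depths. The BCP property can then be read off from the hyperbolicity of $X$ as follows: two quasi-geodesics in $X$ with nearby endpoints must enter and exit each deep horoball at uniformly nearby vertices (otherwise the quadrilateral they span would violate the $\delta$-thinness), and distinct horoballs cannot both be penetrated deeply by a pair of fellow-travelling quasi-geodesics.

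The main obstacle is not conceptual but rather the careful bookkeeping needed to turn the rough decomposition of geodesics of $X$ into precise quantitative statements matching the BCP constants (or, alternatively, the fineness parameters in Bowditch's formulation). In particular, one must control how a geodesic of $X$ alternates between ``shallow'' travel and deep dives into horoballs, and verify that these alternations correspond exactly to the relative structure of geodesics in the coned-off graph. Since this is precisely the content of Theorem~3.25 of~\cite{GM}, in practice I would sketch the above reduction and invoke their detailed verification for the quantitative estimates.
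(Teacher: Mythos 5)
The paper does not actually prove this statement: it is quoted verbatim as \cite[Theorem 3.25]{GM} and used as a black box, so the only meaningful comparison is with Groves--Manning's own argument. Your preliminary ingredient is correct and is indeed theirs: combinatorial horoballs are $\delta$-hyperbolic for a universal $\delta$, and geodesics in them consist of two vertical segments joined by a horizontal segment of bounded length at depth roughly $\log_2 D$ (this is the description the present paper itself invokes later, in the proof of its Theorem~\ref{main theorem GM}). However, the equivalence itself is proved in \cite{GM} by a genuinely different route from the one you sketch: they work with Osin's characterization of relative hyperbolicity via a finite relative presentation with linear relative Dehn function, build a simply connected ``cusped'' $2$-complex whose $1$-skeleton is the cusped graph, and show that the linear relative isoperimetric inequality for $(\G,H)$ is equivalent to a linear combinatorial isoperimetric inequality for the cusped complex, hence to hyperbolicity of $X$. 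This completely sidesteps the thin-triangle bookkeeping that your Farb/BCP approach requires.

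That bookkeeping is where your proposal has a genuine gap, and it is not merely quantitative. In the forward direction, the projection $X\to\widehat{\Cay}(\G,S)$ collapses each horoball --- an unbounded set --- to a single cone point, so it is not a quasi-isometry and hyperbolicity of the coned-off graph cannot be transported back along it; and your decomposition of a geodesic triangle of $X$ into ``horizontal'' and ``vertical'' pieces presupposes knowing the shape of geodesics of $X$, which is essentially the conclusion you are trying to reach. To make this route work one must first exhibit an explicit $\G$-equivariant family of paths (concatenations of relative geodesics with the logarithmic horoball arcs), prove they are uniform quasi-geodesics using BCP, and only then verify thinness --- none of which is carried out in the sketch. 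You acknowledge this and defer to \cite{GM}; since the paper under review does exactly the same, the citation is appropriate, but as a standalone proof the proposal is an outline with its hardest step missing, and the step it outlines is not the one \cite{GM} actually takes.
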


\begin{rem} Indeed, there is a slight difference between our definition of cusped graph and Groves-Manning's one, in that our cusped graphs are necessarily simplicial, whereas
Groves and Manning explicitly allow multiple edges in their definition. We avoid double edges because it will be convenient to consider a cusped graph as contained in every Rips complex 
over it. However, in our applications we will be dealing only with torsion-free groups, for which our definitions precisely coincide with the ones in~\cite{GM}. 
\end{rem}

%As anticipated above, we will now forget about the structure of $X$ as a graph: namely, after using the graph structure of $X$ to define a distance on it,
%we will identify $X$ with the discrete metric space consisting of its vertices. Then,
We have that $X^{(0)}$ is in canonical one-to-one correspondence with $\G\times\mathbb{N}$: this holds because we are dealing with the simple case of a pair $(\G,H)$ where $H$ is a single subgroup.
Henceforth we will tacitly make use of this identification, and denote vertices of $X$ by pairs in $\G\times\mathbb{N}$.
Following~\cite{GM}, we define the depth function
$$
D\colon X^{(0)}\to \mathbb{N}\, ,\qquad D(g,n)=n\ .
$$
For every horoball $\mathcal H\subseteq X$ and every $n\in\mathbb{N}$, we define  the \emph{$n$--horoball} $\mathcal{H}_n\subseteq X$ associated with 
$\mathcal H$ as the full subgraph of $X$ with vertices in  $ \mathcal H_n= D^{-1}([n,+\infty))\cap \mathcal{H}$. If a cusped graph $X$ is $\delta$--hyperbolic and $C > \delta$, then $C$--horoballs are convex in $X$  \cite[Lemma 3.26]{GM}.

\subsection{A quasi-geodesic bicombing}
%Henceforth we forget about the structure of $X$ as a simplicial graph, and we denote by $X$
%the set of its vertices. In this way, the set $X$ is identified with the set of vertices of $\calX$. In particular, the depth function introduced above may be interpreted  as a map 
% $D\colon X\to \mathbb{N}$ on the set of vertices of $\calX$.
%For every simplicial complex $Y$, we denote by $(C_*^\Delta(Y),\partial)$ the chain complex of real simplicial chains on $Y$, endowed with the $\ell^1$-norm $\|\cdot \|_1$ such that
%$$
%\left\|\sum_{i\in I} a_i\sigma_i\right\|_1=\sum_{i\in I} |a_i|
%$$
%for every reduced sum $\sum_{i\in I} a_i\sigma_i\in C_*^\Delta(Y)$. 
%If $\sigma$ is a simplex in $Y$, then we denote by $\supp(\sigma)\in Y^{(0)}$ the set of vertices of $\sigma$, and if $c=\sum_{i\in I} a_i\sigma_i$ is a chain in reduced form, then we set
%$\supp(c)=\bigcup_{i\in I}\supp(\sigma_i)$. 
%For every chain $c$ we set 
%\begin{align*}
%\max D(c) &= \max \{D(v) ,\, v\in \supp(c)\}\\
%\min D(c) &= \min \{D(v),\, v\in \supp(c)\} . 
%\end{align*}
We keep notation from the previous section, so that $X$ is the cusped graph associated to a relatively hyperbolic pair $(\G,H)$. 
Recall from Section~\ref{sec:2} that $C_i^{\Delta}(X)_\red$ denotes the space of reduced simplicial $i$-chains over $X$. 
A (homological) bicombing $Q\colon X\times X\to C_1^{\Delta}(X)_\red$ is a map such that
$$
\partial Q(x_0,x_1)=x_1-x_0\ .
$$
A bicombing is \emph{antisymmetric} if $Q(x_0,x_1)=-Q(x_1,x_0)$ for every $(x_0,x_1)\in X^2$, and \emph{$S$-quasi-geodesic} 
if there exists $S>0$ such that 
$\supp(Q(x_0,x_1))$ is contained in the $S$-neighborhood $\mathcal N_S(\gamma)$ of any geodesic $\gamma$ joining $x_0$ with $x_1$
(in~\cite{GM} there is the additional requirement that the norm of $Q(x_0, x_1)$ be bounded above by $S\cdot d(x_0,x_1)$; we will never need this in our argument).
 Moreover, $Q$ is \emph{equivariant} if $Q(g(x_0),g(x_1))=g\cdot Q(x_0,x_1)$ for every
$(x_0,x_1)\in X^2$, $g\in\G$. 

 Let $\sigma$ be a $1$-simplex in $X$%, with $k \ge 1$
.  We define the maximal and the minimal depth of $\sigma$ as follows:
$\max D(\sigma)=\max \{D(v) ,\, v\in \supp(\sigma)\}$, while 
 $\min D(\sigma)=\min \{D(v) ,\, v\in \supp(\sigma)\}$ if there exists a horoball containing all the vertices of $\sigma$, and
$\min D(\sigma)=-\infty$ otherwise. Then
for any given chain $c\in C_1^{\Delta}(X)_\red$ we set
\begin{align*}
\max D(c) &= \max \{D(\sigma),\, \sigma\ {\rm appears\ in}\ c\}=\max \{D(v) ,\, v\in \supp(c)\}\in\mathbb{N},\\
\min D(c) &= \min \{D(\sigma),\, \sigma\ {\rm appears\ in}\ c\}\in\mathbb{N}\cup\{-\infty\}\ . 
\end{align*}

The existence of a quasi-geodesic bicombing with good filling properties, as stated in Theorem \ref{main theorem GM}, is essentially due to Groves and Manning (\cite[Section 5]{GM} and \cite[Theorem 6.10]{GM}). We fix the same notation as in~\cite{GM}, i.e.~we suppose $\delta\geq 1$ and we set
$$
K=10\delta, \quad L_1=100k,\quad
L_2=3L_1\ .
$$

As a quick guide, in the theorem below properties \ref{Q1},\ref{Q2},\ref{edge},\ref{2edge} are just convenient hypotheses to work with small simplices. Property \ref{sumzw}, combined with properties \ref{minDw} and \ref{maxDz}, says that a bicombing triangle split into a ``shallow'' part, $z$, and a part that lies deep into the horoballs, $w$. Both $w$ and $z$ are supported near a corresponding geodesic triangle by \ref{vicini}, and the shallow part has bounded norm by \ref{normT1} (implying that the bicombing chains cancel out nicely in the shallow part). Also, $z$ is alternating by \ref{antisymmetry}.

\begin{thm} \cite{GM} \label{main theorem GM} If $(\Gamma, H)$ is a relatively hyperbolic pair, then there are positive constants 
$T_1,S_1$,
an $S_1$-quasi-geodesic antisymmetric $\Gamma$-equivariant bicombing $Q$ on $X$, and $\G$-equivariant maps $z,w\colon X^3\to Z^{\Delta}_1(X)_\red$ with the following properties: \begin{enumerate}
    \item $Q(x_0,x_1) = [x_0, x_1]$, if $[x_0, x_1]$ is an edge of $X$;\label{Q1}
    \item if $x_0,x_1$ belong to the same $0$-horoball $\mathcal{H}$ and $d_\mathcal{H}(x_0,x_1)=2$, where $d_\mathcal{H}$ is the intrinsic distance on $\calH$, then
    $Q(x_0,x_1)=[x_0,y]+[y,x_1]$, where $y$ is a vertex in $\calH$; in particular,
    $\supp( Q({}x_0, {}x_1))\subseteq \mathcal H$;\label{Q2}
	\item $Q(x_0, x_1) + Q(x_1, x_2) + Q(x_2,x_0) = z(x_0,x_1,x_2)+w(x_0,x_1,x_2)$;\label{sumzw}
	\item  if $[x_i,x_j]$ is an edge of $X$ and $D(x_i)=0$ for every $i,j\in\{0,1,2\}$, then $z({}x_0, {}x_1, {}x_2)=[x_0,x_1]+[x_1,x_2]+[x_2,x_0]$ and $w(x_0,x_1,x_2)=0$; \label{edge}     
\item if $x_0,x_1,x_2$ belong to a 0-horoball $\mathcal H$ of $X$ and $d_\mathcal{H}(x_i,x_j)\leq 2$ for every $i,j\in\{0,1,2\}$,
        then $ \supp(z({}x_0, {}x_1, {}x_2))$ is contained in $\mathcal H\cap B(x_0,3)$, where $B(x_0,3)$ is the ball in $X$ of radius 3 centered at $x_0$;\label{2edge}
        	\item  $\supp (z(x_0, x_1, x_2)) \cup \supp(w(x_0,x_1,x_2))\subseteq \mathcal N_{S_1}(\gamma(x_0,x_1)\cup\gamma(x_1,x_2)\cup \gamma(x_2,x_0))$, 
	where $\gamma(x_i,x_j)$ is any geodesic joining $x_i$ with $x_j$;\label{vicini}
          	\item $\min D(w(x_0,x_1,x_2)) \ge L_2$;\label{minDw}
	\item $\max D (z(x_0,x_1,x_2)) \le 2L_2$;\label{maxDz}
	\item $\|z(x_0,x_1,x_2)\|_1 \le T_1$;\label{normT1}
	\item $z(x_{\tau(0)},x_{\tau(1)},x_{\tau(2)})={\rm sgn}(\tau)z(x_0,x_1,x_2)$ and $w(x_{\tau(0)},x_{\tau(1)},x_{\tau(2)})={\rm sgn}(\tau)w(x_0,x_1,x_2)$
	for every permutation $\tau$ of $\{0,1,2\}$.\label{antisymmetry}
\end{enumerate}
\end{thm}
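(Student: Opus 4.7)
The plan is to follow the strategy of Groves--Manning, constructing $Q$ as a relative analogue of Mineyev's homological bicombing. First I would specify $Q$ on pairs by cases. For endpoints $x_0,x_1$ lying in a common horoball $\mathcal H$, define $Q(x_0,x_1)$ to be the ``preferred'' horoball path: ascend vertically from $x_0$ and $x_1$ to the unique shallowest depth $n^\ast$ at which $d_\mathcal{H}(\cdot,\cdot) \leq 2$, then connect the two endpoints at that level by the canonical one- or two-edge path (giving property \ref{Q2}); edges of $X$ that are already realized get sent to themselves, which handles \ref{Q1}. For endpoints $x_0,x_1$ not in a common horoball, use Mineyev's star bicombing on $X$ (which is $\delta$--hyperbolic) to obtain a quasi-geodesic homological chain supported in $\mathcal N_{S_1}(\gamma(x_0,x_1))$; symmetrize by averaging with its reflection to enforce antisymmetry $Q(x_1,x_0)=-Q(x_0,x_1)$, and average over the $\Gamma$--orbit using the finiteness of stabilizers (or use an equivariant star as in~\cite{Mineyev1,GM}) to get $\Gamma$--equivariance.

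Next I would analyze the bicombing triangle. For each $(x_0,x_1,x_2)\in X^3$, the cycle
$$
c(x_0,x_1,x_2)\ =\ Q(x_0,x_1)+Q(x_1,x_2)+Q(x_2,x_0)
$$
is supported in the $S_1$--neighbourhood of the geodesic triangle, by the quasi--geodesic property. By $\delta$--hyperbolicity, outside the $K$--neighbourhoods of the three sides the triangle is $\delta$--thin and the three bicombing chains fellow--travel, so their contributions cancel up to bounded error. The key geometric observation in~\cite{GM} is that the parts of the bicombing that go deep into a horoball $\mathcal H$ behave ``predictably'': two bicombing paths that enter $\mathcal H$ at nearby entry points descend to nearly the same depth, traverse horizontally at that depth, and ascend on the opposite side. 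I would therefore define $w(x_0,x_1,x_2)$ to be the portion of $c(x_0,x_1,x_2)$ supported at depth $\geq L_2$ in horoballs, assembled into cycles by pairing up each descent with the corresponding ascent of the companion bicombing path (this uses the horoball-convexity statement following from $L_2 > \delta$, so each horoball contributes its own cycle). Setting $z(x_0,x_1,x_2) := c(x_0,x_1,x_2) - w(x_0,x_1,x_2)$ then enforces \ref{sumzw}, and by construction $z$ is supported where the bicombing paths are all at depth $\leq 2L_2$, giving \ref{maxDz}, while $w$ lives at depth $\geq L_2$, giving \ref{minDw}. Properties \ref{edge} and \ref{2edge} are verified directly from the explicit local definition of $Q$ in \ref{Q1}--\ref{Q2} (in these configurations no descent into a horoball is needed, so $w$ vanishes and $z$ is the obvious triangle cycle). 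The containment \ref{vicini} follows from the $S_1$--quasi-geodesic property. Antisymmetry \ref{antisymmetry} is achieved by alternating both $z$ and $w$ over the permutation group of $\{0,1,2\}$, which is compatible with $Q$ being antisymmetric in its two arguments.

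The main obstacle will be establishing the uniform $\ell^1$--bound \ref{normT1} on $\|z\|_1$, independent of the triangle. This is the heart of the proof: one must show that the ``shallow'' portion of the bicombing triangle, which a priori could sprawl along all three sides, in fact reduces to a cycle of uniformly bounded combinatorial size. The argument proceeds by partitioning the triangle into a finite number of regions of three types: (i) a bounded central region near a centre of the triangle (controlled by $\delta$--thinness), (ii) fellow--travelling regions along each side (where the star construction makes the two chains coincide up to bounded error, so their difference is a bounded cycle), and (iii) horoball excursions at depth $\leq 2L_2$ (of which there are only boundedly many entry/exit pairs per side because each such excursion consumes geodesic distance, and the depth truncation at $L_2$ limits how far down any relevant piece can go). Summing the uniformly bounded contributions from each region yields the constant $T_1$. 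Finally, cocycle--valuedness of $z$ and $w$ is verified by noting that both were constructed as unions of cycles (the individual horoball ``caps'' for $w$, and the complementary shallow cycle for $z$), so $\partial z = \partial w = 0$ in $Z_1^\Delta(X)_{\red}$.
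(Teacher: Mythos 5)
Your high-level strategy (a Mineyev-type bicombing adapted to the cusped space, a thick/thin decomposition of the bicombing triangle into $z$ and $w$, alternation for antisymmetry) is the same as the paper's, which assembles the statement from the constructions of Groves--Manning. However, there are genuine gaps at the points where the real work happens. First, for pairs not lying in a common horoball you propose to ``use Mineyev's star bicombing on $X$'' directly. This does not work: the cusped graph $X$ is not of uniformly bounded valence (a vertex at depth $n$ of a horoball has on the order of $|B_H(2^n)|$ neighbours), and Mineyev's construction and its norm bounds require bounded geometry. Moreover, a bicombing path between two thick points may have to penetrate arbitrarily deep into \emph{intermediate} horoballs. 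This is precisely why Groves--Manning introduce preferred paths: the path is decomposed into segments, Mineyev's bicombing is applied only to the segments lying in the thick part $D^{-1}[0,L_2]$, and the deep horoball segments are replaced by explicit vertical-horizontal-vertical paths. Your case split (common horoball versus not) misses the intermediate-horoball phenomenon entirely, and with it the reason properties \eqref{minDw} and \eqref{maxDz} are delicate.

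Second, your definition of $w$ as ``the portion of $c(x_0,x_1,x_2)$ supported at depth $\geq L_2$'' is not well defined as a cycle: truncating a $1$-cycle at a depth level produces a chain whose boundary is supported on the crossing points, and the proposed ``pairing up each descent with the corresponding ascent'' is exactly the assertion that these boundary terms cancel, which is not justified and is in fact the content of the hard part of \cite[Theorem 6.10]{GM}. In the actual argument the logic runs the other way: one first builds $\overline{z}$ explicitly as a finite sum of bicombing chains $Q(\ddot{\psi}(v),\ddot{\psi}(w))$ over consecutive vertices of a \emph{thick subpicture} of a skeletal filling of the preferred triangle (this is where the uniform bound $\|z\|_1\leq T_1$ and the bound $\max D(z)\leq 2L_2$ come from, via the combinatorics of thick subpictures and the fact that geodesics between their images avoid deep horoballs), and then \emph{defines} $\overline{w}=Q(\partial(x_0,x_1,x_2))-\overline{z}$, so that $\overline{w}$ is automatically a cycle; that $\min D(\overline{w})\geq L_2$ is then a theorem. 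Your sketch of the $\ell^1$ bound via a three-region partition likewise presupposes that ``there are only boundedly many entry/exit pairs per side,'' which is not obvious and is what the skeletal-filling machinery is designed to establish. As written, the proposal would need essentially all of \cite[Sections 5--6]{GM} to be filled in at these two points.
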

\begin{proof}
We just define $Q(x,y)$ as the projection of $q_{x,y}$ in $C_1^{\Delta}(X)_\red$, 
where $q_{x,y}\in C_1^\Delta(X)$ is constructed in \cite{GM} as follows.
In \cite[Lemma 3.27]{GM} the authors choose an antisymmetric, $\G$-equivariant geodesic bicombing $\gamma$  with the property that if 
$x$ and $y$ lie in the same $L$-horoball, where $L>2\delta$, then $\gamma(x,y)$ 
consists of at most two vertical paths (of arbitrary length) and a horizontal path of length at most 3. Clearly if $d(x,y)=1$ then $\g(x,y)$ is the edge between $x$ and $y$.
Moreover, if $x$ and $y$ belong to the same $0$-horoball $\mathcal{H}$ and $d_\mathcal{H}(x,y)=2$, then 
it is readily seen that also $d(x_0,x_1)=2$, so that
the geodesic $\g(x,y)$ may be chosen to be equal to a concatenation $[x_0,y]*[y,x_1]$ for some vertex $y\in\mathcal{H}$.

For each pair of points $x,y$ Groves and Manning select an ordered subset $\calH_{x,y}$ of the set $\calC_{x,y}^K$  of $L_1$-horoballs  intersecting the $K$-neighborhood of $\g(x,y)$ \cite[Remark 4.2, Theorem 4.12]{GM},
and they define a \emph{preferred path} $p_{x,y}$ joining $x$ to $y$, in such a way that $p_{x,y}$ decomposes into the concatenation of minimizing geodesics between
the horoballs in $\calH_{x,y}$ and 
one suitably chosen path in each $\calH_{x,y}$~\cite[Definition 5.7]{GM}.

The homological bicombing $q_{x,y}$ is then obtained as follows~\cite[Definition 6.4]{GM}: one  decomposes $p_{x,y}$ as a concatenation of segments in $D^{-1}[0,L_2]$ and in $D^{-1}[L_2,\infty)$ 
where each segment in $D^{-1}[L_2,\infty)$ is contained in one element of $\calH_{x,y}$; then, each segment with endpoints $x_1,x_2$ in $D^{-1}[0,L_2]$ 
is replaced  by the antisymmetric bicombing $Q'_{x_1,x_2}$ constructed by Mineyev in \cite{Mineyev1}, and each segment contained in an $L_2$-horoball $\calH_{L_2}$, where $\calH\in\calH_{x,y}$, is replaced by
a path in the same $L_2$-horoball consisting of at most two vertical paths and a horizontal path of length 1 \cite[Definition 6.4]{GM}. 
Finally, $q_{x,y}$ is antisymmetrized.
Conditions~\eqref{Q1} and \eqref{Q2} now follow from the explicit description of $Q$ inside $L_2$-horoballs, together with the fact that
Mineyev's bicombing $Q'_{x_1,x_2}$ is obtained by antisymmetrizing $p_{x_1,x_2}=\gamma(x_1,x_2)$ if $d(x_1,x_2)\leq 10\delta$. 
Moreover, \cite[Proposition 6.5]{GM} implies that $Q$ is $S_1$-geodesic. 

Henceforth we denote by $Q$ also the obvious linear extension of $Q$ to linear combinations of pairs, so that 
$$
Q(\partial (x_0,x_1,x_2))=Q(x_0,x_1)+Q(x_1,x_2)+Q(x_2,x_0)\ .
$$

We first define the cycles $z(x_0,x_1,x_2)$ and $w(x_0,x_1,x_2)$ in the particular cases described in items~\eqref{edge} and~\eqref{2edge}.
We first suppose 
that  $[x_i,x_j]$ is an edge of $X$ and $D(x_i)=0$ for every $i,j\in\{0,1,2\}$. 
Then 
we set ${z}(x_0,x_1,x_2)=Q(\partial (x_0,x_1,x_2))=[x_0,x_1]+[x_1,x_2]+[x_2,x_3]$ and ${w}(x_0,x_1,x_2)=0$, and it is immediate to check that
this choice fulfills all the requirements of the statement.

Suppose now $x_0,x_1,x_2$ belong to a 0-horoball $\mathcal H$ of $\calX$ and $d_\mathcal{H}(x_i,x_j)\leq 2$ for every $i,j\in\{0,1,2\}$. By claims
\eqref{Q1} and~\eqref{Q2}, 
the cycle $Q(\partial (x_0,x_1,x_2))$ is the sum of at most 6 consecutive edges of $\mathcal{H}$, and $x_0$ is an endpoint of one of these edges. 
Therefore, the support of $Q(\partial (x_0,x_1,x_2))$ is contained in $\mathcal{H}\cap B(x_0,3)$. Let us now distinguish two cases:
if $D(x_i)\geq L_2+3$ for every $i=0,1,2$, then $\min D(\supp(Q(\partial (x_0,x_1,x_2))))\geq L_2$, and
we set
${z}(x_0,x_1,x_2)=0$, ${w}(x_0,x_1,x_2)=Q(\partial (x_0,x_1,x_2))$.
Otherwise, $\max D(\supp(Q(\partial (x_0,x_1,x_2))))\leq 2L_2$, and we set
${z}(x_0,x_1,x_2)=Q(\partial (x_0,x_1,x_2))$ and ${w}(x_0,x_1,x_2)=0$.

Let us now suppose that the triple $(x_0,x_1,x_2)$ does not fall into the cases described in items~\eqref{edge} and~\eqref{2edge}.
We denote by $\overline{z}(x_0,x_1,x_2)$ the reduced cycle associated to the cycle $c_{x_0x_1x_2}$ defined in~\cite[Definition 6.8]{GM},  and we set 
\begin{align*}
\overline{w}(x_0,x_1,x_2)&=Q(\partial (x_0,x_1,x_2))-\overline{z}(x_0,x_1,x_2)\ ,
\end{align*}
and
\begin{align*}
z(x_0,x_1,x_2)&=\frac{1}{6}\sum_{\tau\in \mathfrak{S}_3} \overline{z}(x_{\tau(0)},x_{\tau(1)},x_{\tau(2)})\ ,\\
w(x_0,x_1,x_2)&=\frac{1}{6}\sum_{\tau\in \mathfrak{S}_3} \overline{w}(x_{\tau(0)},x_{\tau(1)},x_{\tau(2)}). 
\end{align*}
Since $\overline{z}$ is a cycle, then so are $\overline{w}$, $z$ and $w$. Conditions \eqref{sumzw} and \eqref{antisymmetry} 
follow from the definitions and from the fact that $Q$ is antisymmetric. 

Since $z,w$ are obtained from $\overline{z},\overline{w}$ via alternation, in the proof of items \eqref{vicini}, \eqref{minDw}, \eqref{maxDz}, \eqref{normT1}
we can replace $z,w$ with $\overline{z},\overline{w}$, respectively.

The fact that $\overline{z},\overline{w}$ satisfy properties \eqref{minDw} and \eqref{normT1} is proved in \cite[Theorem 6.10]{GM}.

In order to show \eqref{vicini} and \eqref{maxDz} we need to describe the construction of $\overline{z}(x_0,x_1,x_2)$ in more detail. For any triple $(x_0,x_1,x_2)$ of vertices of $X$, a \emph{preferred
triangle} with vertices $x_0,x_1,x_2$ is a map $\psi\colon \partial \Delta^2\to X$ which takes the vertices and the sides of $\Delta^2$ respectively to $x_0,x_1,x_2$ and to the preferred paths 
$p_{x_0,x_1}$, $p_{x_1,x_2}$, $p_{x_2,x_0}$~\cite[Definition 5.28]{GM}. A skeletal filling of $\psi$ is a map $\ddot{\psi}\colon {\rm Skel}(\psi)\to X$ of $\psi$, 
where ${\rm Skel}(\psi)$ is a $1$-complex containing
suitable subdivisions of the sides of $\Delta^2$, and $\ddot{\psi}$ is a continuous map extending $\psi$ \cite[Definition 5.26]{GM}. A \emph{thick subpicture} of ${\rm Skel}(\psi)$
is (the quotient of) a subgraph of ${\rm Skel}(\psi)$ which is taken by (the map induced by) $\ddot{\psi}$ into the thick part $D^{-1}([0,L_2])$ of $X$ (and which enjoys several additional properties that we do not describe here,
see \cite[Definition 5.42]{GM}).
Finally, $c_{x_0,x_1,x_2}$ is a finite sum of terms of the form $Q(\ddot{\psi}(v),\ddot{\psi}(w))$, where $v,w$ are consecutive vertices of a thick subpicture of
${\rm Skel}(\psi)$ (see~\cite[Definition 6.8]{GM}).

Let us now prove \eqref{maxDz}. The explicit description of $\overline{z}(x_0,x_1,x_2)$ implies that, in order to bound  $\max D(\supp(\overline{z}(x_0,x_1,x_2)))=\max D(\supp(c_{x_0,x_1,x_2}))$, it is sufficient to bound 
$\max D(\supp(Q(\ddot{\psi}(v),\ddot{\psi}(w))))$, where $v,w$ are consecutive vertices of a thick subpicture. However, \cite[Proposition 5.43]{GM} implies
that, if $\gamma$ is any geodesic joining $\ddot{\psi}(v),\ddot{\psi}(w)$, then $\gamma$ does not intersect any $(L_1+L_2)$-horoball. Moreover, 
\cite[Proposition 6.5]{GM} implies that $\supp(Q(\ddot{\psi}(v),\ddot{\psi}(w)))$ is contained in the $(K+25\delta+9)$-neighborhood
of $\gamma$, so that $\max D(\supp(Q(\ddot{\psi}(v),\ddot{\psi}(w))))\leq L_1+L_2+K+25\delta+9\leq 2L_2$. This implies \eqref{maxDz}.

We are finally left to prove \eqref{vicini}. We first show that 
\begin{equation*}
\supp (\overline{z}(x_0, x_1, x_2)) \subseteq \mathcal N_{S_1}(\gamma(x_0,x_1)\cup \gamma(x_1,x_2)\cup\gamma(x_2,x_0))
\end{equation*}
for a suitably chosen universal constant $S_1$.
Indeed, as observed in the first paragraph of the proof of~\cite[Proposition 5.43]{GM}, if $v,w$ are two consecutive vertices of a thick subpicture of  ${\rm Skel}(\psi)$,
then either $d(\ddot{\psi}(v),\ddot{\psi}(w))=1$ or $v$, $w$ both lie on the same side of $\partial \Delta^2$. In the former case
$Q(\ddot{\psi}(v),\ddot{\psi}(w))=[v,w]$, which is supported in the $1$-neighborhood of $p_{x_0,x_1}\cup p_{x_1,x_2}\cup p_{x_2,x_0}$, which in turn is supported in
the $(K+12\delta+9)$-neighborhood of $\g(x_0,x_1)\cup \g(x_1,x_2)\cup \g(x_2,x_0)$ for any geodesic $\g(x_i,x_j)$ between $x_i$ and $x_j$ (see~\cite[Corollary 5.12]{GM}). In the latter case,
suppose that $\ddot{\psi}(u),\ddot{\psi}(v)$ lie on the preferred path $p_{x_i,x_j}$.
Let $p(\ddot{\psi}(v),\ddot{\psi}(w))$ be the subpath 
of $p_{x_i,x_j}$
with endpoints $\ddot{\psi}(v),\ddot{\psi}(w)$, and let $\gamma(\ddot{\psi}(v),\ddot{\psi}(w))$
be any geodesic with the same endpoints. Finally, let $\g(x_i,x_j)$ be any geodesic joining $x_i$ with $x_j$. 
By~\cite[Proposition~ 6.5]{GM}, the chain $Q(\ddot{\psi}(v),\ddot{\psi}(w))$ is supported in the
$(K+25\delta+9)$-neighborhood of $\gamma(\ddot{\psi}(v),\ddot{\psi}(w))$.
By~\cite[Corollary 5.13]{GM},
$p(\ddot{\psi}(v),\ddot{\psi}(w))$ is a quasi-geodesic with uniformly bounded quasi-geodesicity constants, so by hyperbolicity of $X$
there exists a universal constant $S'$ such that the Hausdorff distance between $\gamma(\ddot{\psi}(v),\ddot{\psi}(w))$ and $p(\ddot{\psi}(v),\ddot{\psi}(w))$ 
is bounded by $S'$. Finally, \cite[Corollary 5.12]{GM} ensures that $p_{x_i,x_j}$, whence $p(\ddot{\psi}(v),\ddot{\psi}(w))$, is contained in the $(K+12\delta+9)$-neighborhood
of $\g(x_i,x_j)$. Summing up, we have that the support of $Q(\ddot{\psi}(v),\ddot{\psi}(w))$ is contained in the $S_1$-neighborhood
of $\gamma(x_0,x_1)\cup \gamma(x_1,x_2)\cup\gamma(x_2,x_0)$, where $S_1=S'+2K+37\delta+18$. This concludes the proof that
\begin{equation*}
\supp (\overline{z}(x_0, x_1, x_2)) \subseteq \mathcal N_{S_1}(\gamma(x_0,x_1)\cup \gamma(x_1,x_2)\cup\gamma(x_2,x_0))\ .
\end{equation*}

Recall now from \cite[Proposition 6.5]{GM} that 
\begin{align*}
\supp (Q(\partial (x_0,x_1,x_2))) & \subseteq \mathcal{N}_{K+25\delta+9}(\gamma(x_0,x_1)\cup \gamma(x_1,x_2)\cup \gamma(x_2,x_0))\\ &\subseteq \mathcal{N}_{S_1}(\gamma(x_0,x_1)\cup \gamma(x_1,x_2)\cup \gamma(x_2,x_0))\  ,
\end{align*}
so from 
%containment~\eqref{support:eq} and the fact that
$$
\overline{w}(x_0,x_1,x_2)=Q(\partial (x_0,x_1,x_2))-\overline{z}(x_0,x_1,x_2)
$$
we now readily deduce that also
$$
\supp (\overline{w}(x_0, x_1, x_2)) \subseteq \mathcal N_{S_1}(\gamma(x_0,x_1)\cup \gamma(x_1,x_2)\cup\gamma(x_2,x_0))\ .
$$
This concludes the proof of item \eqref{vicini}.
\end{proof}

\subsection{Rips complexes on cusped graphs}
We are now interested in proving some results about fillings of cycles in relatively hyperbolic groups. It is well known that hyperbolic groups may be characterized
as those groups which satisfy a linear isoperimetric inequality, and an analogous characterization also holds for relatively hyperbolic groups, provided that
fillings are replaced by suitably defined \emph{relative} fillings. Classical isoperimetric inequalities usually deal with fillings of $1$-cycles via $2$-chains,
and in order to provide group-theoretic definitions of length and area it is usually sufficient to take generators and relations as unitary segments
and as  tiles of unitary area, respectively. However, in our argument we also need higher dimensional isoperimetric inequalities, which are better
stated in the context of higher dimensional
 complexes. To this aim it is often useful to consider Rips complexes (over augmented Cayley graphs, in our case of interest).

\begin{defn}\label{defn:rips} Given a graph $G$ and a parameter $1 \le \kappa \in \N$, the \emph{Rips complex $ \mathcal R_\kappa(G)$ on $G$} is the 
simplicial complex having $G^{(0)}$ as set of vertices, and an $n$--dimensional simplex for every $(n+1)$-tuple of vertices whose diameter in $G$ is at most $\kappa$.
\end{defn}

Let now $X$ be the cusped graph associated to the relative hyperbolic pair $(\G,H)$, as in the previous subsections. We fix a constant $\kappa\geq 4\delta+6$, where $\delta\in\mathbb{N}$ is a hyperbolicity constant for
$X$, and we set
$$
\calX=\calR_\kappa(X)\ .
$$

It is well known that, for $\kappa\geq 4\delta+6$, the Rips complex $\mathcal R_\kappa(G)$ of a $\delta$-hyperbolic graph is contractible (see for example \cite[3.$\Gamma$.3.23]{BriHae}). Therefore, we have the following:

\begin{prop} \label{i Rips sono contraibili} The simplicial complex $\calX$ is contractible.
\end{prop}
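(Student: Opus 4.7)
The plan is to prove that $\calX$ is contractible by showing that every finite simplicial subcomplex of $\calX$ is contained in a subcomplex that can be contracted to a basepoint, using a simplicial ``flow toward a basepoint'' construction that exploits the hyperbolicity of $X$.

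First I would fix a basepoint $v_0 \in X^{(0)}$ and stratify the vertex set by the spheres $S_r = \{v \in X^{(0)} : d_X(v_0,v) = r\}$. For every vertex $v \neq v_0$, I choose a neighbor $\pi(v) \in S_{d(v_0,v)-1}$ lying on some geodesic from $v$ to $v_0$. The key step is a purely metric lemma: if $v,w$ lie in the same sphere $S_r$ and $d_X(v,w) \leq \kappa$, then $d_X(\pi(v),\pi(w)) \leq \kappa$ as well. This is precisely where the choice $\kappa \geq 4\delta+6$ enters, via the standard thin-quadrilateral argument: geodesics from $v$ and $w$ to $v_0$ fellow-travel near their common endpoint up to an additive error controlled by $\delta$, so a one-step retraction toward $v_0$ changes pairwise distances by at most $2\delta + \text{const}$. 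The same estimate shows more generally that if $\{u_0,\ldots,u_n\}$ spans a simplex of $\calX$ and $S \subseteq \{u_0,\ldots,u_n\}$ consists of vertices at maximal distance $r$ from $v_0$, then $\{u_0,\ldots,u_n\} \cup \pi(S)$ still has diameter at most $\kappa$ in $X$, hence spans a simplex of $\calX$.

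From this combinatorial lemma I construct an explicit simplicial homotopy. Given any finite subcomplex $K \subseteq \calX$, let $r$ be the maximum of $d_X(v_0, v)$ over vertices $v$ of $K$. For each simplex $\sigma$ of $K$ with a vertex in $S_r$, the lemma provides a ``prism'' simplex in $\calX$ allowing me to homotope those vertices of $\sigma$ lying in $S_r$ to their images under $\pi$, while keeping the other vertices fixed. Doing this simultaneously and compatibly on all top-distance vertices yields a homotopy of $K$ inside $\calX$ to a subcomplex $K'$ with maximum distance at most $r-1$. Iterating $r$ times contracts $K$ to the basepoint $\{v_0\}$ inside $\calX$.

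Since every finite subcomplex of $\calX$ is contractible inside $\calX$, every continuous map $S^n \to \calX$, having compact image contained in some finite subcomplex, is null-homotopic; thus $\calX$ is weakly contractible. Being a CW complex, it is then contractible by Whitehead's theorem. The main obstacle in the argument is the metric lemma controlling distances after the projection $\pi$, which is where one must carefully balance the parameter $\kappa$ against the hyperbolicity constant $\delta$; the rest of the proof is a formal assembly. As noted in the excerpt, the full argument is standard and written out in \cite[III.$\Gamma$.3.23]{BriHae}, so I would be content to invoke it.
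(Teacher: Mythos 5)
Your proposal is correct and matches the paper, whose entire proof is the citation to \cite[III.$\Gamma$.3.23]{BriHae} that you invoke at the end; the sketch you give is indeed the standard argument behind that reference. One small caveat if you wanted to write it out in full: the one-step retraction lemma as you state it (same sphere $S_r$, $d(v,w)\le\kappa$ implies $d(\pi(v),\pi(w))\le\kappa$) does not quite close up with those exact constants, which is why the Bridson--Haefliger proof moves the outermost vertices in jumps of size roughly $\kappa/2$ rather than one edge at a time.
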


The notion of horoball easily carries over to $\calX$
as follows:

\begin{defn}
An ($n$--)horoball of $\calX$ is a full subcomplex of $\calX$ having the same vertices as an ($n$--)horoball of $X$.
\end{defn}

The maximal and the minimal depth of a chain 
$c\in C_n^{\Delta}(\mathcal{X})_\red$ are defined exactly as we did for $X$. 
%for every $n$-simplex in $\mathcal{X}$ we have
%$\max D(\sigma)=\max \{D(v) ,\, v\in \supp(\sigma)\}$,  
% $\min D(\sigma)=\min \{D(v) ,\, v\in \supp(\sigma)\}$ if there exists a horoball containing all the vertices of $\sigma$, and
%$\min D(\sigma)=-\infty$ otherwise. Then
%\begin{align*}
%\max D(c) &= \max \{D(\sigma),\, \sigma\ {\rm appears\ in}\ c\}=\max \{D(v) ,\, v\in \supp(c)\}\ ,\\
%\min D(c) &= \min \{D(\sigma),\, \sigma\ {\rm appears\ in}\ c\} . 
%\end{align*}

Observe that, since $\kappa\geq 1$, the graph $X$ is naturally a subcomplex of $\calX$.
We stress the fact that, when we refer to the distance in $\calX$, we will always refer to the restriction of the distance of $X$ to the vertices of $\calX$: we will be never interested
in defining a metric on the internal part of $i$-simplices of $\calX$, $i\geq 1$, or in understanding the path metric associated to the structure of $\calX$ as a simplicial complex.
In particular, if $A$ is any subcomplex of $\calX$, then we denote by $\mathcal{N}_S(A)$ the full subcomplex of $\calX$ whose vertices lie at distance (in $X$) at most $S$ 
from the set of vertices of $A$.

The isometric action of $\G$ on $X$ induces a simplicial action of $\G$ on $\calX$.
As a consequence, each $C_n^{\Delta}(\calX)_\red$, $n\in\mathbb{N}$, is endowed with the structure of a \emph{normed $\G$-module} (i.e.~a normed space equipped with an isometric $\Gamma$--action).

The first author constructed in \cite{Franceschini2} fillings of cycles in $Z_k(\calX)$ with good properties:

\begin{thm} [{\cite[Theorem 5.6]{Franceschini2}}] \label{cicli in quasi grafi} Let $n,\,k,\,S \in \N$, $k \ge 1$.
Then there exists $S' = S'(n, k, S) \in \N$ such that, for every cycle $a \in Z_k^{\Delta}( \mathcal X)_\red$ and every family of geodesic segments 
$\alpha_1$, \ldots, $\alpha_n$ such that $\supp (a) \subseteq \mathcal N_S (\alpha_1 \cup \ldots \cup \alpha_n)$, there exists $b \in C_{k+1}^{\Delta}(\mathcal X)_\red$ with $ \partial b = a$ such that
\begin{enumerate}
\item $ \supp (b) \subseteq \mathcal{N}_{S'} (\supp (a))$ (in particular, $\supp(b)\subseteq \mathcal{N}_{S+S'}(\alpha_1\cup\ldots\cup\alpha_n)$),
\item $\|b\|_1 \le M(n, k, S,\max D(z)) \|a\|_1$,
\item if $\supp(a)$ is contained in a $(2\delta)$--horoball, then $\supp(b)$ is contained in the same  $(2\delta)$--horoball.\end{enumerate}
\end{thm}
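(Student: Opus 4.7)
The plan is to argue by induction on $k \ge 1$, using the hyperbolicity of $X$ and the structure of the Groves--Manning bicombing $Q$ from Theorem~\ref{main theorem GM} to build fillings locally and glue them, while choosing the cone points inside $\supp(a)$ itself in order to get condition~(1).

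\emph{Base case $k=1$.} A reduced $1$-cycle $a$ in $\calX$ sits inside the $1$-skeleton of $\calX$, which contains $X$ as a subcomplex. Applying $Q$ to each $1$-simplex appearing in $a$ yields a $2$-chain whose boundary is $a$ and whose support lies in the $S_1$-neighborhood of any geodesic spine joining endpoints of $1$-simplices (by property~\eqref{vicini}). However, condition~(1) asks for the filling to lie near $\supp(a)$ itself; to achieve this, choose a maximal $\kappa$-separated subset $\{v_j\} \subseteq \supp(a)$, cover $\supp(a)$ by balls $B_{v_j}$ of radius $\kappa$ (each contained in a single $\kappa$-simplex of $\calX$), decompose $a = \sum_j a_j$ with $\supp(a_j) \subseteq B_{v_j}$, and cone each $a_j$ from $v_j$. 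The interface $1$-cycles arising from overlaps have uniformly bounded length and can be filled by $Q$ again, contributing boundedly to the norm. Horoball containment (condition~(3)) follows from items~\eqref{Q1}, \eqref{Q2}, \eqref{2edge} of Theorem~\ref{main theorem GM} together with convexity of $(2\delta)$-horoballs in $X$ (\cite[Lemma~3.26]{GM}).

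\emph{Inductive step $k \ge 2$.} Assume the statement in degree $k-1$ with constants $S'_{k-1}$ and $M_{k-1}$. Cover $\supp(a)$ by balls $B_{v_j} = B_\kappa(v_j)$ centered at a maximal $\kappa$-separated subset $\{v_j\}$ of $\supp(a)$; each $B_{v_j}$ is a single $\kappa$-simplex of $\calX$, hence contractible via the cone from $v_j$. Split $a = \sum_j a_j$ with $\supp(a_j) \subseteq B_{v_j}$, and observe that the interface discrepancies $\partial a_j$ restricted to overlaps assemble into $(k-1)$-cycles $c_j$ supported in $\mathcal N_{2\kappa}(\supp(a))$ (and in the same $(2\delta)$-horoball if $\supp(a)$ is). Apply the inductive hypothesis to fill each $c_j$ by a $(k-1)$-chain $d_j$ with support in $\mathcal N_{S'_{k-1}}(\supp(a))$ and norm bounded by $M_{k-1}\|c_j\|_1$. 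Then the cone from $v_j$ over $a_j + d_j$ is a $(k+1)$-chain $b_j$ supported in the $\kappa$-neighborhood of $\supp(a)\cup \supp(d_j)$, and $b = \sum_j b_j$ satisfies $\partial b = a$, $\supp(b) \subseteq \mathcal N_{S'_{k-1} + O(\kappa)}(\supp(a))$, and $\|b\|_1 \le M_k \|a\|_1$ with $M_k$ depending on $n, k, S$ and $\max D(a)$.

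\emph{Main obstacle.} The principal difficulty is imposing condition~(1) while preserving the linear norm bound~(2). A direct use of $Q$ would produce fillings close to the geodesic spine $\alpha_1 \cup \ldots \cup \alpha_n$, not close to $\supp(a)$; coning from vertices of $\supp(a)$ fixes this but forces the number of cone points to be comparable to $\|a\|_1$. The cost per cone point is controlled by the local complexity of $\calX$, i.e.\ by the number of vertices in a $\kappa$-ball. In the thick part this is uniformly bounded, but inside a horoball at depth $d$ such a ball can contain up to $\sim 2^{\kappa d}$ vertices, which is precisely why the constant $M$ in the statement is allowed to depend on $\max D(a)$. One finally checks that the inductive recursion $S'_k = S'_{k-1} + O(\kappa)$ closes in finitely many steps since $k$ is fixed, and that the horoball-preservation property propagates through the induction because the cone points $v_j$ and the inductive fillings $d_j$ all remain inside the original horoball by convexity.
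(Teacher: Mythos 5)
First, a point of reference: the paper does not prove this statement at all --- it is imported verbatim from \cite[Theorem 5.6]{Franceschini2} --- so there is no internal proof to compare yours against; your attempt has to stand on its own.

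On the merits, the cover-and-cone induction has two structural gaps. (i) The gluing does not close up. Writing $a=\sum_j a_j$, each $\partial a_j$ is a $(k-1)$-cycle and $\sum_j\partial a_j=0$, but if you fill each one by a $k$-chain $d_j$ with $\partial d_j=-\partial a_j$ and set $b=\sum_j \mathrm{Cone}_{v_j}(a_j+d_j)$, then $\partial b=\sum_j(a_j+d_j)=a+\sum_j d_j$, and nothing forces $\sum_j d_j=0$. Making the interface terms cancel in pairs requires a nerve/double-complex bookkeeping together with uniformly controlled acyclicity of all multi-fold overlaps of the cover; that is exactly the hard content and it is absent. (ii) More fatally, the cones are not defined in $\calX$: one can cone a chain from $v_j$ only if every vertex of its support is within $\kappa$ of $v_j$ (note also that a radius-$\kappa$ ball has diameter up to $2\kappa$, so it is not a simplex of $\calR_\kappa(X)$ to begin with), whereas your inductive filling $d_j$ is only supported in $\mathcal N_{S'_{k-1}}(\supp(c_j))$ with $S'_{k-1}$ a priori much larger than $\kappa$; the recursion $S'_k=S'_{k-1}+O(\kappa)$ therefore cannot be run inside the fixed Rips complex. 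Symptomatically, your argument never uses the hypothesis $\supp(a)\subseteq\mathcal N_S(\alpha_1\cup\ldots\cup\alpha_n)$, nor the hyperbolicity of $X$ beyond contractibility, even though $S'$ and $M$ are allowed to depend on $n$ and $S$: as written it would prove a strictly stronger statement for arbitrary cycles, which should have been a warning sign. Controlled fillings in Rips complexes over (relatively) hyperbolic spaces are obtained by exploiting thinness of polygons over the geodesic spine and bicombing-type constructions, not by naive coning; relatedly, the base case is garbled, since the bicombing $Q$ outputs $1$-chains, not $2$-chains, so ``applying $Q$ to each $1$-simplex of $a$'' does not produce a filling.
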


\begin{defn}  Take $z \in C_{k}^{\Delta}( \mathcal X)_\red$. We say that a chain $a \in C_{k+1}^\Delta( \mathcal X)_\red$ is a \emph{relative filling} of $z$ if
$$
z=\partial a + c\ ,
$$
where $c$ is a chain in $ C_{k}^{\Delta}( \mathcal X)_\red$ with $\min D(c)\geq 0$ (i.e.~each simplex appearing in $c$ is contained in some $0$-horoball).
\end{defn}

We will now use Franceschini's  result to construct a relative filling of the bicombing $Q$ described above.

\begin{prop} \label{ofeuihwln} There exist constants $T_2,T_3 \in \R$ and 
a $\Gamma$--equivariant map $\varphi \: X^3 \to C_2^{\Delta}(\mathcal X)_\red$ such that,
%$ \varphi({}x_0, {}x_1, {}x_2)$ is a relative filling of $Q( \partial ({}x_0, {}x_1, {}x_2))$ for any $({}x_0, {}x_1, {}x_2) \in X^3$. Moreover, 
%there is a constant $T \in \R$ such that, 
for any triple $({}x_0, {}x_1, {}x_2)$ of vertices in $ X$: 
\begin{enumerate}
\item  if $[x_i,x_j]$ is an edge of $X$  and $D(x_i)=0$ for every $i,j\in\{0,1,2\}$, then $ \varphi({}x_0, {}x_1, {}x_2)=[x_0,x_1,x_2]$,         
\item if $x_0,x_1,x_2$ belong to a 0-horoball $\mathcal H$ of $\calX$ and $d_\mathcal{H}(x_i,x_j)\leq 2$ for every $i,j\in\{0,1,2\}$, 
        then $ \supp(\varphi({}x_0, {}x_1, {}x_2))\subseteq\mathcal H$,
        \item $\|\varphi ({}x_0, {}x_1, {}x_2)\|_1 \le T_2$,
 \item the chain $\varphi (\partial ({}x_0, {}x_1, {}x_2, {}x_3))$ admits a relative filling ${}B\in  C_{3}^{\Delta}( \mathcal X)_\red $ such that $\|B\|_1 \le T_3$.
\end{enumerate}
\end{prop}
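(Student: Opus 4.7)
My plan is to build $\varphi(x_0,x_1,x_2)$ as a filling of the shallow $1$--cycle $z(x_0,x_1,x_2)\in Z_1^\Delta(\mathcal X)_\red$ provided by Theorem~\ref{main theorem GM}. Properties~\eqref{vicini}, \eqref{maxDz} and~\eqref{normT1} tell us that $z(x_0,x_1,x_2)$ has $\ell^1$--norm at most $T_1$, maximum depth at most $2L_2$, and support in the $S_1$--neighbourhood of a geodesic triangle joining $x_0,x_1,x_2$. Applying Theorem~\ref{cicli in quasi grafi} with $n=3$, $k=1$, $S=S_1$ and the depth bound $2L_2$ therefore produces a $2$--chain filling of $z(x_0,x_1,x_2)$ with $\ell^1$--norm bounded by a universal constant $T_2$, whose support stays in a uniform neighbourhood of the same geodesic triangle. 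To enforce $\G$--equivariance I fix a transversal for the diagonal $\G$--action on $X^3$ (possible because $\G$ acts freely on $X^{(0)}$), define $\varphi$ on this transversal by the above recipe, and extend equivariantly.

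The two special cases are handled by prescribing the filling directly. In case~(1), one has $z(x_0,x_1,x_2)=\partial[x_0,x_1,x_2]$ in $C_1^\Delta(\mathcal X)_\red$ because $\kappa\geq 1$ makes $[x_0,x_1,x_2]$ a genuine $2$--simplex of $\mathcal X$, so I simply set $\varphi(x_0,x_1,x_2)=[x_0,x_1,x_2]$. In case~(2), property~\eqref{2edge} shows that $\supp(z(x_0,x_1,x_2))$ lies in a bounded region of a single $0$--horoball $\mathcal H$, and a filling inside $\mathcal H$ can be selected using that combinatorial horoballs are simply connected with linear isoperimetric behaviour. Both prescriptions are chosen so as to be compatible with the equivariant extension above, by including one representative of each such special orbit in the transversal.

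For property~(4), set $y=\varphi(\partial(x_0,x_1,x_2,x_3))$. By construction $\partial\varphi(x_i,x_j,x_k)=z(x_i,x_j,x_k)$, so using $\partial\circ\partial=0$ together with property~\eqref{sumzw} gives
$$
\partial y \;=\; z(\partial(x_0,x_1,x_2,x_3)) \;=\; -w(\partial(x_0,x_1,x_2,x_3)).
$$
By~\eqref{minDw}, this $1$--cycle sits at depth at least $L_2\gg 2\delta$, so it decomposes as a sum of cycles each supported in a single $(2\delta)$--horoball (once depths are large enough, distinct horoballs are pairwise far apart in $X$). Filling each piece inside its horoball via Theorem~\ref{cicli in quasi grafi}(3) yields a $2$--chain $c$ with $\partial c=\partial y$, $\min D(c)\geq 0$, and norm bounded linearly in $\|\partial y\|_1\leq 4T_1$. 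Then $y-c$ is an absolute $2$--cycle supported in a uniform neighbourhood of a geodesic tetrahedron, so a final application of Theorem~\ref{cicli in quasi grafi} in dimension~$2$ produces $B\in C_3^\Delta(\mathcal X)_\red$ with $\partial B=y-c$ and $\|B\|_1\leq T_3$, giving the required relative filling $y=\partial B+c$.

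The \emph{main delicate point} is uniformity of constants: it is crucial that the bounds on norm, support and depth of $z$ and $w$ in Theorem~\ref{main theorem GM} are independent of $(x_0,\ldots,x_3)$, so that $\max D(z)\leq 2L_2$ in Step~1, and the horoball--supported $1$--cycles in Step~3 admit horoball--internal fillings with constants independent of their depth (which is built into Theorem~\ref{cicli in quasi grafi}(3)). A secondary subtlety is ensuring the equivariant definition on the transversal is consistent with the prescribed fillings in cases~(1) and~(2); this is arranged by including representatives of the corresponding $\G$--orbits into the transversal and using the prescribed fillings on them before extending equivariantly.
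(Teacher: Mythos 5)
Your argument follows the paper's proof essentially verbatim: $\varphi$ is defined on a $\G$--transversal of $X^3$ as a Theorem~\ref{cicli in quasi grafi}--filling of the shallow cycle $z(x_0,x_1,x_2)$ and extended equivariantly, the two special cases are prescribed by hand, and property~(4) is obtained from the identity $z(\partial(x_0,\ldots,x_3))=-w(\partial(x_0,\ldots,x_3))$ by first filling this deep $1$--cycle inside the horoballs and then filling the resulting absolute $2$--cycle. The only cosmetic divergence is in case~(2), where the paper simply cones $z$ off from $x_0$ inside the Rips complex (using that $\supp(z)\subseteq B(x_0,3)\cap\mathcal H$ and $\kappa\geq 4$) rather than appealing to isoperimetric properties of combinatorial horoballs.
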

\begin{proof} 
In order to get an equivariant map, we first
define $\varphi$ on a set of representatives for
the action of $\G$ on $X^3$, and then we extend $\varphi$ equivariantly.  Since $\G$ acts by isometries on $X$ and leaves the depth of points invariant, it is clear that this choice
is coherent with requirements (1) and (2) of the statement. 

%For every $(x_0,x_1,x_2)\in X^3$ we are going to define two $1$-chains $\widetilde{z}(x_0,x_1,x_2),\widetilde{w}(x_0,x_1,x_2)\in C_1^\Delta(\mathcal{X})_\red$
%and the desired $2$-chain $\varphi(x_0,x_1,x_2)\in C_2^\Delta(\mathcal{X})_\red$ in such a way that the following conditions hold:
%\begin{enumerate}
%\item[(a)]
%$Q(\partial(x_0,x_1,x_2))=\widetilde{z}(x_0,x_1,x_2)+\widetilde{w}(x_0,x_1,x_2)$;
%\item[(b)]
%$\partial\varphi(x_0,x_1,x_2)=\widetilde{z}(x_0,x_1,x_2)$;
%\item[(c)] 
%$\max D(\widetilde{z}(x_0,x_1,x_2))\leq 2L_2$;
%\item[(d)]
%$\min D(\widetilde{w}(x_0,x_1,x_2))\geq L_2$.
%\end{enumerate}
%Of course, in order to fulfill these requirements we could just apply Theorem~\ref{main theorem GM}, but that choice would not allow us to prove items (1) and (2)
%of the statement, for which some more accurate choice is in order.

Let us fix an element $(x_0,x_1,x_2)$ in the fixed set of representatives. 
We set
$$
z=z(x_0,x_1,x_2)\, ,\quad 
w=w(x_0,x_1,x_2)\ ,
$$
where $z(x_0,x_1,x_2)$ and $w(x_0,x_1,x_2)$ are the cycles provided by
 Theorem \ref{main theorem GM}. We will define  $\varphi(x_0,x_1,x_2)$ as a suitably chosen filling of $z$.

We first take care of the cases described in items (1) and (2). Suppose 
that  $[x_i,x_j]$ is an edge of $X$ and $D(x_i)=0$ for every $i,j\in\{0,1,2\}$. 
By Theorem~\ref{main theorem GM} \eqref{Q1}, 
we have $z=[x_0,x_1] + [x_1,x_2] + [x_2,x_0]$, and we just set $\varphi(x_0,x_1,x_2)=[x_0,x_1,x_2]$.
%, and this choice fulfills requirements (1), (2), (3) of the statement (with $T=1$). 
%Moreover, if $\widetilde{z}(x_0,x_1,x_2)=Q(\partial (x_0,x_1,x_2))$ and $\widetilde{w}(x_0,x_1,x_2)=0$, then also
%conditions (a), (b), (c), (d) are satisfied.

Suppose now $x_0,x_1,x_2$ belong to a 0-horoball $\mathcal H$ of $\calX$ and $d_\mathcal{H}(x_i,x_j)\leq 2$ for every $i,j\in\{0,1,2\}$. By claim \eqref{2edge} of Theorem~\ref{main theorem GM}, if $z=\sum_i \lambda_i [x'_i,x''_i]$, then $x_i',x_i''\in 
B(x_0,3)\cap\mathcal{H}$ for every $i$.
Since $\mathcal{X}=\mathcal{R}_\kappa (X)$ and $\kappa\geq 4$, 
this implies that 
$\{x_0,x_i',x_{i}''\}$ is the set of vertices of a simplex of $\calX$. Therefore, the sum $\varphi(x_0,x_1,x_2)=\sum_{i}\lambda_i [x_0,x_i',x_{i}'']$ 
defines an element in $C_2^{\Delta}(\mathcal{X})_\red$
supported on $\mathcal H$  such that $\partial \varphi(x_0,x_1,x_2)=z$ and $\|\varphi(x_0,x_1,x_2)\|_1=\|z\|_1\leq T_1$.
%Let us now distinguish two cases: if $D(x_i)\geq L_2+3$ for every $i=0,1,2$, then we set
%$\varphi(x_0,x_1,x_2)=0$, $\widetilde{z}(x_0,x_1,x_2)=0$, $\widetilde{w}(x_0,x_1,x_2)=Q(\partial (x_0,x_1,x_2))$; otherwise, we set
%$\varphi(x_0,x_1,x_2)=\sum_{i\in\mathbb{Z}_k} [y_0,y_i,y_{i+1}]$, $\widetilde{z}(x_0,x_1,x_2)=Q(\partial (x_0,x_1,x_2))$ and $\widetilde{z}(x_0,x_1,x_2)=0$.
%It is immediate to check that also in this case conditions (a), (b), (c), (d) are satisfied. 

Suppose now that the triple $(x_0,x_1,x_2)$ does not satisfy the conditions described in items (1) and (2).
By Theorem \ref{main theorem GM} \eqref{vicini}, $\supp (z)$ is contained in the  %$M$--
$S_1$--neighborhood of the union
$\gamma(x_0,x_1)\cup\gamma(x_1,x_2)\cup \gamma(x_2,x_0)$, where $\gamma(x_i,x_j)$ is any geodesic joining $x_i$ with $x_j$
(and $S_1$ 
does not depend on $(x_0, x_1, x_2)$). 
Moreover, $\max D(z)\leq 2L_2$ and $\|z\|_1\leq T_1$ 
because of \eqref{maxDz} and \eqref{normT1} of Theorem \ref{main theorem GM}. 
Hence by Theorem \ref{cicli in quasi grafi} there exists a chain $\varphi(x_0, x_1, x_2)\in C_2^{\Delta}(\mathcal{X})_\red$ 
such that $\partial\varphi(x_0,x_1,x_2)=z$ and $\|\varphi(x_0,x_1,x_2)\|_1\leq T_2$, where $T_2=M(3,1,S_1,2L_2) \cdot T_1$. This concludes
the proof of (1), (2) and (3).
Also observe that, if $S_1'=S'(3,1,S_1)+S_1$, then by Theorem~\ref{cicli in quasi grafi} (1) 
$$
\supp(
\varphi(x_0,x_1,x_2))\subseteq  \mathcal{N}_{S'_1}(\gamma(x_0,x_1)\cup\gamma(x_1,x_2)\cup \gamma(x_2,x_0))\ ,$$ where $\gamma(x_i,x_j)$ is any geodesic joining $x_i$ with $x_j$.

We now construct the relative filling $B$ of $\varphi (\partial ({}x_0, {}x_1, {}x_2, {}x_3))$ required to prove claim (4). Since $\varphi (\partial ({}x_0, {}x_1, {}x_2, {}x_3))$ is not a cycle, we need to find first a chain $c$ supported in the horoballs and satisfying $\partial c=\partial \varphi (\partial ({}x_0, {}x_1, {}x_2, {}x_3))$.  For the sake of conciseness, we will denote by $z$ and $w$ also the linear extensions of $z$ and $w$ over linear combinations of triples in $X^3$, so
that, for example, $z(\partial (x_0,\ldots,x_3))=\sum_{i=0}^3 (-1)^i z(x_0,\ldots,\widehat{x}_i,\ldots,x_3)$.
Let us fix $(x_0,\ldots,x_3)\in X^4$.
Since $Q\circ\partial=z+w$ and $Q\circ\partial\circ\partial =0$,  we have
$$
{z}(\partial (x_0,\ldots,x_3))=-{w}(\partial (x_0,\ldots,x_3))\ .
$$
Therefore, claims \eqref{minDw} and \eqref{maxDz} of Theorem~\ref{main theorem GM} imply that $$\max D({z}(\partial (x_0,\ldots,x_3)))\leq 2L_2\ ,$$  
$$\min D({z}(\partial (x_0,\ldots,x_3)))=\min D({w}(\partial (x_0,\ldots,x_3)))\geq L_2\ .$$ Moreover, by Theorem~\ref{main theorem GM} \eqref{normT1} we have
$$
\|{z}(\partial (x_0,\ldots,x_3))\|_1=\left\|\sum_{i=0}^3 (-1)^i {z}(x_0,\ldots,\widehat{x}_i,\ldots,x_3)\right\|_1\leq 4 T_1.
$$
Also observe that, by Theorem~\ref{main theorem GM} \eqref{vicini}, we have
$$
\supp({z}(\partial (x_0,\ldots,x_3))\subseteq \mathcal{N}_{S_1}\left(\bigcup_{i,j=0}^3 \gamma(x_i,x_j)\right)\ ,
$$
where $\gamma(x_i,x_j)$ is any fixed geodesic joining $x_i$ with $x_j$.
Since $L_2\geq 2\delta$, Theorem \ref{cicli in quasi grafi} implies that there exists a chain $c$ such that, if $T_1'=4M(6,1,S_1,2L_2)T_1$
and $S_1''=S'(6,1,S_1)+S_1$,
then
$$
\partial c =z(\partial (x_0,\ldots,x_3))\ ,
$$
$$
\supp(c)\subseteq \mathcal{N}_{S_1''}  \left(\bigcup_{i,j=0}^3 \gamma(x_i,x_j)\right)
$$
$$
\|c\|_1\leq M(6,1,S_1,2L_2)\cdot \|z(\partial (x_0,\ldots,x_3))\|_1\leq T_1'\ ,
$$
$$
\min D (c) \geq 2\delta
$$
(in particular, each simplex appearing in $c$ is contained in some horoball). 
Moreover, since $\supp(c)\subseteq \mathcal{N}_{S_1''}(\supp(z(\partial (x_0,\ldots,x_3))))$, 
we also have
$$
\max D(c)\leq \max D(z(\partial (x_0,\ldots,x_3)))+ S_1''\leq 2L_2+S_1''\ .
$$
Let us now consider the chain
$$
a=\varphi( \partial (x_0, x_1, x_2, x_3)) - c\ .
$$
By construction, $\partial a=0$, i.e.~$a$ is a cycle. If $S_1'''=\max \{S_1',S_1''\}$, then
$$\supp(a)\subseteq \mathcal{N}_{S_1'''}  \left(\bigcup_{i,j=0}^3 \gamma(x_i,x_j)\right)\ .$$ Moreover,
$$\max D(a)\leq \max \{D(c),D(z(\partial (x_0,\ldots,x_3)))\}\leq 2L_2+S_1''\ .$$
Let $B$ be the filling of $a$ provided by Theorem~\ref{cicli in quasi grafi}. By construction, 
$\partial B=a=\varphi (\partial (x_0, x_1, x_2, x_3)) - c$, so $B$ is a relative filling of $ \varphi(\partial (x_0, x_1, x_2, x_3))$. Moreover,
\begin{align*}
\|B\|_1& \leq M(6,2,S_1''',2L_2+S_1'')\|a\|_1\\ &\leq M(6,2,S_1''',2L_2+S_1'')\left(\|\varphi( \partial (x_0, x_1, x_2, x_3))\|_1+\|c\|_1\right)\\
& \leq M(6,2,S_1''',2L_2+S_1'')(4T_2+T_1')\ .
\end{align*}
This concludes the proof.
\end{proof}

\section{Combinatorial volume forms}\label{sec:4}
Before going into the proof of Theorem~\ref{volumeform:thm}, let us fix some notation.
Let $\G_0=F(a,b)$ be a free group of rank 2, and let $\psi\colon \G_0\to\G_0$ be a group automorphism induced by a pseudo-Anosov orientation-preserving  homeomorphism of a punctured torus.
Up to conjugating $\psi$, we may suppose that $\psi([a,b])=[a,b]$. Let $\G = \G_0 \rtimes_{\psi} \Z$, 
and denote by $t$ the generator of $\mathbb{Z}<\G$, in such a way that $ t g t^{-1}= \psi(g) $ for every $g\in \G_0$.
Let $H<\G$ be the subgroup generated by $[a,b]$ and $t$, and recall that the pair $(\G,H)$ is relatively hyperbolic. 
We denote by $X$ the cusped graph associated to the pair $(\G,H)$ and the generating set $S = \{a, b, [a,b], t\}$, and by $\calX$ the contractible Rips complex over $X$ defined in the previous section.
In fact, we will completely forget the structure of $X$ as a graph, and we will denote again by $X$ its set of vertices
(while we will make use of the structure of $\mathcal{X}$ as a simplicial complex).

Recall that $\G$ (hence, $\G_0$) acts freely on $X$, so the bounded cohomology of $\G_0$ may be isometrically computed via the complex 
$$C_b^n(\G_0\actson X):=\{\phi:X^{n+1}\to\R,\; \|\phi\|_\infty<\infty\}$$ introduced in Section~\ref{background:sec}. 
%\footnote{non so se il lettore si ricorder? che $\G_0\actson X$ ? definito prendendo il complesso $X^n$ di tutte le $n$--uple, e non il complesso simpliciale associato a $X$..}
For every Lipschitz map $f\colon\mathbb{Z}\to\R$ we are going to construct a $2$-quasi-cocycle $\alpha_f\in \QCa^2({\G_0\actson X})^{\G_0}$. The quasi-cocycle $\alpha_f$   should be understood as a discrete approximation of a primitive of a volume form
on the infinite cyclic covering $M_0=\mathbb{H}^3/\G_0$ of the cusped hyperbolic manifold $M=\mathbb{H}^3/\G$ (here we are identifying $\G$ with its realization as a non-uniform lattice in the isometry group 
of $\mathbb{H}^3$).

Recall that $M_0$ is diffeomorphic to $\Sigma\times \R$, where $\Sigma$ is a once-punctured torus. 
If $\sigma$ is any $2$-simplex in $M_0$, then the evaluation on $\sigma$ of the primitive of a volume form on $M_0$ 
is equal to the volume of the prism spanned by $\sigma$ and by the projection of $\sigma$ on $\Sigma\times\{0\}$. Our construction
in inspired by this remark, yet it is
completely independent from the differential geometric situation just
recalled. We define a projection $p\colon X\to \G_0$ as follows: 
every element $X$ admits a unique expression as a pair $(g_0t^k,n)$ with $g_0\in\G_0$, and we then set
$$
p\colon X\to \G_0\, ,\qquad p(g_0t^k,n)=g_0\ .
$$

\subsection{Heuristic} In this subsection we just discuss the geometric meaning of $p$, the reader may safely skip ahead if the point is clear already.

The projection $p\colon X\to \G_0$ plays the role of the retraction of $M_0$ onto $\Sigma$. When considering $\G$ as a non-uniform hyperbolic lattice, the action of $t$ on
$\widetilde{M}\cong \widetilde{\Sigma}\times \R$ corresponds to the product of the lift of the pseudo-Anosov homeomorphism corresponding to $\psi$ (on $\widetilde{\Sigma}$) with the translation by $1$ (on $\R$).
Via the quasi-isometric identification between $X$ and $\widetilde{M}$, this action translates into the left action of $\G$ on $X$.
Observe now that the the group $\G$ acts on $X$ also on the right as follows:
$$
(g,n)\cdot g'=(gg',n)\ .
$$
This action is not by isometries, and it does not extend to a simplicial action over $X$ and neither over $\calX$. However, from the equality $ t g t^{-1}= \psi(g) $, $g\in\G_0$, we deduce that the right
action of $t$ on $X$ should correspond to the unitary translation on $\widetilde{\Sigma}\times \R$. Whence, the definition of $p$.
% Therefore, it makes sense to define the projection $p\colon X\to \G_0$ as follows: 
% every element $X$ admits a unique expression as a pair $(g_0t^k,n)$ with $g_0\in\G_0$, and we then set
% $$
% p\colon X\to \G_0\, ,\qquad p(g_0t^k,n)=g_0\ .
% $$
\subsection{The combinatorial area form}\label{subsec:area}
In order to compute (signed) volumes, we need
to introduce an orientation on triples in $X^3$. Let us fix a finite-area hyperbolization of $\G_0$, i.e.~a discrete faithful representation $\rho\colon \G_0\to \isom^+(\mathbb{H}^2)$ such that
$\mathbb{H}^2/\rho(\G_0)$ is isometric to a finite-volume once-punctured torus, and denote by $\cdot$ the action of $\G$ on $\partial\mathbb H^2$ induced by $\rho$.
We identify $\partial\mathbb{H}^2$ with the topological boundary of the Poincar\'e disk, and
we say that a triple of pairwise distinct points $(a_0,a_1,a_2)$ in $\partial\mathbb{H}^2$ is \emph{positive} (resp.~\emph{negative})
if $(a_0,a_1,a_2)$ are anti-clockwise (resp.~clockwise) oriented on $\partial \mathbb{H}^2$. Finally, if the points in the triple $(a_0,a_1,a_2)\in (\partial\mathbb{H}^2)^3$
are not pairwise distinct, we say that the triple is \emph{degenerate}.
The element $\rho([a,b])$ is parabolic, so it has a unique fixed point $\overline{q}\in \partial\mathbb{H}^2$.
We then define a map $\varepsilon\colon \G_0^3\to \{-1,0,1\}$ as follows:
$$
\varepsilon(g_0,g_1,g_2)=\left\{\begin{array}{rl} 
                                 1 & \textrm{if}\ (g_0\cdot\overline{q},g_1\cdot\overline{q},g_2\cdot\overline{q})\ \textrm{is\ positive}\\
                                 0 & \textrm{if}\ (g_0\cdot\overline{q},g_1\cdot\overline{q},g_2\cdot\overline{q})\ \textrm{is\ degenerate}\\
                                 -1 & \textrm{if}\ (g_0\cdot\overline{q},g_1\cdot\overline{q},g_2\cdot\overline{q})\ \textrm{is\ negative.}\\
                                \end{array}
\right. 
$$

We extend $\varepsilon$ to a map defined on $X^3$ by setting:
$$
\varepsilon (x_0,x_1,x_2)=\varepsilon (p(x_0),p(x_1),p(x_2))\ .
$$
The following result states that $\varepsilon$ is a $\G$-invariant bounded cocycle:

\begin{prop}\label{epsilon:prop}
 We have
 $$
 \varepsilon\in Z^2_b({\Gamma\actson X})^\G\ .
 $$
 Moreover, if $x_0,x_1,x_2$ all lie in a 0-horoball $\mathcal H$ of $\calX$, then $\varepsilon(x_0,x_1,x_2)=0$.
\end{prop}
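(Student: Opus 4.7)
The plan is to verify the three assertions in turn: boundedness is immediate, the cocycle condition pulls back from the classical orientation cocycle on $\partial\mathbb{H}^2$, $\Gamma$--invariance uses the dynamics of a lift of the pseudo-Anosov $f$, and the vanishing on horoballs is a direct computation with the projection $p$. The main obstacle will be proving invariance under the action of the stable letter $t$.

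First, $\varepsilon$ takes values in $\{-1, 0, 1\}$ by construction, so it is bounded. For the cocycle identity $\delta\varepsilon = 0$, I would invoke the standard fact that the orientation (Euler) cocycle on the circle $\partial\mathbb{H}^2$ is indeed a $2$--cocycle: for any four points $a_0, \ldots, a_3 \in \partial\mathbb{H}^2$, the alternating sum
$$\sum_{i=0}^{3} (-1)^i\, \varepsilon(a_0, \ldots, \widehat{a_i}, \ldots, a_3) = 0,$$
an identity that persists when some of the $a_i$ coincide (a short case analysis). Applying this with $a_i = p(x_i)\cdot\overline{q}$ gives the cocycle identity on $X^4$.

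For $\Gamma$--invariance, a direct computation using $tgt^{-1} = \psi(g)$ for $g\in\Gamma_0$ shows that if $x = (h_0 t^k, n)$ and $g = g_0 t^m$, then
$$p(g^{-1} x) = \psi^{-m}\bigl(g_0^{-1}\, p(x)\bigr).$$
Invariance under $\Gamma_0$ (the case $m = 0$) is immediate because $\rho(g_0^{-1}) \in \isom^+(\mathbb{H}^2)$ preserves the cyclic order on $\partial \mathbb{H}^2$. The hard part is to show that $\psi^{-1}$ also preserves the sign. My plan is to fix a lift $\tilde f\colon\mathbb{H}^2\to\mathbb{H}^2$ of the pseudo-Anosov $f\colon\Sigma\to\Sigma$ inducing $\psi$, which satisfies the equivariance $\tilde f\circ\rho(g) = \rho(\psi(g))\circ\tilde f$ for every $g\in\Gamma_0$. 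Since $f$ is orientation-preserving, the lift $\tilde f$ extends to an orientation-preserving homeomorphism of the circle $\partial\mathbb{H}^2$. Because $\psi([a,b]) = [a,b]$, the map $\tilde f$ commutes with $\rho([a,b])$ and therefore sends the unique parabolic fixed point $\overline{q}$ of $\rho([a,b])$ to itself. Consequently, the equivariance rewrites on the boundary as
$$\rho(\psi^{-1}(h))\cdot\overline{q} = \tilde f^{-1}\bigl(\rho(h)\cdot \overline{q}\bigr),$$
and since $\tilde f^{-1}$ preserves the cyclic order, it preserves $\varepsilon$. Iterating in $m$ and combining with the $\Gamma_0$--step yields the full $\Gamma$--invariance.

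Finally, for the vanishing on $0$--horoballs, I would observe that any vertex of the horoball based on a coset $gH$ (with $g = g_0 t^k$, $g_0\in\Gamma_0$) has the form $(g_0 t^k\cdot [a,b]^j t^l, n)$ for some $j, l\in\mathbb{Z}$, $n\in\mathbb{N}$. Since $\psi$ fixes $[a,b]$, powers of $t$ commute with powers of $[a,b]$, so this rewrites as $(g_0 [a,b]^j\, t^{k+l}, n)$, and hence $p(x) = g_0 [a,b]^{j(x)}$ for some integer $j(x)$. Because $\overline{q}$ is fixed by $\rho([a,b])$, every such $p(x)$ maps $\overline{q}$ to the single boundary point $\rho(g_0)\cdot\overline{q}$. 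Any triple of vertices in the horoball therefore yields a degenerate triple on $\partial\mathbb{H}^2$, giving $\varepsilon(x_0, x_1, x_2) = 0$.
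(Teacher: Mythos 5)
Your proposal is correct and follows essentially the same route as the paper's proof: orientation-preservation of $\rho(\Gamma_0)$ for the $\Gamma_0$-part, the equivariant boundary extension of a lift of the pseudo-Anosov together with the observation that $\psi([a,b])=[a,b]$ forces $\tilde f(\overline q)=\overline q$ for the $t$-part, and the fact that $p$ sends a horoball into a single coset of $\langle[a,b]\rangle$ (whose image of $\overline q$ is a single point) for the vanishing statement. The only difference is cosmetic: you spell out the cocycle identity as a pullback of the orientation cocycle on $\partial\mathbb H^2$, which the paper leaves as "easily checked."
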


\begin{proof}
 The fact that $\varepsilon$ is a cocycle is easily checked. In order to prove that $\varepsilon$ is $\G$-invariant it suffices to check that $g\cdot\varepsilon=\varepsilon$ for every
 $g\in\G_0$, and $t\cdot\varepsilon=\varepsilon$. 

 Let us fix $g\in\G_0$. It readily follows from the definition of $p$ that $p(gx)=gp(x)$ for every $x\in X$. Therefore, for every triple $(x_0,x_1,x_2)\in X^3$ we have
 $$
 \left(p(gx_0)\cdot\overline{q},p(gx_1)\cdot\overline{q},p(gx_2)\cdot\overline{q}\right)=
 \left(gp(x_0)\cdot\overline{q},gp(x_1)\cdot\overline{q}, gp(x_2)\cdot\overline{q}\right)\ ,
$$
and the conclusion easily follows from the fact that  $\rho(g)$ acts on $\partial\mathbb{H}^2$ as an orientation-preserving homeomorphism.
%this implies that the triple
%$\left(p(gx_0)\cdot\overline{q},p(gx_1)\cdot\overline{q},p(gx_2)\cdot\overline{q}\right)$ is positive (resp.~negative, degenerate)
% if and only if the triple 
%$(p(x_0)\cdot\overline{q},p(x_1)\cdot\overline{q}, p(x_2)\cdot\overline{q})$
%is positive (resp.~negative, degenerate),
% and this means exactly that $\varepsilon(gx_0,gx_1,gx_2)=\varepsilon(x_0,x_1,x_2)$, as desired.

 In order to prove invariance with respect to $t$, first observe that the pseudo-Anosov homeomorphism $h\colon \Sigma\to\Sigma$ lifts to a quasi-isometry
 $\widetilde{h}\colon \mathbb{H}^2\to\mathbb{H}^2$ such that
 \begin{equation}\label{equivariance:eq}
 \widetilde{h}\circ \rho(g)=\rho(\psi(g))\circ\widetilde{h}
 \end{equation}
 for every $g\in\G_0$. The quasi-isometry $\widetilde{h}$ continuously extends to $\partial\mathbb{H}^2$, and equation~\eqref{equivariance:eq}
also holds when considering the actions of $\widetilde{h}$ and of $\G_0$ on $\partial\mathbb{H}^2$. In particular, if we set $g=[a,b]$ and we evaluate at $\overline q$
we obtain
 $$
\widetilde{h}(\overline{q})=\widetilde{h}(g\cdot\overline{q})=\psi(g)\cdot(\widetilde{h}(\overline{q}))=g\cdot(\widetilde{h}(\overline{q}))\ ,
 $$
 so $\widetilde{h}(\overline{q})$ is fixed by $g$, and
 $$
 \widetilde{h}(\overline{q})=\overline{q}\ .
 $$
 Now, for every $x=(g_0t^k,n)\in X$ with $g_0\in\G_0$, we have 
 $$
 p(tx)=p(t(g_0t^k,n))=p(tg_0t^k,n)=p(\psi(g_0)t^{k+1},n)=\psi(g_0)=\psi(p(x))\ ,
 $$
 hence
 \begin{equation} p(tx)\cdot\overline{q}=\psi(p(x))\cdot\overline{q} =  \psi(p(x))\cdot\widetilde{h}(\overline{q})=\widetilde{h}(p(x)\cdot\overline{q})\ .  \label{final invariance}  
\end{equation}
 Let us now consider a triple $(x_0,x_1,x_2)\in X^3$. 
 Observe that the trace of $\widetilde{h}$ on $\partial \mathbb{H}^2$ is an orientation-preserving homeomorphism. Therefore,
 the triple
 $
 (p(x_0)\cdot\overline{q}, p(x_1)\cdot\overline{q},p(x_2)\cdot\overline{q})
 $
 is positive (resp.~negative, degenerate) if and only if 
 $
( \widetilde{h}(p(x_0)\cdot\overline{q}), \widetilde{h}(p(x_1)\cdot\overline{q}), \widetilde{h}(p(x_2)\cdot\overline{q}))
 $
 is so. Thanks to~\eqref{final invariance}, this concludes the proof that $\varepsilon$ is $t$-invariant, whence $\G$-invariant.
 
Suppose now that $x_0,x_1,x_2$ all lie in the same horoball $\calH$ of $X$. Then $p(x_0),p(x_1),p(x_2)$ all lie in the same left coset of $\langle[a,b]\rangle$ in
$\G_0$. Using again that $\overline{q}$ is fixed by $\rho([a,b])$, this implies that 
$p(x_0)\cdot\overline{q}=p(x_1)\cdot\overline{q}=p(x_2)\cdot\overline{q}$, so
$\varepsilon(x_0,x_1,x_2)=0$.
\end{proof}

\subsection{The quasi-cocycle associated to a Lipschitz function}\label{sec:qc}
Let us now fix a Lipschitz function 
$$
f\colon \mathbb{Z}\to \R\ .
$$
We are going to define the quasi-cocycle $\alpha_f=\alpha(f)\in \QC_{{\rm alt}}^2(\G\actson X)^{\G_0}$ required in Theorem~\ref{volumeform:thm}.
The decomposition $\G = \G_0 \rtimes_{\psi} \Z$ of $\G$ as a semidirect product defines an epimorphism $\theta\colon \G\to\mathbb{Z}$
given by $\theta(g)=k$, where $g=g_0t^k$ is the unique expression of $g$ such that $g_0\in\G_0$. We extend $\theta$ to the whole of $X$
by setting $\theta(g,n)=\theta(g)$. 

We first define the simplicial cochain $F_f\in C^2_{\Delta,\alt}(\calX)$ such that,
if $\sigma$ is a 
$2$-simplex in $\calX$ with vertices $(x_0,x_1,x_2)\in X^3$, then
$$ \label{definition F_f}
F_f(\sigma) = \varepsilon(x_0,x_1,x_2) \frac{\sum_{i=0}^2 f(\theta(x_i))}{3}\ .  $$

\begin{lemma}\label{estimate:F}
We have $F_f\in C^2_{\Delta,\alt}(\calX)^{\G_0}$. Moreover:
\begin{enumerate}
\item 
$ F_f(\tau)=0$ for every $2$-simplex $\tau$ contained in a horoball,
\item 
$|F_f(\partial \sigma)|\leq R\cdot \lip(f)$ for every $3$-simplex $\sigma$ of $\calX$.
\end{enumerate}
\end{lemma}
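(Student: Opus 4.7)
The plan is to verify the three claims in order. For $\Gamma_0$-invariance and the alternating property, I would first note that $\theta\colon X\to\Z$ is $\Gamma_0$-invariant by direct computation: for $g\in\Gamma_0$ and $x=(g_0t^k,n)$, one has $g\cdot x=(gg_0 t^k,n)$, so $\theta(g\cdot x)=k=\theta(x)$. Hence the symmetric average $\tfrac{1}{3}\sum_i f(\theta(x_i))$ is both $\Gamma_0$-invariant and symmetric in the vertices. Combining this with the $\Gamma$-invariance of $\varepsilon$ given by Proposition~\ref{epsilon:prop} and with its alternating property (built into the definition via the cyclic order of the corresponding ideal triangle on $\partial\H^2$), one obtains $F_f\in C^2_{\Delta,\alt}(\calX)^{\Gamma_0}$. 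The vanishing claim~(1) then follows immediately from the last assertion of Proposition~\ref{epsilon:prop}.

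The main work is the coboundary estimate in~(2). Writing $a_i=f(\theta(x_i))$ and $\varepsilon_i=\varepsilon(x_0,\dots,\widehat{x_i},\dots,x_3)$, I would expand directly:
\[
F_f(\partial\sigma)=\tfrac{1}{3}\sum_{i=0}^{3}(-1)^i\varepsilon_i(S-a_i),\qquad S=\textstyle\sum_j a_j.
\]
The cocycle identity $\sum_i(-1)^i\varepsilon_i=0$ cancels the $S$-term, and using it again to subtract the constant $a_0$ collapses the formula to
\[
F_f(\partial\sigma)=-\tfrac{1}{3}\sum_{i=0}^3(-1)^i\varepsilon_i\bigl(f(\theta(x_i))-f(\theta(x_0))\bigr).
\]
From the Lipschitz bound $|f(\theta(x_i))-f(\theta(x_0))|\le\lip(f)\cdot|\theta(x_i)-\theta(x_0)|$, it suffices to control $|\theta(x_i)-\theta(x_j)|$ by a constant depending only on $\kappa$.

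This is exactly where the main difficulty lies: $\theta$ is genuinely \emph{not} Lipschitz on $X$, since a single horizontal edge at depth $n$ of a combinatorial horoball can shift $\theta$ by as much as $2^n$. I plan to resolve this via a depth dichotomy. If some vertex $x_i$ of $\sigma$ has $D(x_i)>\kappa$ inside a horoball $\calH$, then exiting $\calH$ would require strictly more than $\kappa$ vertical edges; since every other $x_j$ is at $X$-distance at most $\kappa$ from $x_i$, all four vertices must lie in $\calH$. In this ``deep'' case every $3$-vertex subset lies in a single $0$-horoball, so by Proposition~\ref{epsilon:prop} each $\varepsilon_i$ vanishes and $F_f(\partial\sigma)=0$. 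In the remaining case every $D(x_i)\le\kappa$, so any $X$-geodesic of length at most $\kappa$ between two vertices of $\sigma$ reaches depth at most $2\kappa$ and changes $\theta$ by at most $2^{2\kappa}$ per edge, giving $|\theta(x_i)-\theta(x_j)|\le C:=\kappa\cdot 2^{2\kappa}$. Substituting yields $|F_f(\partial\sigma)|\le C\cdot\lip(f)$, which is the required bound with $R=C$.
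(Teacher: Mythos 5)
Your proof is correct and follows essentially the same route as the paper's: invariance and alternation from the corresponding properties of $\varepsilon$ and $\theta$, vanishing on horoball simplices via Proposition~\ref{epsilon:prop}, cancellation of the symmetric part through the cocycle identity for $\varepsilon$, and the same depth dichotomy (deep simplices lie entirely in a $0$-horoball and contribute $0$; shallow ones have geodesics of depth at most $2\kappa$, so $|\theta(x_i)-\theta(x_j)|\leq\kappa\,2^{2\kappa}$ using that $\theta$ restricted to $\G$ is Lipschitz for the word metric). The only cosmetic difference is that the paper further uses the $\{-1,0,1\}$-valued cocycle structure of $\varepsilon$ to reduce $F_f(\partial\sigma)$ to at most two differences of values of $f\circ\theta$, whereas you bound all three terms directly; both yield a universal constant $R$.
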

\begin{proof}
The fact that $F_f$ is alternating (resp.~$\G_0$-invariant) follows from the fact that $\varepsilon$ is
(resp.~that $\varepsilon$ and $\theta$ are). Moreover, if the $2$-simplex $\tau=(x_0,x_1,x_2)$ is contained in a horoball, 
then Proposition~\ref{epsilon:prop} implies $\varepsilon(x_0,x_1,x_2)=0$, so $ F_f(\tau)=0$.

 Let now $(x_0,\ldots,x_3)$ be the vertices of a $3$-simplex $\sigma$. Recall that $\calX=\calR_\kappa(X)$,
 where $\kappa=4\delta+6$ and $\delta$ is a hyperbolicity constant for $X$. Since $x_i$ and $x_j$ are the vertices of a simplex in $\calX$ we have $d(x_i,x_j)\leq \kappa$. 
If $\max D(\sigma)\geq \kappa+1$, then $\sigma$ is contained in a horoball $\calH$, and hence $F_f(\partial \sigma)=0$. In particular we can assume $\max D(\sigma)\leq \kappa$.
 By definition, if $m=\sum_{j=0}^3 f(x_j)$, then 
 $$
 F_f(\partial_i\sigma)=\frac{ \varepsilon(x_0, \ldots, \widehat{x_i}, \ldots, x_3)}{3} \left(m-f(\theta(x_i))\right)\ ,
$$
so, using that
$\sum_{i=0}^3 (-1)^i \varepsilon({}x_0, \ldots, \widehat{{}x_i}, \ldots {}x_3) = 0$, we get:
\begin{align*}
F_f(\partial\sigma) & = \sum_{i=0}^3 (-1)^i \left(\frac{\varepsilon({}x_0, \ldots, \widehat{{}x_i}, \ldots, {}x_3)}3 \left(m- f(\theta({}x_i)) \right)\right) \\ &=  
 -\sum_{i=0}^3 (-1)^i \frac{\varepsilon({}x_0, \ldots, \widehat{x_i}, \ldots, {}x_3)}3 f(\theta({}x_i))\ .
% = \theta( \overline{{}x_1} ) - \theta( \overline{{}x_2}) + \theta( \overline{{}x_3} ) - \theta( \overline{{}x_4})\ .
\end{align*}
Using again that $\varepsilon$ is a cocycle with values in $\{-1,0,1\}$ we obtain that, up to a suitable permutation of  $({}x_0, {}x_1, {}x_2, {}x_3)$,
either $F_f(\partial\sigma)=0$, or $F_f(\partial\sigma)=f(\theta(x_2 )) -f( \theta( {}x_3))$, or
$F_f(\partial\sigma)=f(\theta( {x_0} )) - f(\theta( {x_1})) + f(\theta({x_2} )) - f(\theta( {x_3}))$. In any case, in order to conclude it is sufficient to show that
$|f(\theta(x_i))-f(\theta(x_j))|\leq R/2$ for a universal constant $R$ for every fixed pair of indices $i,j\in\{0,\ldots,3\}$. 

First observe that, being a homomorphism, the restriction of $\theta$ to $\G$ is $h$-Lipschitz for some $h>0$.
Recall that $d$ denotes the distance on $X$, and denote by $d_\G$ the distance on the Cayley graph of $\G$ with respect to the fixed generating set $S$.
If $x_i=(g_i,n_i)$, $x_j=(g_j,n_j)$, with $n_i,n_j\leq \kappa$, we claim that $d_\G(g_i,g_j)\leq \kappa 2^{2\kappa}$: indeed since $d(x_1,x_2)\leq \kappa$, any vertex in a geodesic in $X$ 
between $x_1$ and $x_2$ has depth at most $2\kappa$, hence a geodesic in $X$ between $x_1$ and $x_2$ projects to a path in $\G$ of length at most $\kappa 2^{2\kappa}$. 
The conclusion follows if we set $R/2=h\kappa 2^{2\kappa}$.
 \end{proof}
We are now ready to define the quasi-cocycle $\alpha_f$.  
For every triple $(x_0,x_1,x_2)\in X^3$ we set
$$ \alpha_f(x_0,x_1,x_2) = F_f(\varphi(x_0,x_1,x_2)), $$
where $\phi$ is the relative filling from Proposition \ref{ofeuihwln}.
\begin{prop}\label{quasi-cocycle:prop}
 We have
\begin{enumerate}
\item $\alpha_f\in \QC^2_{\alt}({\G_0\actson X})^{\G_0}$,
%\item $\alpha_f$ vanishes on simplices supported in $Y$,
\item $\Def(\alpha_f)\leq K\cdot\lip(f)$ for a universal constant $K$. 
\end{enumerate}

\end{prop}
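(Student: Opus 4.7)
The plan is to verify the invariance, alternation, and defect bound in turn, with the crucial computation being a direct expansion of $\delta\alpha_f$ that reduces the problem to the relative filling of Proposition \ref{ofeuihwln} and the two key properties of $F_f$ established in Lemma \ref{estimate:F}.

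First I would check $\Gamma_0$-invariance: since $\varphi\colon X^3\to C_2^\Delta(\calX)_\red$ is $\Gamma$-equivariant by Proposition \ref{ofeuihwln} (hence $\Gamma_0$-equivariant) and $F_f\in C^2_{\Delta,\alt}(\calX)^{\Gamma_0}$ by Lemma \ref{estimate:F}, the composition $\alpha_f=F_f\circ\varphi$ is automatically $\Gamma_0$-invariant. For the alternating property, since $F_f$ is alternating as a simplicial cochain but $\varphi$ has no a priori symmetry in its arguments, I would observe that we can always replace $\alpha_f$ by $\alt_2(\alpha_f)$; alternation does not increase the $\ell^\infty$-norm and commutes (up to sign conventions) with the coboundary, so the defect bound will not be affected.

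The heart of the argument is the defect estimate. Using linearity of $F_f$ on chains, I would write
$$
\delta\alpha_f(x_0,x_1,x_2,x_3)=\sum_{i=0}^3(-1)^i F_f\bigl(\varphi(x_0,\ldots,\widehat{x_i},\ldots,x_3)\bigr)=F_f\bigl(\varphi(\partial(x_0,x_1,x_2,x_3))\bigr).
$$
By Proposition \ref{ofeuihwln}(4), the chain $\varphi(\partial(x_0,x_1,x_2,x_3))$ admits a relative filling, i.e.~there exist $B\in C_3^\Delta(\calX)_\red$ with $\|B\|_1\le T_3$ and $c\in C_2^\Delta(\calX)_\red$ with $\min D(c)\ge 0$ such that $\varphi(\partial(x_0,x_1,x_2,x_3))=\partial B+c$. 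Applying $F_f$ yields $F_f(\varphi(\partial(\cdots)))=F_f(\partial B)+F_f(c)$, and by Lemma \ref{estimate:F}(1) the second term vanishes, because every $2$-simplex appearing in $c$ lies inside a $0$-horoball.

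It remains to bound $F_f(\partial B)$. Writing $B=\sum_j\mu_j\sigma_j$ with $\sum_j|\mu_j|\le T_3$ and applying Lemma \ref{estimate:F}(2) to each $3$-simplex $\sigma_j$ gives
$$
|F_f(\partial B)|\le\sum_j|\mu_j|\cdot|F_f(\partial\sigma_j)|\le T_3\cdot R\cdot\lip(f).
$$
Setting $K=T_3R$ yields $\Def(\alpha_f)\le K\cdot\lip(f)$, which in particular proves $\alpha_f\in\QC^2_{\alt}(\Gamma_0\actson X)^{\Gamma_0}$. The only mildly subtle point — and the one worth dwelling on — is that the relative filling from Proposition \ref{ofeuihwln}(4) is exactly engineered to split $\varphi(\partial(\cdots))$ into a part controlled by the $\ell^1$-norm of a $3$-chain in $\calX$ and a part entirely concentrated in the horoballs; the two complementary properties of $F_f$ in Lemma \ref{estimate:F} are then precisely what is needed to annihilate the deep contribution and to estimate the shallow one linearly in $\lip(f)$.
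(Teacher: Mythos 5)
Your argument is correct and follows the paper's proof essentially verbatim: invariance and alternation come from the corresponding properties of $F_f$ and $\varphi$, and the defect bound comes from splitting $\varphi(\partial(x_0,\ldots,x_3))$ via the relative filling of Proposition~\ref{ofeuihwln}(4), killing the horoball part with Lemma~\ref{estimate:F}(1) and bounding $F_f(\partial B)$ by $T_3\cdot R\cdot\lip(f)$ with Lemma~\ref{estimate:F}(2). The only (harmless) deviation is that you post-compose with $\alt_2$ to force the alternating property, whereas the paper asserts that $\varphi$ itself is alternating so that $\alpha_f=F_f\circ\varphi$ is alternating as defined; either route is fine since alternation is norm non-increasing and commutes with $\delta$.
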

\begin{proof}
The $\G_0$-invariance of $\alpha_f$ follows from the $\G_0$-invariance of $\varphi$ (which, indeed, is even $\G$-invariant) and of $F_f$.
Moreover, $\alpha_f$ is alternating, since both $F_f$ and $\varphi$ are.% {\color{red} The fact that $\alpha_f$ vanishes on $Y$ follows from the fact that if $(x_0,x_1,x_2)$ is contained in a $(K+S+1)$-horoball, then $\varphi(x_0,x_1,x_2)=0$, and hence also $F_f(\phi(x_0,x_1,x_2))=0$.}

 In order to bound the defect of $\alpha_f$, let us fix a quadruple $(x_0,\ldots,x_3)\in X^4$, and estimate the value
\begin{align*}
\delta\alpha_f (x_0, x_1, x_2, x_3) & =\sum_{i=0}^3 (-1)^i \alpha_f(x_0, \ldots, \widehat{x_i}, \ldots, x_3)\\
&=\sum_{i=0}^3 (-1)^i F_f(\varphi(x_0, \ldots, \widehat{x_i}, \ldots, x_3))\\
&=  F_f \left(\sum_{i=0}^3 (-1)^i \varphi(x_0, \ldots, \widehat{x_i}, \ldots, x_3)\right). 
\end{align*}

Proposition~\ref{ofeuihwln} ensures the existence of a relative filling  of the chain $\sum_{i=0}^3 (-1)^i \varphi(x_0, \ldots, \widehat{x_i}, \ldots, x_3)$: we can choose a 3-chain $B\in C_{3}^\Delta(\calX)_\red$ with $\|B\|_1\leq T_3$ 
and such that the difference $\partial B -  \sum_{i=0}^3 (-1)^i \varphi(x_0, \ldots, \widehat{x_i}, \ldots, x_3)$ is a sum of simplices of $\calX$ contained in a union of  0-horoballs. 
We write ${}B = \sum_j \lambda_j \sigma_j$, with  $\sum_j |\lambda_j|\leq T_3$. 
  Since $F_f(\sigma) = 0$ if $\sigma$ is a simplex contained in a 0-horoball $\mathcal H$, we have:
\begin{align*}
 |\delta\alpha_f( (x_0, x_1, x_2, x_3))| &= \left|F_f  ( \partial B) -F_f \left(\partial B -  \sum_{i=0}^3 (-1)^i \varphi(x_0, \ldots, \widehat{x_i}, \ldots, x_3) \right) \right| \\
& = |F_f( \partial B)| = \left|\sum_j \lambda_j F_f( \partial \sigma_j)\right| \leq \sum_j |\lambda_j|\cdot |F_f(\partial \sigma_j)|  \\&\leq R\cdot \lip(f)\cdot \sum_j |\lambda_j| \leq
 R\cdot T_3\cdot \lip(f)\ ,
\end{align*}
 where $R$ is the constant provided by Lemma~\ref{estimate:F}. The conclusion follows.
 \end{proof}

\subsection{The $2$--cycle ${A}_m$ }
Purpose of this section is to construct, for each $m\in\N$, a cycle ${A}_m \in C_2(\G_0\actson X)_{\red,\G_0}$ on which we will evaluate our cocycles $\alpha_f$.

 In what follows  we will omit, for ease of notation, to distinguish a chain in $C_n(\G_0\actson X,\R)_{\G_0}$ from its reduced image in
 $C_n(\G_0\actson X,\R)_{\red,\G_0}$. For example, we will simply write
  $(x,y)=-(y,x)$ for every $(x,y)\in X^2$. 
We will construct the cycle ${A}_m$ as a union of different combinatorial analogues of geometric pieces. 
\begin{center}
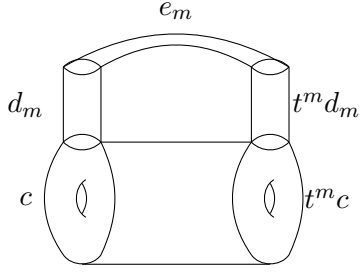
\begin{figure}[hh]
 \begin{tikzpicture}[scale=.5]
%\draw[help lines] (0,0) grid (10,6);
\node at (0,1.5) {$c$};
\draw (1,3) to [in=125, out=-125](1,0)to [in=-125, out=-55] (2,0) to [in=-65,out=65] (2,3) to [in=45, out=135](1,3);
\draw (2,3) to [in=-45, out=-135](1,3);
\node at (0,4) {$d_m$};
\draw (1,3) to (1,5);
\draw (2,3) to (2,5);
\draw (2,5) to [in=45, out=135](1,5);
\draw (2,5) to [in=-45, out=-135](1,5);
%genus
\draw (1.6,1) to [in=-150,out=150] (1.6,2);
\draw (1.5,1.1) to [in=-60,out=60] (1.5,1.9);
\node at (8,1.5) {$t^mc$};
\draw (6,3) to [in=125, out=-125](6,0)to [in=-125, out=-55] (7,0) to [in=-65,out=65] (7,3) to [in=45, out=135](6,3);
\draw (7,3) to [in=-45, out=-135](6,3);
\node at (8,4) {$t^md_m$};
\draw (6,3) to (6,5);
\draw (7,3) to (7,5);
\draw (7,5) to [in=45, out=135](6,5);
\draw (7,5) to [in=-45, out=-135](6,5);
%genus
%genus
\draw (6.6,1) to [in=-150,out=150] (6.6,2);
\draw (6.5,1.1) to [in=-60,out=60] (6.5,1.9);

\node at (4,6.5){$e_m$};
\draw (1,5) to [in=150, out=30] (7,5);
\draw (2,5) to [in=150, out=30] (6,5);
%horizontal lines
\draw(1.5,-.25) to (6.5,-.25);
\draw (2,3) to (6,3);
\end{tikzpicture}
\caption{The $2$--cycle ${A}_m$ }
\end{figure}
\end{center}
Let $e\in \G$ denote the identity element. The combinatorial analogue of a relative fundamental class for $\G_0\subseteq X$ is 
$$ c := \left((e, 0), (b, 0), (ba, 0)\right) +\left((e, 0), (ba, 0), (ab, 0)\right) + \left((e, 0), (ab, 0), (a,0)\right) .$$

An easy computation in $C_2(\G_0\actson X)_{\red,\G_0}$ gives
$$ \partial c = ((e, 0), ([a, b], 0))=( (ba, 0), (ab, 0)). $$

\begin{center}
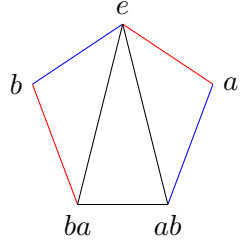
\begin{figure}
\begin{tikzpicture}[scale=.8]
%\node at (-1.5,1.5) {$c$};
\draw(2,3) [red] to (3.5,2);
\node at (2,3)[above] {$e$};
\draw (3.5,2)[blue] to (2.75,0);
\node at (3.5,2)[right]{$a$};
\draw (2.75,0) node [below]{$ab$} to (1.25,0);
\draw (1.25,0)[red]  to (0.5,2);
\node at (1.25,0)[below] {$ba$};
\draw (0.5,2)[blue] to(2,3);
\node at(0.5,2)[left]{$b$};
\draw (2,3) to (1.25,0) ;
\draw (2,3) to (2.75,0); 
\end{tikzpicture}
\caption{The relative fundamental class $c$}
\end{figure}
\end{center}

The second building block of our cycle ${A}_m$ is the combinatorial counterpart of a small annulus going deep enough into the horoball. We will need this to be able to join the boundaries of two combinatorial fundamental classes with two simplices.  
We will prove that, for each $K\in\N$, 
the 1-cycle $\partial c$ is homologous to the 1-cycle $a_K:=\frac 1{2^K}\left((e, K), ([a, b]^{2^K}, K)\right)$.  %only know that $\alpha_f$ vanishes on simplices that are deep enough, so that their filling is still contained in the corresponding horoball.

We choose $K_m:=\lfloor \log_2 m\rfloor+1 $ big enough, so that $d((e, K_m), (t^m,K_m))=1$, and consider the chain

%$$ d := ((e, 0),( [a, b], 1),( [a, b], 0) + (e, 0;\, e,1;\, [a, b], 1) + \sum_{i=1}^{k} \frac{1}{2^i} (e, i;\, [a, b]^{2^i}, i+1;\, [a, b]^{2^{i-1}}, i) +  $$
%$$ + ([a, b]^{2^{i-1}}, i;\, [a, b]^{2^i}, i+1;\, [a, b]^{2^i}, i) + (e,i;\, \, e,i+1;\, [a, b]^{2^i}, i+1).  $$
\begin{align*}d_m :=\sum_{i=0}^{K_m-1} \frac{1}{2^{i+1}} \Big(&\left((e, i), (e,i+1),( [a, b]^{2^i}, i)\right) + \\ + & \left(([a, b]^{2^{i}}, i), (e,i+1),([a, b]^{2^{i+1}}, i+1)\right) +\\+& \left(([a, b]^{2^{i}},i),([a, b]^{2^{i+1}}, i+1),([a, b]^{2^{i+1}},i)\right)\Big).
\end{align*}
In $C_2(X, \R)_{\red,\G_0}$, we have
$$ \partial d_m = -\partial c+a_{K_m}, $$
which, in particular, proves that $\partial c$ and $a_{K_m}$ are homologous.

\begin{center}
\begin{figure}[h]
\begin{tikzpicture}[scale=2]
\draw (0,0) node [below]{$\left(e,i\right)$} to (1,0) node [below]{$\left([a,b]^{2^{i}},i\right)$} to (2,0) node [below]{$\left([a,b]^{2^{i+1}},i\right)$} 
to (2,1)node [above]{$\left([a,b]^{2^{i+1}},i+1\right)$} to (0,1) node [above]{$\left(e,i+1\right)$} to (0,0);
\draw (0,1) to (1,0) to (2,1);
\draw (0,0) [red] to (0,1);
\draw (2,0) [red] to (2,1);

\end{tikzpicture}
\caption{A portion of the annulus $d_m$}
\end{figure}
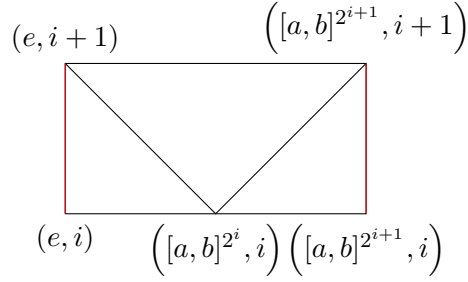
\end{center}

Our third and last building block is an annulus supported deep in the horoball with boundary $a_{K_m}-t^m a_{K_m}$. We will call it $e_m$:  
\begin{align*}
e_{m} := \frac{1}{2^{K_m}}&\Big(\left((e,{K_m}) , (t^m[a, b]^{2^{K_m}}\!,{K_m}), (t^m,{K_m})\right)\\
 + &\left((e, K), ([a, b]^{2^{K_m}},{K_m}),(t^m[a, b]^{2^{K_m}},{K_m})\right)\Big).
\end{align*}
In order to verify that $\partial e_m=a_{K_m}-t^ma_{K_m}$, we use that the pseudo-Anosov $\psi$ fixes the commutator $[a,b]$ and hence in particular $t^m[a,b]^{2^{K_m}}=[a,b]^{2^{K_m}}t^m$.
\begin{center}
\begin{figure}[h]
\begin{tikzpicture}[scale=1.7]
\node at (0,0) [below]{$\left(e,{K_m}\right)$};
\draw  (2,0) node [below]{$\left(t^m,{K_m}\right)$} 
to (2,1)node [above]{$\left(t^m[a,b]^{2^{{K_m}}},{K_m}\right)$} to (0,1) node [above]{$\left([a,b]^{2^{{K_m}}},{K_m}\right)$} to (0,0);
\draw (0,0) to (2,1);
\draw (0,0)[red] to (2,0);
\draw (0,1) [red]to (2,1);
\end{tikzpicture}
\caption{The annulus $e_m$}
\end{figure}
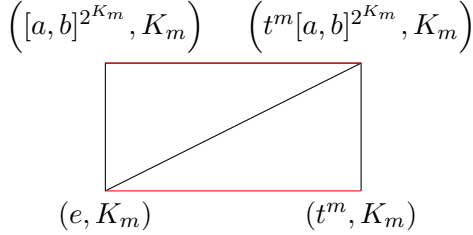
\end{center}
%Observe that $e_m$ is always made of two simplices in $C_2(\G_0\actson X)_{\red,\G_0}$ but, as opposed to all the pieces constructed so far, these two simplices do not, in general, belong to the Rips complex $\calX$.\\ 

%Here we have used the fact that $t^m [a, b]^{2^k} = F_*^m([a, b]^{2^k}) t^m = [a, b]^{2^k} t^m$, hence 
%$$- ((t^m, k);\, (e, k)) + ((t^m [a, b]^{2^k}, k);\, ([a, b]^{2^k}, k)) = $$ $$ = - ((t^m, k);\, (e, k)) + [a, b]^{2^k} ((t^m, k);\, (e, k)) = 0. $$
\noindent We can now define
$$ {A}_m := t^m\cdot (c + d_m) - (c + d_m) + e_m. $$ 
\begin{lemma}\label{lem:boundary}
Let $m\geq 0$. Then
\begin{enumerate}
 \item 
$\|A_m\|_1\leq 9$;
 \item 
the chain ${A}_m$ is a boundary in $C_2(\G_0\actson X)_{\red,\G_0}$.
\end{enumerate}
\end{lemma}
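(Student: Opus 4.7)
For part (1), I would compute $\|A_m\|_1$ by bounding each summand individually and applying the triangle inequality. A direct count gives $\|c\|_1 = 3$, $\|d_m\|_1 = 3\sum_{i=0}^{K_m-1} 2^{-(i+1)} = 3(1-2^{-K_m}) < 3$ (a convergent geometric sum), and $\|e_m\|_1 = 2^{1-K_m} \leq 1$, so each piece is bounded independently of $m$. Since $\Gamma$ acts by isometries on $C_*(\Gamma_0 \actson X)_{\red, \Gamma_0}$, the translate $t^m(c+d_m)$ has the same norm as $c + d_m$. The rough triangle-inequality bound $\|A_m\|_1 \leq 2\|c+d_m\|_1 + \|e_m\|_1$ already gives a universal constant; to tighten this to $9$ one exploits cancellations in the reduced coinvariant space (identification modulo $\Gamma_0$-translation and sign-reverting permutations of vertices). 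For the applications, only the boundedness of $\|A_m\|_1$ by a universal constant will matter.

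For part (2), my strategy is two-step: first verify $A_m$ is a cycle, then invoke the vanishing of $H_2(F_2;\R)$. The cycle condition follows from the boundary computations already performed during the construction of the building blocks: $\partial(c+d_m) = \partial c + \partial d_m = a_{K_m}$, whence $\partial(t^m(c+d_m)) = t^m a_{K_m}$ by $\Gamma$-equivariance of the boundary; combining with $\partial e_m = a_{K_m} - t^m a_{K_m}$ yields
$$\partial A_m = \big(t^m a_{K_m} - a_{K_m}\big) + \big(a_{K_m} - t^m a_{K_m}\big) = 0.$$

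For the second step, I would observe that $C_*(\Gamma_0 \actson X)_{\red, \Gamma_0}$ computes the real group homology $H_*(\Gamma_0; \R) = H_*(F_2; \R)$. Since $\Gamma_0$ acts freely on $X$, the diagonal action on each $X^{n+1}$ is also free, so $C_n(X) = \R[X^{n+1}]$ is a free $\R[\Gamma_0]$-module. The augmented complex $C_*(X) \to \R$ is contractible via the standard basepoint homotopy $s_n(x_0,\ldots,x_n) = (\ast, x_0, \ldots, x_n)$ associated to any $\ast \in X$; hence $C_*(X) \to \R$ is a free $\R[\Gamma_0]$-resolution of the trivial module, and taking $\Gamma_0$-coinvariants yields $H_*(\Gamma_0; \R)$. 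Because the alternation operator is a $\Gamma_0$-equivariant chain-homotopy equivalence between $C_*(X)$ and its alternating subcomplex, the reduced quotient $C_*(X)_{\red,\Gamma_0}$ computes the same homology. Since $H_2(F_2;\R) = 0$, every 2-cycle in $C_2(\Gamma_0\actson X)_{\red,\Gamma_0}$ is a boundary; in particular $A_m$ is a boundary, which proves (2).

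The main technical care goes into the norm bookkeeping for (1); the rest is essentially formal, once one recognizes that the bar-type resolution $C_*(X)$ on the free $\Gamma_0$-set $X$ is a free resolution of $\R$ and that the target homology $H_2(F_2;\R)$ vanishes.
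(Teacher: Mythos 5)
Your proposal is correct. For part (1) you do essentially what the paper does: triangle inequality plus the geometric series $\sum_i 2^{-i-1}$; the exact constant is immaterial (the paper's own count, $2\|c\|_1+2\|d_m\|_1+\|e_m\|_1$, in fact gives something closer to $12$ than $9$ unless one invokes cancellation in the reduced coinvariant quotient, so your caution on that point is well placed — all that is used later is a bound independent of $m$). For part (2) you take a genuinely different route. The paper first observes that every simplex appearing in $A_m$ has vertices within distance $\kappa$ of each other, so $A_m$ lifts to a simplicial cycle in $C_2^\Delta(\calX)_{\red,\G_0}$ for the Rips complex $\calX$; it then uses contractibility of $\calX$ (a consequence of hyperbolicity of $X$) and freeness of the action to identify the simplicial homology of $\calX/\G_0$ with $H_*(F_2;\R)$, fills $A_m$ there, and pushes the filling back into $C_3(\G_0\actson X)_{\red,\G_0}$. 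You instead work directly with the full simplex complex $C_*(X)=\R[X^{*+1}]$: freeness of the $\G_0$-action makes each term a free $\R[\G_0]$-module, the (non-equivariant) basepoint cone homotopy shows the augmented complex is exact, so the coinvariants compute $H_*(\G_0;\R)$, and the passage to the reduced quotient is handled by the alternation homotopy exactly as the paper does elsewhere. Your argument is more elementary and self-contained — it needs neither the Rips complex nor hyperbolicity — while the paper's version recycles machinery already set up (and produces a filling $B_m$ supported in $\calX$, though that extra information is not used in Proposition \ref{prop:final}). Both are complete proofs.
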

\begin{proof}
(1): We have
\begin{align*}
\|A_m\|_1&\leq 2\|c\|_1+2\|d_m\|_1+\|e_m\|_1\\ &\leq 6+3\sum_{i=0}^{K_m-1} 2^{-i-1}+ 2\cdot 2^{-K_m}\leq 6+3\sum_{i=0}^{\infty} 2^{-i-1}=9 \ .
\end{align*}
(2): We already verified that $\partial {A}_m=0$. Observe that each simplex involved in the definition of ${A}_m
$ exists in the Rips complex $\calX$ and hence ${A}_m$ also defines a cycle as a simplicial chain in $C_2^\Delta(\calX)_{\red,\G_0}=C_{2}^{\Delta}(\calX/\Gamma_0)_\red$. 
Since the Rips complex $\calX$ is contractible, the simplicial homology of $\calX/\G_0$ is canonically isomorphic to the homology of $\G_0\cong F_2$, which of course vanishes in degree 2.
Therefore, there exist a simplicial 3-chain ${B}_m\in C_3^\Delta(\calX)_{\red,\G_0}$ with $\partial {B}_m={A}_m$. The chain ${B}_m$ also defines an element of $C_3(\G_0\actson X)_{\red,\G_0}$ with $\partial {B}_m={A}_m$.
\end{proof}

\subsection{Proof of Theorem \ref{volumeform:thm}}

We now turn to the proof of Theorem \ref{volumeform:thm} that we recall for the reader's convenience:
\begin{thm}
 Let $\mathcal{L}(\mathbb{Z},\mathbb{R})$ be the space of Lipschitz real functions on $\mathbb{Z}$. There exist a constant $C>0$ and a linear map
 $$
 \alpha\colon \mathcal{L}(\mathbb{Z},\mathbb{R})\to \QCa^2({\G_0\actson X})^{\G_0}\,% , \qquad \alpha(f)=\alpha_f
 $$
 such that the following conditions hold:
\begin{enumerate}
 \item $\Def(\alpha(f))=\|\delta\alpha(f))\|_\infty \leq C\cdot \lip(f)$ for every $f\in\mathcal{L}(\mathbb{Z},\mathbb{R})$;
 \item $[\delta\alpha(f)]=0$ in $H^3_b(\G_0\actson X)\cong H^3_b(\G_0)$ if and only if $f$ is bounded.
\end{enumerate}
\end{thm}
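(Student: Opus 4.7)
Linearity of $\alpha$ is immediate from the linearity of $F_f$ in $f$, and Proposition~\ref{quasi-cocycle:prop} already gives the containment $\alpha_f\in\QCa^2(\G_0\actson X)^{\G_0}$ together with the defect bound of item~(1). The easy direction of item~(2) is also handled at once: when $f$ is bounded, Proposition~\ref{ofeuihwln}(3) forces $|\alpha_f(x_0,x_1,x_2)|\leq T_2\|f\|_\infty$, so $\alpha_f$ itself is a bounded cochain and $[\delta\alpha_f]=0$ in $H^3_b(\G_0)$. The substance of the theorem is therefore the converse implication.

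To detect $[\delta\alpha_f]\neq 0$ when $f$ is unbounded, my plan is to pair $\delta\alpha_f$ against the $2$-cycles $A_m$ built in Section~4.4. Since $A_m=\partial B_m$ is a boundary (Lemma~\ref{lem:boundary}(2)) with $\|A_m\|_1\leq 9$ (Lemma~\ref{lem:boundary}(1)), the vanishing $[\delta\alpha_f]=0$ would produce $\beta\in C^2_b(\G_0\actson X)^{\G_0}$ with $\delta\beta=\delta\alpha_f$; the cocycle $\alpha_f-\beta$ then pairs trivially with the boundary $A_m$, giving the uniform bound $|\langle\alpha_f,A_m\rangle|=|\langle\beta,A_m\rangle|\leq 9\|\beta\|_\infty$. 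It suffices therefore to exhibit unbounded behaviour of $m\mapsto\langle\alpha_f,A_m\rangle$.

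Decomposing $A_m=t^m(c+d_m)-(c+d_m)+e_m$ I would first check that the horoball pieces contribute zero. A direct case-by-case inspection shows that every triple appearing in $d_m$ (and hence in $t^md_m$) has all three vertices inside the $0$-horoball based on the coset $H$ with pairwise intrinsic horoball-distance at most~$2$, so Proposition~\ref{ofeuihwln}(2) places $\supp\varphi(\cdot)$ inside this horoball and Lemma~\ref{estimate:F}(1) kills the value of $F_f$ there. The two triples in $e_m$ have one pair of vertices at intrinsic horoball-distance~$3$; but these vertices sit at depth $K_m\simeq\log_2 m$, so for $m$ beyond a universal threshold the convexity of the $C$-horoballs ($C>\delta$) forces geodesics between them, hence $\supp\varphi(\cdot)$, to remain in the horoball, and Lemma~\ref{estimate:F}(1) applies again. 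Using $\theta(t^mx)=m+\theta(x)$ and the $\G$-equivariance of $\varepsilon$ and $\varphi$ then reduces the pairing to
$$\langle\alpha_f,A_m\rangle=\alpha_f(t^mc)-\alpha_f(c)=\alpha_{\tau_mf}(c)-\alpha_f(c),\qquad \tau_mf(k):=f(k+m).$$

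The crux is a computation of $\alpha_f(c)$. Writing $\varphi(c)=\sum_j\lambda_j\sigma_j$ and unfolding the definition of $F_f$ gives $\alpha_f(c)=\sum_k\mu_k f(k)$ with $\sum_k|\mu_k|\leq T_2$, $\mu_k=0$ for $|k|$ larger than a universal constant (from the support control in Proposition~\ref{ofeuihwln}), and $\sum_k\mu_k=\varepsilon(\varphi(c))$. The key identity $\varepsilon(\varphi(c))=\varepsilon(c)=:\lambda$ holds because for the shallow triples of $c$ the bicombing cycles of Theorem~\ref{main theorem GM} satisfy $w=0$ (the depth requirement $\min D(w)\geq L_2$ is incompatible with the $S_1$-neighbourhood of the low-depth geodesics between the vertices), so $\partial\varphi(c)=z(c)=Q(\partial c)=\partial c$ in $C^\Delta_1(\calX)_\red$, and $\varphi(c)-c$ is a simplicial $2$-cycle in the contractible complex $\calX$ (Proposition~\ref{i Rips sono contraibili}), hence a boundary, on which the simplicial cocycle $\varepsilon$ vanishes. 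A quick word-length check in $F_2$ shows that the five elements $\{e,a,b,ab,ba\}$ lie in pairwise distinct cosets of $\langle[a,b]\rangle$, so the five cusp points $\rho(g)\cdot\bar q$ are distinct in $\partial\H^2$ and every term in $\lambda=\varepsilon(e,b,ba)+\varepsilon(e,ba,ab)+\varepsilon(e,ab,a)$ equals $\pm 1$; being a sum of three odd signs, $\lambda$ is non-zero. Lipschitz control of $f$ on the bounded $k$-range then gives $\alpha_f(c)=\lambda f(0)+O(\lip(f))$ and, by the same estimate applied to $\tau_mf$, $\alpha_f(t^mc)=\lambda f(m)+O(\lip(f))$, so
$$\langle\alpha_f,A_m\rangle=\lambda\bigl(f(m)-f(0)\bigr)+O(\lip(f)),$$
which is unbounded in $m$ whenever $f$ is unbounded, contradicting the uniform bound $9\|\beta\|_\infty$ and completing the proof. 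I expect the main technical obstacle to be the horoball-containment of the two ``cap-off'' triples of $e_m$ (requiring the deep-convexity argument valid only for $m$ beyond a universal threshold); the key identity $\varepsilon(\varphi(c))=\varepsilon(c)$ and the word-length argument for $\lambda\neq 0$ are comparatively clean once set up.
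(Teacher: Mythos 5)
Your overall strategy coincides with the paper's: pair against the uniformly bounded boundaries $A_m$, kill the horoball pieces $t^md_m+e_m-d_m$ via Proposition~\ref{ofeuihwln}(2) and Lemma~\ref{estimate:F}(1), and reduce to evaluating on $t^mc-c$. (A minor point: all pairs of vertices in the two triples of $e_m$ are already at intrinsic horoball-distance at most $2$ --- for instance $d_\calH\left((e,K_m),(t^m[a,b]^{2^{K_m}},K_m)\right)=2$ via the intermediate vertex $([a,b]^{2^{K_m}},K_m)$ --- so Proposition~\ref{ofeuihwln}(2) applies verbatim and your deep-convexity detour, with its threshold on $m$, is not needed.) Your approximate evaluation $\langle\alpha_f,A_m\rangle=\lambda(f(m)-f(0))+O(\lip(f))$ would indeed suffice in place of the paper's exact identity, \emph{provided} $\lambda\neq 0$.

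That is where the genuine gap lies. The elements $ab$ and $ba$ do \emph{not} lie in distinct left cosets of $\langle[a,b]\rangle$: one has $(ba)^{-1}(ab)=a^{-1}b^{-1}ab=[a,b]$, so $ba\cdot\overline{q}=ba\cdot([a,b]\cdot\overline{q})=ab\cdot\overline{q}$ and hence $\varepsilon(e,ba,ab)=0$ (this is precisely the content of Lemma~\ref{lem:epsilon(a,b,ab)}). Your parity argument therefore collapses: $\lambda=\varepsilon(e,b,ba)+0+\varepsilon(e,ab,a)=\pm1\pm1$, which could a priori vanish. To conclude $\lambda\neq 0$ one needs the further geometric input that the two nondegenerate terms carry the \emph{same} sign; the paper obtains this from the fact that the two ideal triangles $(\overline{q},ab\cdot\overline{q},a\cdot\overline{q})$ and $(\overline{q},b\cdot\overline{q},ba\cdot\overline{q})$ together form an embedded, hence coherently oriented, ideal fundamental domain for $\rho(\G_0)\actson\mathbb{H}^2$, whence $\lambda=\pm2$. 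Without this step the nontriviality of $[\delta\alpha_f]$ for unbounded $f$ is not established. More mildly, your identity $\varepsilon(\varphi(c))=\varepsilon(c)$ requires $\partial\varphi(c)=\partial c$; this does hold (the five boundary edges of $c$ are genuine edges of $X$, so $Q$ restores them by Theorem~\ref{main theorem GM}(1), while the interior pairs cancel by antisymmetry of $Q$), but the paper sidesteps the computation entirely by noting that $\varphi$ acts as the identity on the simplices of $c$ and $t^mc$, which gives the exact value $\alpha_f(t^mc-c)=2\varepsilon(e,ab,a)(f(m)-f(0))$ at once.
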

Of course %, if $\mathcal{L}(\mathbb{Z},\mathbb R)$ denotes the space of Lipschitz functions,
 the map 
$$\begin{array}{cccc}
 \alpha\colon &\mathcal{L}(\mathbb{Z},\mathbb{R})&\to &\QCa^2({\G_0\actson X})^{\G_0}\\
&f&\mapsto&\alpha_f
\end{array}$$
defined in Section \ref{sec:qc}
is linear, and we proved (1) in Proposition \ref{quasi-cocycle:prop}. The last missing step in the proof of Theorem \ref{volumeform:thm} is:
\begin{prop} \label{prop:final}
$\delta \alpha_f$ represents $0$ in $H_b^3(\G_0\actson X)$ if and only if the Lipschitz function $f$ is bounded. 
\end{prop}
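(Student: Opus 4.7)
The plan is to prove the two implications separately.

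For the easy direction, if $f$ is bounded then $|F_f(\tau)|\le\|f\|_\infty$ for every $2$-simplex $\tau$, and Proposition~\ref{ofeuihwln}(3) yields $|\alpha_f(x_0,x_1,x_2)|=|F_f(\varphi(x_0,x_1,x_2))|\le T_2\|f\|_\infty$; thus $\alpha_f$ is itself a bounded invariant primitive of $\delta\alpha_f$, which hence vanishes in $H^3_b(\G_0\actson X)$. For the converse, suppose $\delta\alpha_f=\delta\beta$ with $\beta\in C^2_b(\G_0\actson X)^{\G_0}$. By Lemma~\ref{lem:boundary} each $A_m$ is a boundary in $C_2(\G_0\actson X)_{\red,\G_0}$ of $\ell^1$-norm at most $9$, so pairing the cocycle $\alpha_f-\beta$ with $A_m$ gives $\langle\alpha_f,A_m\rangle=\langle\beta,A_m\rangle$, whence $|\langle\alpha_f,A_m\rangle|\le 9\|\beta\|_\infty$ \emph{uniformly in} $m$. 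The idea is then to derive a contradiction when $f$ is unbounded, by showing this pairing grows unboundedly in $m$.

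Next, compute $\langle\alpha_f,A_m\rangle$. The choice $K_m=\lfloor\log_2 m\rfloor+1$ ensures that every $2$-simplex in $d_m$, $t^md_m$, and $e_m$ has all three vertices inside the horoball based on $H$ with pairwise intrinsic distance $\le 2$; Proposition~\ref{ofeuihwln}(2) then forces $\supp\varphi$ of such a simplex into that horoball, and Lemma~\ref{estimate:F}(1) kills it under $F_f$. Hence these three pieces contribute nothing and $\langle\alpha_f,A_m\rangle=\alpha_f(t^mc)-\alpha_f(c)$. Writing $\varphi(c)=\sum_j\lambda_j\tau_j$ with $\tau_j$ having vertices $y_0^{(j)},y_1^{(j)},y_2^{(j)}$, and setting $\theta_k^{(j)}:=\theta(y_k^{(j)})$, the $\G$-equivariance of $\varphi$, the $\G$-invariance of $\varepsilon$ (Proposition~\ref{epsilon:prop}), the identity $\theta(t^my)=\theta(y)+m$, and the Lipschitz expansion $f(\theta+m)-f(\theta)=\bigl(f(m)-f(0)\bigr)+O(\lip(f)|\theta|)$ combine to give
\[
\alpha_f(t^mc)-\alpha_f(c)=V\bigl(f(m)-f(0)\bigr)+E_m,\quad |E_m|\le 2R\,\|\varphi(c)\|_1\,\lip(f),
\]
where $V:=\sum_j\lambda_j\,\varepsilon(\tau_j)$ and $R:=\max|\theta_k^{(j)}|<\infty$, both independent of $m$ (since $\supp\varphi(c)$ is a fixed bounded set).

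The main obstacle is showing $V\neq 0$. The key point is that $\varepsilon:X^3\to\R$ is a cocycle on the \emph{full} simplicial complex on the vertex set $X$ (Proposition~\ref{epsilon:prop}); since that complex is contractible and $H_2(F_2)=0$, the value $\varepsilon(\eta)$ of a $\G_0$-invariant $2$-chain $\eta$ depends only on its boundary. Now $\partial\varphi(c)=z(c)=\partial c-w(c)$ by Theorem~\ref{main theorem GM}\eqref{sumzw} together with \eqref{Q1} (which identifies $Q(\partial c)$ with $\partial c$ edge-by-edge), and by \eqref{minDw} the cycle $w(c)$ is supported at depth $\ge L_2\ge 1$, where distinct horoballs are pairwise disjoint. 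Hence $w(c)$ splits as $\sum_I w_I$ with each $w_I$ a $1$-cycle inside a single horoball $\calH_I$, and each $w_I$ bounds a $2$-chain $D_I$ supported in $\calH_I$ (horoballs are contractible, so any cycle in one is a boundary there). Setting $D:=\sum_I D_I$ and $\tilde c:=c-D$, we have $\partial\tilde c=\partial\varphi(c)$, so $V=\varepsilon(\varphi(c))=\varepsilon(\tilde c)=\varepsilon(c)-\varepsilon(D)$; and $\varepsilon(D)=0$ by Proposition~\ref{epsilon:prop}, since each simplex of $D$ is contained in a single horoball. Therefore $V=\varepsilon(c)$, which is the pairing of the orientation cocycle of $\rho$ with a relative fundamental class of the once-punctured torus $\Sigma$; by Milnor--Wood this coincides (up to sign) with the Euler number of the hyperbolization $\rho$, equal to $\chi(\Sigma)=-1\neq 0$.

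Putting everything together, if $f$ is unbounded then $\sup_m|f(m)-f(0)|=\infty$ and $V\neq 0$, so $|\langle\alpha_f,A_m\rangle|\to\infty$ along a subsequence, contradicting $|\langle\alpha_f,A_m\rangle|\le 9\|\beta\|_\infty$. Hence $f$ must be bounded, completing the proof.
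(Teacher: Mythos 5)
Your proposal is correct and follows the paper's strategy: both directions, the pairing with the cycles $A_m$ against $\alpha_f-\beta$, the observation that the horoball pieces $d_m$, $t^md_m$, $e_m$ die under $\varphi$ and $F_f$, and the reduction to $\alpha_f(t^mc)-\alpha_f(c)$ are exactly the paper's argument. Where you diverge is in the final evaluation. The paper (Lemma~\ref{stimacrescita}) invokes Proposition~\ref{ofeuihwln}~(1) to assert that $\varphi$ is the identity on the simplices of $c$ and $t^mc$, and then reads off the exact value $2\varepsilon(e,ab,a)\bigl(f(m)-f(0)\bigr)$ from Lemma~\ref{lem:epsilon(a,b,ab)}. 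You instead work with a general $\varphi(c)$, extract the leading term $V\bigl(f(m)-f(0)\bigr)$ up to an $m$-independent error of order $\lip(f)$, and prove $V\neq 0$ by a homological argument: $\varepsilon$ is an invariant cocycle, $H_2(F_2)=0$ makes its value on a coinvariant $2$-chain depend only on the boundary, and the deep cycle $w(c)$ can be filled inside horoballs where $\varepsilon$ vanishes, giving $V=\varepsilon(c)$. This is more involved but also more robust: it does not require $\varphi(c)=c$, a point where the paper's appeal to Proposition~\ref{ofeuihwln}~(1) is slightly delicate (some pairs of vertices of the simplices of $c$, e.g.\ $(e,0)$ and $(ba,0)$, are at distance $2$ rather than $1$ for the stated generating set, so only the boundary edges of $c$ — which is all your argument uses via $Q(\partial c)=\partial c$ after cancellation — are literally covered). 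One small slip: $\varepsilon(c)=\pm 2$ by Lemma~\ref{lem:epsilon(a,b,ab)} (twice the area of an ideal triangle divided by $\pi$), not $\chi(\Sigma)=-1$; this does not affect the only fact you need, namely $V\neq 0$.
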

In order to prove Proposition \ref{prop:final} we need to compute the value of $\alpha_f$ on ${A}_m$. We begin with a preliminary lemma:
\begin{lemma}\label{lem:epsilon(a,b,ab)}
Let $a,b$ be the generators of $\G_0$. We have $\varepsilon (e, ab, a)=\varepsilon(e, b, ba)=\pm1$ and $\varepsilon(e, ba, ab)=0$.
\end{lemma}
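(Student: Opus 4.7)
The plan is to reduce all three equalities to simple word-level computations in $F_2$, using the fact that the stabilizer of $\overline{q}$ in $\rho(\G_0)$ is exactly the cyclic group generated by $\rho([a,b])$. Indeed, by the setup the conjugacy class of $[a,b]$ corresponds to the simple loop around the puncture of $\Sigma$, so $\rho([a,b])$ is a primitive parabolic element of $\isom^+(\H^2)$, and consequently the $\rho(\G_0)$-stabilizer of its unique fixed point $\overline q$ is generated by $\rho([a,b])$. Thus for $g,h\in\G_0$ one has $g\cdot\overline{q}=h\cdot\overline{q}$ if and only if $h^{-1}g\in\langle[a,b]\rangle$.

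With this reduction in place, the third identity $\varepsilon(e, ba, ab)=0$ follows from the direct calculation
\[
(ab)^{-1}(ba) \;=\; b^{-1}a^{-1}ba \;=\; [a,b]^{-1} \;\in\; \langle [a,b]\rangle,
\]
so $ab\cdot\overline q = ba\cdot\overline q$ and the triple of boundary points is degenerate.

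For the first two identities it then suffices to verify that the triples $(e,ab,a)$ and $(e,b,ba)$ give rise to three pairwise distinct boundary points, since the range of $\varepsilon$ is $\{-1,0,1\}$. Equivalently, by the criterion above, I will check that the six elements
\[
a,\ b,\ ab,\ a^{-1}(ab)=b,\ b^{-1}(ba)=a,\ (ab)a^{-1}\ \text{(etc.)}
\]
obtained from the pairwise quotients of $\{e,ab,a\}$ and $\{e,b,ba\}$ all lie outside $\langle[a,b]\rangle\subset F_2$. This is immediate by inspection, because $[a,b]=a^{-1}b^{-1}ab$ is a cyclically reduced word of length $4$, whereas $a$, $b$, $ab$ are cyclically reduced of length $\leq 2$, and no non-trivial power $[a,b]^n$ can reduce to any of them in the free group $F_2=\langle a,b\rangle$.

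The only point requiring a touch of care is justifying that the stabilizer of $\overline q$ in $\rho(\G_0)$ is exactly $\rho(\langle[a,b]\rangle)$ (rather than some larger cyclic group containing it); this uses that $[a,b]$ represents the puncture-loop and hence is primitive in $\G_0$, so its image is a primitive parabolic in the cusp subgroup of the hyperbolization. Once this is granted, the remainder of the argument is the trivial word-combinatorial verification outlined above, and I expect no further obstacle.
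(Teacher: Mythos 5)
Your computation for $\varepsilon(e,ba,ab)=0$ is correct and is essentially the paper's (the paper writes it as $ba\cdot\overline q=ba\cdot([a,b]\cdot\overline q)=ab\cdot\overline q$), and your non-degeneracy check for the other two triples is also fine: the stabilizer of $\overline q$ in $\rho(\G_0)$ is indeed $\rho(\langle[a,b]\rangle)$, since $[a,b]$ is not a proper power in $F_2$ and $\rho$ is discrete and faithful. However, there is a genuine gap. The lemma asserts the \emph{equality} $\varepsilon(e,ab,a)=\varepsilon(e,b,ba)$, not merely that each of the two values lies in $\{\pm1\}$, and your argument never addresses this: showing that each triple of boundary points is non-degenerate tells you nothing about whether the two triangles are coherently oriented. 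The equality is not cosmetic; it is precisely what Lemma~\ref{stimacrescita} uses to get $\alpha_f(t^mc)-\alpha_f(c)=2\varepsilon(e,ab,a)(f(m)-f(0))$. If the two signs were opposite, the contributions of the two non-degenerate triangles appearing in the relative fundamental class $c$ would cancel, and the non-triviality argument of Proposition~\ref{prop:final} would collapse.

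The paper closes this gap with a geometric observation: the union of the two ideal triangles $(\overline q,\, ab\cdot\overline q,\, a\cdot\overline q)$ and $(\overline q,\, b\cdot\overline q,\, ba\cdot\overline q)$ is a fundamental domain for the $\G_0$-action on $\H^2$ (the standard ideal quadrilateral for the once-punctured torus, with $ab\cdot\overline q=ba\cdot\overline q$). In particular the two triangles have disjoint interiors and share the diagonal from $\overline q$ to $ab\cdot\overline q$, which is traversed in opposite directions in the two listed orderings; hence they are coherently oriented and carry the same sign. You need this (or some equally explicit computation in a concrete hyperbolization) to finish the proof.
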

\begin{proof}
Recall that, in order to define $\varepsilon$, we chose a finite area hyperbolization $\rho$ of $\G_0$, and we denoted by $\ov q\in\partial \mathbb H^2$ the unique fixed point of $\rho([a,b])=\rho(a^{-1}b^{-1}ab)$. By definition, $\pi\varepsilon(g_0,g_1,g_2)$ is the area of the ideal triangle in $\mathbb H^2$ with vertices $(g_0\cdot \ov q,g_1\cdot \ov q,g_2\cdot \ov q)$.

Since $ba\cdot \ov q=ba\cdot( [a,b]\cdot \ov q)=ab\cdot \ov q$, the ideal triangle with vertices $(\ov q, ba\cdot \ov q, ab\cdot \ov q)$ is degenerate, and hence $\varepsilon(e, ba, ab)=0$. Moreover the union of the two triangles $ (\ov q, ab\cdot \ov q , a\cdot \ov q)$ and $ (\ov q, b\cdot \ov q , ba\cdot \ov q)$ is a fundamental domain for the $\G_0$ action on $\mathbb H^2$, and hence $\varepsilon (e, ab, a)=\varepsilon(e, b, ba)$ are non-zero. The common sign depends on the choice of the generators $a,b$.
\end{proof}

The next lemma shows that the cycle ${A}_m$ encloses a volume proportional to $m$:
\begin{lemma}\label{stimacrescita}
$|\alpha_f({A}_m)|=2|f(m)-f(0)|$.
\end{lemma}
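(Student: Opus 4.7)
The plan is to reduce the computation to the $c$- and $t^m c$-parts of $A_m$, and exploit the fact that for the explicit triangles constituting $c$ the bicombing is trivial or easy to fill by $c$ itself.

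First, I would eliminate the horoball contributions. Writing $A_m = (t^m c - c) + (t^m d_m - d_m) + e_m$, I claim each 2-simplex of $d_m$, $t^m d_m$ or $e_m$ has vertices all in the horoball $\calH$ based on $H$ (since the $\G_0$-part of each vertex is a power of $[a,b]\in H$, and $t$-translates still lie in $H$ via the identity $\psi([a,b])=[a,b]$), and pairwise intrinsic distance at most $2$ inside $\calH$ (a direct check from the defining exponents of $[a,b]$ and the choice of $K_m$). Proposition~\ref{ofeuihwln}(2) then places $\varphi$ of each such simplex inside $\calH$, and Lemma~\ref{estimate:F}(1) (resting on Proposition~\ref{epsilon:prop}) kills $F_f$ on any 2-simplex contained in a horoball. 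Therefore $\alpha_f(A_m) = F_f(t^m \varphi(c) - \varphi(c))$.

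Next, I would analyze the bicombing on the three triangles $\tau_1=((e,0),(b,0),(ba,0))$, $\tau_2=((e,0),(ba,0),(ab,0))$, $\tau_3=((e,0),(ab,0),(a,0))$ whose signed sum is $c$. All vertex pairs are at $X$-distance at most $2$, so Mineyev's piece of the Groves--Manning bicombing $Q$ is (the antisymmetrization of) the length-$1$ or length-$2$ geodesic joining them; a direct computation shows $Q(\partial\tau_1)=Q(\partial\tau_3)=0$ (the two-step bicombings $Q((ba,0),(e,0))$ and $Q((e,0),(ab,0))$ are exactly opposite to the two short edges of their triangles) while $Q(\partial\tau_2)$ equals the pentagon 1-chain $\partial c$. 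Since everything stays at depth $0$ one has $w(\tau_j)=0$, hence $z(\tau_1)=z(\tau_3)=0$ and $z(\tau_2)=\partial c$. I would then choose the relative filling map with $\varphi(\tau_1)=\varphi(\tau_3)=0$ and $\varphi(\tau_2)=c$ itself (viewed as a simplicial 2-chain in $\calX$, which is legitimate since every triangle of $c$ has diameter $\leq 2 \leq \kappa$), extended $\G$-equivariantly to the orbit; this choice manifestly satisfies the norm and support bounds of Proposition~\ref{ofeuihwln} and the existence of a bounded relative filling required by property (4).

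With $\varphi(c)=c$ supported entirely in $\G_0\times\{0\}$, where $\theta\equiv 0$, I would compute
\[
F_f(t^m c - c) \;=\; \sum_{j=1}^3 \bigl(F_f(t^m\tau_j)-F_f(\tau_j)\bigr)\;=\;\bigl(f(m)-f(0)\bigr)\sum_{j=1}^3 \varepsilon(\tau_j),
\]
using that $\varepsilon$ is $\G$-invariant and that $\theta(t^m y)=\theta(y)+m=m$ on every vertex of $t^m\tau_j$. By Lemma~\ref{lem:epsilon(a,b,ab)}, $\sum_j \varepsilon(\tau_j) = \pm 1 + 0 \pm 1 = \pm 2$, yielding $|\alpha_f(A_m)|=2|f(m)-f(0)|$.

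The hardest step is the consistency of the ad hoc choice $\varphi(c)=c$ with the full construction in Proposition~\ref{ofeuihwln} on other orbits, in particular ensuring that property (4) (bounded relative 3-fillings) is preserved. A more robust fallback would be to accept a bounded error: $\varphi(c)-c$ is a $2$-cycle in the contractible complex $\calX$, hence bounds a $3$-chain $\mathcal D$ of bounded norm, and Lemma~\ref{estimate:F}(2) gives $|F_f(\partial \mathcal D)|\leq R\cdot\lip(f)\cdot\|\mathcal D\|_1$; this would yield $|\alpha_f(A_m)|=2|f(m)-f(0)|+O(\lip(f))$, still sufficient to power the proof of Proposition~\ref{prop:final}.
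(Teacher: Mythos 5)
Your proposal follows the same skeleton as the paper's proof: both arguments first kill the contribution of $d_m$, $t^m d_m$ and $e_m$ via Proposition~\ref{ofeuihwln}(2) and Lemma~\ref{estimate:F}(1), reduce to $\alpha_f(t^mc-c)$, and then conclude using Lemma~\ref{lem:epsilon(a,b,ab)}, the $\G$-invariance of $\varepsilon$, and the values of $\theta$ on $\supp(c)$ and $\supp(t^mc)$. The divergence is in how $\varphi$ is evaluated on the three triangles of $c$. The paper asserts that all their vertex pairs are at distance $1$ and applies Proposition~\ref{ofeuihwln}(1) to get $\varphi(\tau_j)=\tau_j$ on the nose; you are right to be uneasy here, since with $S=\{a,b,[a,b],t\}$ one has $d(e,ab)=d(e,ba)=2$, so the hypothesis of item (1) is not literally satisfied (a small inaccuracy in the paper, repaired e.g.\ by enlarging $S$ or by extending case (1) of Proposition~\ref{ofeuihwln} to diameter-$2$ depth-$0$ triples). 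However, your primary fix --- redefining $\varphi$ so that $\varphi(\tau_1)=\varphi(\tau_3)=0$ and $\varphi(\tau_2)=c$ --- does not prove the lemma as stated, because it computes a different quasi-cocycle from the one fixed in Section~\ref{sec:qc}, and its compatibility with property (4) of Proposition~\ref{ofeuihwln} rests on the identities $z(\tau_1)=z(\tau_3)=0$ and $z(\tau_2)=\partial c$, which you have not verified: they depend on the geodesic choices inside Mineyev's bicombing and on the vanishing of the $w$-parts, i.e.\ on the relation between the constants $S_1$ and $L_2$. Your fallback, by contrast, is sound and robust: $\partial\varphi(c)=Q(\partial c)=\partial c$, so $\varphi(c)-c$ is a $2$-cycle in the contractible complex $\calX$ bounding a $3$-chain whose norm is independent of $m$ and $f$, and Lemma~\ref{estimate:F}(2) then gives $|\alpha_f(A_m)|=2|f(m)-f(0)|+O(\lip(f))$. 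Since only this weaker estimate is needed in Proposition~\ref{prop:final}, the fallback both completes your argument and quietly patches the imprecision in the paper's own proof, at the cost of losing the exact equality claimed in Lemma~\ref{stimacrescita}.
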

\begin{proof}
We evaluate $\alpha_f({A}_m)$ term by term. Observe that every simplex in the support of ${A}_m$ has vertices at distance at most 2, and there exists a horoball $\calH$ such that 
$$\supp(d_m+e_m+t^md_m)\subset \calH.$$
It then follows from Proposition \ref{ofeuihwln} (2) that, for each simplex $\sigma$ in the support of $t^md_m+e_m-d_m$, we have $\phi(\sigma)\subset \calH$ and hence 
$$\alpha_f(t^md_m+e_m-d_m)=F_f(\phi(t^md_m+e_m-d_m))=0$$
by Lemma \ref{estimate:F} (1).
Therefore $\alpha_f({A}_m)=\alpha_f(t^mc-c)$.

We know from Lemma \ref{lem:epsilon(a,b,ab)} that $\varepsilon (e, ab, a)=\varepsilon(e, b, ba)=\pm1$ and $\varepsilon(e, ba, ab)=0$. Since $\varepsilon$ is $\G$-invariant (Proposition \ref{epsilon:prop}), and hence in particular $t$-invariant, we also deduce 
$\varepsilon (t^m, t^mab, t^ma)=\varepsilon(e, ab,a)$. Moreover, since all simplices involved in the definition of $c$ and $t^m c$ have vertices at distance at most 1, Proposition \ref{ofeuihwln} (1) implies that $\phi$ is the identity on each of them. 
We now have $\theta(v) = m$ for every vertex $v$ in $\supp(t^mc)$, and $\theta(v) = 0$ for every vertex $v$ in $\supp(c)$, so $\alpha_f(t^mc)-\alpha_f(c)=2\varepsilon(e, ab, a)(f(m)-f(0))$,
and this concludes the proof. 
\end{proof}

\begin{proof}[Proof of Proposition \ref{prop:final} ] Suppose that $f$ is bounded. For every $(x_0, x_1, x_2) \in X^3$, we write 
$$ \varphi(x_0, x_1, x_2) =  \sum_\sigma \lambda_\sigma \sigma$$ with $\sum_\sigma |\lambda_\sigma|\leq T_2$ (see Proposition \ref{ofeuihwln} (3)).
Then
\begin{align*} |\alpha_f(x_0, x_1, x_2)| &= |F_f(\varphi(x_0, x_1, x_2))|\\& \le\frac{1}{3} \sum_\sigma |\lambda_\sigma| \sum_{i=0}^2 |f(\theta(\sigma^{(i)}))| \\& \le T_2 \|f\|_\infty.\end{align*}
%}
This shows that $\delta \alpha_f$ is the coboundary of a bounded $2$--cochain and hence represents 0 in $H^3_b(\G_0\actson X)$.

Vice versa, 
suppose that $f$ is a Lipschitz function such that $\delta\alpha_f = \delta \beta$ for some bounded $\G_0$-invariant cochain $\beta\in C^2_b(\G_0\actson X)^{\G_0}$.
By Lemma~\ref{lem:boundary}, there exists
a $3$--chain $B_m\in C_3(X,\R)_{\red,\G_0}$ with $ \partial {B}_m = {A}_m$.
Recall from Lemma~\ref{lem:boundary} that $\|A_m\|_1\leq 9$, so
\begin{align*}
|\alpha_f({A}_m)| & = |\alpha_f(\partial {B}_m
)| = |(\delta \alpha_f)({B}_m)| = |(\delta \beta)({B}_m)| \\ & = |\beta(\partial {B}_m)| = |\beta({A}_m)|\leq \|\beta\|_\infty\cdot \|A_m\|\leq  9\|\beta\|_\infty\ .
\end{align*}
Therefore, $|\alpha_f({A}_m)|$ is uniformly bounded. By Lemma~\ref{stimacrescita}, this implies that $|f(m)|\leq\left|\frac12\alpha_f({A}_m)\right|+|f(0)|$ is also uniformly bounded, i.e.~that $f$ is bounded,
as desired.
\end{proof}
%%%%%%%%%%%%
\subsection{Proof of Theorem \ref{main:thm}}\label{sec:last}
%%%%%%%%%%
In order to conclude the proof of Theorem~\ref{main:thm} we are now left to construct
 an uncountable set of linearly independent elements in $N^3_0(F_2)\subseteq H_b^3(F_2)$.
%Let $\calL(\Z, \R)$ be the linear space of real Lipschitz functions, and let $\eta \colon \calL(\Z, \R) \to H_b^3(F(a, b), \R)$ be the linear map which associates $f$ to $\delta \alpha_f$ defined in the preceding sections. 
For every $f\in \calL(\Z,\R)$ we set 
$$
f_n(x) := \left\{ \begin{array}{ll}
f(x) - f(-n) & \textrm{ if } x \le -n \\
0 & \textrm{ if } -n \le x \le n \\
f(x) - f(n) & \textrm{ if } x \ge n \\
\end{array}
\right.
$$
and 
$$\calL^0(\Z, \R)= \left\{f\in \calL(\Z, \R)\, |\, \lim_{n\to \infty} \lip(f_n)=0\right\}\ .$$ 
We choose the basepoint $x=(e,0)\in X$ and we
set
$$
\eta\colon\calL (\Z, \R)\to H_b^3(\G_0)\, ,\quad \eta(f)=w^3_x([\delta \alpha_f])\ ,
$$
where $w^3_x\colon H^3_b(\G_0\actson X)\to H^3_b(\G_0)$ is the map described in Lemma~\ref{isometric:isom}.

Recall from Lemma~\ref{isometric:isom}
that the complex $C^*_{b,\alt}(\G_0\actson X)^{\G_0}$ isometrically computes the bounded cohomology of $\G_0$, so by Proposition~\ref{prop:final} the map 
$\eta$ induces an isomorphism between $\calL^0(\Z, \R)/(\calL^0(\Z,\R)\cap\ell^\infty(\Z))$ and $\eta(\calL^0(\Z, \R))\subseteq H_b^3(\G_0)$. 
It is immediate to realize that the dimension of the real vector space $\calL^0(\Z, \R)/(\calL^0(\Z,\R)\cap\ell^\infty(\Z))$ is equal to the cardinality of the continuum: for example, the classes of the maps
$n\mapsto n^\alpha$, $\alpha\in (0,1)$, define linear independent elements in $\calL^0(\Z, \R)/(\calL^0(\Z,\R)\cap\ell^\infty(\Z))$. Therefore, Theorem~\ref{main:thm}
is now reduced to the following:

\begin{prop}
For every function $f\in \calL^0(\Z, \R)$ we have $\eta(f)\in N_0^3(\G_0)$.
%\|[\delta\alpha_f]\|_\bah=0$.
\end{prop}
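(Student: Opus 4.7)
The strategy is to exploit the linearity of $\alpha$ together with Proposition~\ref{prop:final} to show that $\eta(f) = \eta(f_n)$ for every $n$, and then to produce, for each finite set $S \subseteq \Gamma_0$, a representative of $\eta(f)$ that vanishes on $S^3$ and whose coboundary has arbitrarily small $\ell^\infty$--norm, exploiting Theorem~\ref{volumeform:thm}~(1) for the function $f_n$.

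The first step is the observation that $f - f_n$ is a bounded Lipschitz function (it equals $f$ on $[-n,n]$ and is constant outside), so Proposition~\ref{prop:final} gives $[\delta \alpha_{f - f_n}] = 0$ in $H^3_b(\Gamma_0 \actson X)$. By linearity of $\alpha$, $[\delta \alpha_f] = [\delta \alpha_{f_n}]$ for every $n$, and consequently $\eta(f) = \eta(f_n)$.

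The key technical step is then to show that, given any finite $S \subseteq \Gamma_0$, there exists $N_S \in \mathbb{N}$ such that for every $n \ge N_S$ the invariant cochain
$$ \varphi_n := w^2_x(\alpha_{f_n}) \in C^2(\Gamma_0)^{\Gamma_0} $$
vanishes on $S^3$. Since $\alpha_{f_n}(g_0 x, g_1 x, g_2 x) = F_{f_n}(\phi(g_0 x, g_1 x, g_2 x))$ and $F_{f_n}(\sigma)$ only involves the values of $f_n$ at the $\theta$--coordinates of the vertices of $\sigma$, it suffices to show that the $\theta$--values of every vertex appearing in $\phi(g_0 x, g_1 x, g_2 x)$ are uniformly bounded as $g_0, g_1, g_2$ range over $S$. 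By Proposition~\ref{ofeuihwln} and the explicit description in its proof, the support of $\phi(g_0 x, g_1 x, g_2 x)$ is either $\{g_0x,g_1x,g_2x\}$ itself (case (1)), or contained in $B(g_0 x, 3)$ (case (2)), or contained in a universal $S'_1$--neighborhood of a geodesic triangle in $X$ with vertices $g_0 x, g_1 x, g_2 x$ (case (3)). Since the distances $d_X(g_i x, g_j x)$ are uniformly bounded by some $D_S$ depending only on $S$, the relevant geodesic triangles have uniformly bounded length. Tracking how $\theta$ changes along an edge of $X$ --- it is constant on vertical horoball edges, changes by at most $1$ on Cayley edges (since $\theta$ is a homomorphism), and changes by at most a constant times $2^k$ on a horizontal horoball edge at depth $k$, while the depth attainable along a path of length $L$ starting at depth $0$ is at most $L$ --- then yields the required uniform bound $N_S$.

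Once this uniform bound is established, the conclusion is immediate: for $n \ge N_S$, $f_n$ vanishes on $[-N_S, N_S]$, hence $F_{f_n}$ annihilates every $2$--simplex appearing in $\phi(g_0 x, g_1 x, g_2 x)$, so $\varphi_n|_{S^3} = 0$. Together with $[\delta \varphi_n] = w^3_x([\delta \alpha_{f_n}]) = \eta(f)$ and $\|\delta \varphi_n\|_\infty \le \|\delta \alpha_{f_n}\|_\infty \le C \cdot \lip(f_n)$ coming from Theorem~\ref{volumeform:thm}~(1), this gives $\|\eta(f)\|_S \le C\cdot \lip(f_n)$. Letting $n \to \infty$ and using $f \in \mathcal{L}^0(\Z, \R)$ produces $\|\eta(f)\|_S = 0$; since $S$ was arbitrary, $\|\eta(f)\|_\bah = 0$, which combined with $\eta(f) \in EH^3_b(\Gamma_0)$ (as the image under the isometric isomorphism $w^3_x$ of the exact class $[\delta\alpha_f]$) shows $\eta(f) \in N^3_0(\Gamma_0)$. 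The main obstacle is the uniform bound on the $\theta$--values within $\supp(\phi(g_0 x, g_1 x, g_2 x))$: $\theta$ is not globally Lipschitz on $X$ because horizontal horoball edges at large depth can shift $\theta$ arbitrarily, so some care is needed in combining the geometric information from Section~\ref{bicombing:sec} with the three cases of Proposition~\ref{ofeuihwln}.
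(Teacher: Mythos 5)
Your proof is correct and follows essentially the same strategy as the paper: replace $f$ by the truncations $f_n$, note $\eta(f_n)=\eta(f)$ by Proposition~\ref{prop:final}, and use the vanishing of $w_x^2(\alpha_{f_n})$ on $S^3$ for $n$ large together with $\Def(\alpha_{f_n})\leq K\lip(f_n)\to 0$. The only difference is that your ``key technical step'' is more elaborate than needed: since $S$ is finite and each chain $\phi(g_0x,g_1x,g_2x)$ has finite support, $\theta$ is automatically bounded on the union of these supports, so no uniform geometric estimate on $\supp(\phi)$ is required.
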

\begin{proof}
For every $n\in\mathbb{N}$ we have $\|f - f_n\|_\infty=\max\left\{\left|f|_{[-n,n]}\right|\right\} < \infty$, hence by Proposition~\ref{prop:final} we have $\eta(f_n)=\eta(f)$ for every $n\in\mathbb{N}$.
Therefore, for every $n\in\mathbb{N}$ the cochain  $w_x^2(\alpha_{f_n})\in C^2(\G_0)$ is a primitive of $\eta(f)$.
%Moreover, by Proposition \ref{quasi-cocycle:prop}, we also have $\|\delta \alpha_{f_n}\|_\infty \le K \lip(f_n)$ and hence $\|[\delta\alpha_f]\|=0$. 
%We will now show that even with the new norm introduced in Section \ref{sec:2} we have $\|[\delta\alpha_{f}]\|_\bah=0$.

Let us fix the exhaustion $(S_i)_{i\in\N}$ of $\G_0$ given by $$S_i=\{\g\in\G_0\, |\, d_{\G_0}(\g,e)\leq i\}.$$ 
For every $i$, we can choose $n_i$ big enough so that $w_x^2(\alpha_{f_{n_i}})|_{S_i^{3}}=0$: indeed the set $S_i$ is finite, and for each triple $(s_0,s_1,s_2)\in S_i^3$, 
the simplicial 2-chain $\phi((s_0,0),(s_1,0),(s_2,0))$ involves only a finite number of simplices. In particular we can find $n_i$ such that $|\theta(\supp\phi((s_0,0),(s_1,0),(s_2,0)))|\leq n_i$ for every 
$(s_0,s_1,s_2)\in S_i^3$, and for such $n_i$ we have $w_x^2(\alpha_{f_{n_i}})|_{S_i^{3}}=0$. Clearly we can also suppose that the sequence $\{n_i\}_{i\in\N}$ is monotonically diverging to $\infty$. 

Recall now that Proposition \ref{quasi-cocycle:prop} ensures that $\|\delta \alpha_{f_{n_i}}\|_\infty \le K \lip(f_{n_i})$, so since $f\in \calL^0(\Z,\R)$ we have
$\lim_{i\to\infty} \|\delta \alpha_{f_{n_i}}\|_\infty=0$.
Therefore, since $w_x^n$ is norm non-increasing, by Lemma \ref{elementary:seminorm} we finally get
$$\|\eta(f)\|_\bah\leq \liminf_{i\to\infty}\|\delta w_x^2(\alpha_{f_{n_i}})\|_\infty\leq\liminf_{i\to\infty}\|\delta \alpha_{f_{n_i}}\|_\infty 
=0\ .$$

\end{proof}
 \section{Appendix: volumes of mapping tori}
We use the techniques introduced in the paper to give a cohomological proof of (some particular cases) of an inequality due to Brock \cite[Theorem 1.1]{Brock}.

Let $\Sigma_g$ be the closed oriented surface of genus $g$, $g\geq 2$.
If $\psi\colon\Sigma_g\to\Sigma_g$ is a pseudo-Anosov homeomorphism, we denote by $M_\psi$ the mapping torus 
$$M_\psi:=\Sigma\times[0,1]/(x,0)\equiv(\psi(x),1).$$  
Recall that a pseudo-Anosov homeomorphism $\psi:\Sigma_g\to\Sigma_g$ is $\epsilon$-cobounded if the image of its Teichm\"uller axis in the moduli space $\cal M_g$ stays in the $\epsilon$-thick part $\cal M_g^\epsilon$ \cite[Section 2.1]{Farb-Mosher}.
 We denote by $\tau(\psi)$ the translation length of $\psi$ on the Teichm\"uller space endowed with the Teichm\"uller metric, which is well known to be equal to its maximal dilatation $\lambda$ \cite{Bers}.  For $\psi$ $\epsilon$-cobounded, it is 
 well known that the Teichm\"uller translation length is also uniformly bi-Lipschitz equivalent to the Weil-Petersson translation length. This can be seen as follows. Distances in both the Teichm\"uller and the Weil-Petersson metric can be computed, 
up to bounded multiplicative and additive error, in terms of the so-called subsurface projections to curve complexes of subsurfaces; this is known as the distance formula, see \cite[Theorem 6.12]{MM2}\cite[Theorem 4.4]{Brock-pants} for the Weil-Petersson case and \cite[Theorem 
1.1]{Rafi-DF} for the Teichm\"uller case. The difference between the distance formulas is that annular subsurfaces do not contribute in the Weil-Petersson case, while they do in the Teichm\"uller case. As observed in, e.g., \cite[Theorem 3.1]{KL-subgroups},
 it follows from \cite{Rafi-short} that in the $\epsilon$-cobounded case all subsurface projections to curve complexes of \emph{proper} subsurfaces are bounded, so that both in the Teichm\"uller and in the Weil-Petersson case the distance formula 
 only has one non-zero term, the one corresponding to the whole surface, easily implying the desired relation between translation distances.

The purpose of the appendix is to give a different proof of the lower bound of the volume of $M_\psi$ in terms of the dilation of $\psi$, when $\psi$ is an $\epsilon$-cobounded pseudo-Anosov. 

\begin{thm}\label{thm:A1}
There exists a constant $C>0$ depending only on $\epsilon$ and $g$ such that, for any $\epsilon$-cobounded pseudo-Anosov $\psi:\Sigma_g\to\Sigma_g$, we have
$$\vol(M_\psi)\geq C\tau(\psi).$$
\end{thm}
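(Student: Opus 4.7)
The plan is to lower-bound the simplicial volume $\|M_\psi\|_\Delta$ via bounded-cohomology duality and then invoke Gromov--Thurston proportionality $\vol(M_\psi)=v_3\|M_\psi\|_\Delta$ to conclude. Concretely, with $\Gamma=\pi_1(\Sigma_g)\rtimes_\psi\mathbb{Z}$, I will produce a class $\beta\in H_b^3(\Gamma)$ satisfying $\|\beta\|_\infty\leq K_1(\epsilon,g)$ and $|\langle\beta,[M_\psi]\rangle|\geq K_2(\epsilon,g)\,\tau(\psi)$; duality then yields $\|M_\psi\|_\Delta\geq (K_2/K_1)\tau(\psi)$. Throughout, the $\epsilon$-coboundedness hypothesis is used to keep the hyperbolicity and filling constants uniform in $\psi$.

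The class $\beta$ is built by adapting Section~\ref{sec:4} to the closed-surface setting. Since $M_\psi$ is hyperbolic, $\Gamma$ is itself hyperbolic and the constructions of Section~\ref{bicombing:sec} apply to the Cayley graph $X$ of $\Gamma$ \emph{without any horoballs}: the Rips complex $\mathcal{X}$ is contractible, and one gets a $\Gamma$-equivariant relative filling $\phi\colon X^3\to C^\Delta_2(\mathcal{X})_{\red}$ with constants depending only on $\epsilon$ and $g$. Following Section~\ref{sec:qc} with $f=\mathrm{id}$, I set $\alpha(x_0,x_1,x_2)=F_{\mathrm{id}}(\phi(x_0,x_1,x_2))\in \QCa^2(\Gamma\curvearrowright X)^{\Gamma_0}$, using the $\Gamma_0$-invariant area cocycle $\varepsilon$ associated to a hyperbolization of $\Gamma_0$, and the projection $\theta\colon X\to\mathbb{Z}$ on the semidirect-product factor. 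Although $\alpha$ is only $\Gamma_0$-invariant, a direct calculation shows $t\cdot\alpha-\alpha=-\varepsilon\circ\phi$, which is bounded by $T_2$ and is a $2$-cocycle (because $\varepsilon$ induces a cocycle on $\mathcal{X}$ and $\phi$ is a relative filling, so $\varepsilon(\phi(\partial\,\cdot))=\varepsilon(\partial B)=(\delta\varepsilon)(B)=0$). Hence $\delta\alpha$ is $t$-invariant, whence $\Gamma$-invariant, and the analogue of Proposition~\ref{quasi-cocycle:prop} gives $\|\delta\alpha\|_\infty\leq K_1$; set $\beta=[\delta\alpha]$.

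The delicate step is evaluating $\langle\beta,[M_\psi]\rangle$. I represent the fundamental class by a combinatorial $3$-cycle $Z\in C_3(\Gamma\curvearrowright X)_\Gamma$ modelled on the $A_m$ construction of Section~\ref{sec:4}: a fundamental $2$-cycle $c_\Sigma$ of $\Sigma_g$ lying over level $\theta=0$, its $t^n$-translate lying over level $\theta=n$, and a prism $D$ interpolating between them. The $t$-shift identity above turns the pairing into an expression proportional to $n$ times an evaluation of the area cocycle on $c_\Sigma$, in close analogy with Lemma~\ref{stimacrescita}. The naive choice $n=1$ produces only the bounded constant $\chi(\Sigma_g)$, so the main obstacle is to find a combinatorially efficient representative of $[M_\psi]$ that is forced to sweep across $\asymp\tau(\psi)$ many vertical levels.

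This is where the cobounded hypothesis is essential. As recalled in the paragraph preceding the theorem, in the cobounded regime the Teichm\"uller translation length $\tau(\psi)$ is bi-Lipschitz equivalent (with constants depending only on $\epsilon$ and $g$) to the translation length of $\psi$ on the curve complex $\mathcal{C}(\Sigma_g)$. Combined with the relative isoperimetric inequality of Theorem~\ref{cicli in quasi grafi}, one shows that any $3$-chain in $\mathcal{X}$ whose boundary contains both $c_\Sigma$ and $\psi^k(c_\Sigma)$ at widely separated levels must traverse a number of vertical layers proportional to the curve-complex distance $d_{\mathcal{C}}(\mu,\psi^k(\mu))\asymp k\tau(\psi)$; choosing $k$ appropriately and averaging, every such filling contributes at least $c\tau(\psi)$ non-trivial elementary tetrahedra to the pairing with $\delta\alpha$. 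This yields $\langle\beta,[M_\psi]\rangle\geq K_2\tau(\psi)$ and, together with the upper bound $\|\beta\|_\infty\leq K_1$ and Gromov--Thurston proportionality, completes the proof.
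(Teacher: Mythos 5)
Your overall template (a combinatorial primitive of the volume form, a uniform bound on its defect, evaluation against a combinatorial fundamental class, then Gromov--Thurston proportionality) is the same as the paper's, but the proof breaks down at the evaluation step, and you have in fact put your finger on the problem without resolving it. If $X$ is the Cayley graph of $\Gamma$ and $\theta$ is the projection to the $\mathbb Z$-factor of the semidirect product, then $\theta(t\cdot x)=\theta(x)+1$, so for any cycle $M$ with $\partial M=t\cdot S-S$ one gets $\delta\alpha(M)=\alpha(t\cdot S)-\alpha(S)=\sum_\sigma c(\sigma)\varepsilon(\sigma)=\pm 2\chi(\Sigma_g)$, a constant. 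The pairing $\langle\beta,[M_\psi]\rangle$ is a cohomological invariant: it does not depend on which cycle represents $[M_\psi]$, so no choice of a ``combinatorially efficient representative'' can make it larger. Your final paragraph conflates the number of simplices appearing in a filling (which might indeed grow like $\tau(\psi)$, but which bounds neither the simplicial volume of $M_\psi$ nor the pairing) with the value of the pairing itself. As stated, your argument proves only $\|M_\psi\|\geq 2|\chi(\Sigma_g)|/K_1$, with no dependence on $\tau(\psi)$.

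The paper's fix is not to change the representative but to change the space: $X$ is \emph{not} the Cayley graph of $\Gamma$ but a combinatorial model of the canonical $\mathbb H^2$-bundle over the Teichm\"uller axis $l$ of $\psi$, built from fibers $X_n$ quasi-isometric to $\mathbb H^2_{\lambda n}$ spaced at Teichm\"uller distance $\lambda\in(0.5,1]$ along $l$, joined by vertical edges. With $\theta(g,n)=n\lambda$ one then has $\theta(t\cdot x)=\theta(x)+\tau(\psi)$, so a \emph{single} application of the $t$-shift identity already produces $\delta\alpha(M)=-2\chi(\Sigma_g)\tau(\psi)$. The role of $\epsilon$-coboundedness is exactly to keep this rescaled graph under control: it gives a uniform bound on $\diam(\mathbb H^2_s/\Gamma_0)$, hence uniformly bounded valency of $X$, and the Farb--Mosher/Bestvina--Feighn combination theorem gives a hyperbolicity constant $\delta(g,\epsilon)$, so that Mineyev's filling constants and therefore $\|\delta\alpha\|_\infty\leq 2\kappa T_3$ are uniform in $\psi$. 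Your appeal to curve-complex translation lengths and to the relative isoperimetric inequality of Theorem~\ref{cicli in quasi grafi} (which concerns cusped graphs and is not needed in this cocompact setting) does not substitute for this rescaling.
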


\begin{remark}
Brock proves \cite[Theorem 1.1]{Brock} that there is a constant $K$ depending only on the genus of the surface such that $$\frac{1}{K}\|\psi\|_{WP}\leq \vol(M_\psi)\leq K\|\psi\|_{WP}$$
for every pseudo-Anosov homeomorphism $\psi\colon \Sigma_g\to \Sigma_g$, where $\|\cdot \|_{WP}$ denotes the translation length of $\psi$ with respect to the Weil-Petersson metric. 
The lower bound is deduced  from the fact that in $M_\psi$ there are at least $\|\psi\|_{WP}$ short curves with disjoint Margulis tubes, each of which gives a definite contribution to the volume.

Upper bounds on the volume in terms of different translation lengths and with an explicit dependence on the genus are known: 
for any pseudo-Anosov $\psi\colon \S\to\S$,
Kojima and McShane \cite[Theorem 2 and Proposition 12]{KoMc} prove the inequality 
$$3\pi|\chi(\S)|\tau(\psi)\geq \vol(M_\psi)\ ,$$
while  Brock
 and Bromberg \cite{BrBr}  prove that $$\sqrt{3\pi/2(2g-2+n)}\|\psi\|_{WP}\geq \vol(M_\psi)\ .$$ 
\end{remark}

\subsection*{Proof summary}
Denote by $\G$ the fundamental group of $M_\psi$, and set $\G_0=\pi_1(\Sigma_g)$ so that $\G=\G_0\ast_{\psi_*}$, where $\psi_*$ denotes the automorphism of $\G_0$ induced by $\psi$. 
The strategy of our proof of Theorem \ref{thm:A1} is based on the ideas developed in the main paper: we construct an explicit combinatorial cocycle representing some multiple of the volume form of the three manifold $M_\psi$ and we compute its value on a suitable fundamental class.

In order to define our cocycle, we will first construct, as in Section \ref{sec:4}, a graph $X$ which is a discrete approximation of $\wt{M_\psi}$. As in the case of the graph considered in Section \ref{sec:4}, 
$X$ admits a $\G$-action, a $\G_0$-equivariant projection $p:X\to \G_0$, a $\G$-equivariant, 1-Lipschitz projection $\theta:X\to \R$. Furthermore $X$ is uniformly $\delta$ hyperbolic and has uniformly bounded degree. 
Therefore, as a consequence of Mineyev's Theorem, a suitable Rips complex $\calX$ over it admits a homological filling $\phi:X^3\to C^\Delta_2(\calX)$ with uniformly bounded norm, and uniformly bounded filling (Lemma \ref{lem:A2}).

Using the same ideas as in Section \ref{bicombing:sec} we use $\phi$ to construct a combinatorial primitive of the volume form: a $\G$-invariant quasi-cocycle $\alpha\in\QC^2(\Gamma\actson X)$. In Section \ref{sec:voles} we will use $\alpha$ to give a lower bound on the simplicial volume of $M_\psi$ and therefore on its hyperbolic volume.  

  \subsection{The graph $X$, a combinatorial approximation of $M_\psi$}
We assume (up to raising $\psi$ to a suitable power) that $\tau(\psi)$ is at least one. 
Let $\lambda$ be the sub-multiple of $\tau(\psi)$ in the interval $(0.5,1]$, set $k=\tau(\psi)/\lambda$. 

We denote by $l$ the Teichm\"uller axis of $\psi$ and choose a basepoint $0$ on $l\cong \R$. The group $\G$ acts on the canonical $\H^2$-bundle over $l$ \cite[page 107]{Farb-Mosher} and in particular, 
for each $s$, the subgroup $\G_0$ acts by isometries on the fiber at time $s$, that we denote by $\H^2_s$.  We also choose a basepoint $b_0$ on $\H^2_0$. The unique isometric lift of $l$ through $b_0$ allows us to choose coherent basepoints $b_s$ for each fiber.

%Since the area of $\H^2_s/ \G_0$ only depends on the genus, considering areas of neighborhoods of geodesics connecting points of $\H^2_s/ \G_0$ one gets:
\begin{lemma}
There is a constant $C$, depending on $g$ and $\epsilon$ only, such that for any $s\in\R$ we have
$${\rm diam}(\H^2_s/ \G_0)\leq C.$$
\end{lemma}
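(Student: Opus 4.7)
The plan is to identify the quotient $\mathbb{H}^2_s/\Gamma_0$ with the hyperbolic surface corresponding to the point $s$ on the Teichm\"uller axis $l$, and then combine the cobounded hypothesis (which gives a uniform lower bound on the injectivity radius) with Gauss--Bonnet (which gives a fixed area) to obtain a uniform upper bound on the diameter.

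More precisely, first I would recall that the canonical $\mathbb{H}^2$-bundle over $l$ is constructed so that the fiber $\mathbb{H}^2_s$ is the universal cover of the hyperbolic surface $S_s$ representing the point $s \in l \subseteq \mathcal{T}_g$, and that the $\Gamma_0$-action on $\mathbb{H}^2_s$ is by deck transformations, so that $\mathbb{H}^2_s/\Gamma_0 \cong S_s$ isometrically. The $\epsilon$-coboundedness of $\psi$ says that the projection of $l$ to moduli space stays in $\mathcal{M}_g^\epsilon$, hence for every $s \in \R$ the surface $S_s$ lies in $\mathcal{M}_g^\epsilon$. By the standard definition of the thick part, this means that every closed geodesic in $S_s$ has length at least $\epsilon$, equivalently the injectivity radius of $S_s$ is at least $\epsilon/2$.

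The remaining step is purely a matter of hyperbolic geometry: a closed hyperbolic surface $S$ with $\mathrm{inj}(S) \geq r > 0$ and area $A$ has diameter bounded in terms of $r$ and $A$ only. Indeed, if $x_0, x_N$ are points realizing the diameter $D$, choose points $x_0, x_1, \dots, x_N$ along a minimizing geodesic with $d(x_i, x_{i+1}) = r/2$ and $N \geq \lfloor 2D/r \rfloor$. The metric balls $B(x_i, r/4)$ are pairwise disjoint, and each lifts isometrically to a hyperbolic ball of radius $r/4$ in $\mathbb{H}^2$ (since the injectivity radius exceeds $r/4$), hence has area $a(r) = 2\pi(\cosh(r/4)-1) > 0$. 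Summing gives $N \cdot a(r) \leq A$, so $D \leq (A/a(r)+1)\cdot r/2$. Applying this with $r = \epsilon/2$ and $A = 2\pi(2g-2)$ (by Gauss--Bonnet) yields the desired bound $C = C(g,\epsilon)$.

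No serious obstacle is expected: the only thing to double-check is the identification of the $\Gamma_0$-action on the fibers with the Fuchsian deck action of the corresponding point in Teichm\"uller space, which is built into the construction of the canonical bundle as used in \cite{Farb-Mosher}.
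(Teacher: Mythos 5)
Your proof is correct and follows essentially the same route as the paper: coboundedness gives a uniform lower bound on the systole (hence injectivity radius), and then an area-packing argument along a length-minimizing geodesic combined with Gauss--Bonnet bounds the diameter. The only cosmetic difference is that you pack disjoint embedded balls along the geodesic, whereas the paper uses the embedded $\epsilon/4$-neighborhood of the whole geodesic, whose area grows linearly in its length.
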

\begin{proof}
The homomorphism $\psi$ is $\epsilon$-cobounded, so every geodesic loop in $\Sigma_s=\mathbb H^2_s/\G_0$ has length at least $\epsilon$. 
This readily implies that the $\epsilon/4$-neighborhood of any length-minimizing geodesic of $\Sigma_s$ is isometric to the $\epsilon/4$-neighborhood of a geodesic of length $L$ in $\mathbb H^2$.
% embedded in $\Sigma_s$.

%for any pair of points $x,y\in \Sigma_s$ the $\epsilon/4$-neighborhood of any length-minimizing geodesic $\gamma_{x,y}$ between $x,y$ is embedded. 
%Otherwise we would find points $t,s$ in $\g_{x,y}$ with $B(t,\epsilon/4)\cap B(s,\epsilon/4)\neq \emptyset$. This implies that $d(t,s)< \epsilon/4$, on the other hand, since $\Sigma_s$ has systole bounded below by $\epsilon$, we get  that the subpath of $\g_{x,y}$ connecting $t$ and $s$ has length bigger than $\epsilon/4$, contraddicting the fact that $\gamma_{xy}$ is distance minimizing. 
%qui
Since the area of the $\epsilon$-neighborhood of a geodesic of length $L$ grows linearly with $L$, and the area of $\Sigma_s$ is equal to $2\pi\chi(\Sigma)$, this provides the desired upper bound on
the lengths of minimizing geodesics in $\Sigma_s$, i.e.~on the diameter of $\H^2_s/ \G_0$.
\end{proof}

For each $n\in \Z$ denote by $X_n$ the graph whose vertex set is $\G_0\times \{n\}$ and with the property that two vertices $(g,n),(h,n)$ are joined by an edge if and only if $d(gb_{\lambda n},hb_{\lambda n})\leq 6C$. By Milnor-Svarc Lemma $X_n$ is $(6C,1)$-quasi-isometric to $\H^2_{\lambda n}$ (see, for example, the proof in  \cite[Proposition I.8.19]{BH}). Moreover $X_n$ has valency bounded above by $D(\epsilon,C)$.
The graph $X$ is the union of the $X_n$ with horizontal edges of type $((g,n),(g,n+1))$.

Observe that  there are a natural $\G$-action on $X$, a natural projection $p:X^{(0)}=\G_0\times \Z\to \G_0$ and a natural 1-Lipschitz map $\theta:X^{(0)}\to \R$ defined by $\theta(g,n)=n\lambda$. 

The graph $X$ is uniformly quasi-isometric to the canonical $\H^2$-bundle over $l$. Farb-Mosher \cite[page 145]{Farb-Mosher} use Bestvina-Feighn's combination theorem to show: 
\begin{prop}
There exists $\delta=\delta(g,\epsilon)$ such that $X$ is $\delta$-hyperbolic.
\end{prop}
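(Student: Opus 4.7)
The plan is to deduce the proposition from two ingredients: first, that $X$ is uniformly quasi-isometric to the canonical hyperbolic plane bundle $B\to l$ introduced by Farb--Mosher, with quasi-isometry constants depending only on $g$ and $\epsilon$; and second, that $B$ itself is $\delta_0(g,\epsilon)$-hyperbolic by Bestvina--Feighn's combination theorem as applied in \cite{Farb-Mosher}. Since Gromov hyperbolicity is a quasi-isometry invariant with explicit control of constants, this would yield $X$ is $\delta(g,\epsilon)$-hyperbolic.

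To make the quasi-isometry precise, I would first examine the horizontal slices. Each $X_n$ has vertex set the $\G_0$-orbit $\G_0 \cdot b_{\lambda n}\subset \H^2_{\lambda n}$, and two vertices are joined when their basepoints lie at distance at most $6C$, where $C=C(g,\epsilon)$ bounds $\mathrm{diam}(\H^2_s/\G_0)$. Since $6C$ is more than twice this diameter, Milnor--\v{S}varc applies uniformly and produces a $(K_1,K_1)$-quasi-isometry $X_n\to \H^2_{\lambda n}$ with $K_1=K_1(g,\epsilon)$. The vertical edges $((g,n),(g,n+1))$ model the canonical flow on $B$ over a step of length $\lambda\in(1/2,1]$; since the flow maps between nearby fibers are uniformly bi-Lipschitz on $B$ (this uses $\epsilon$-coboundedness again, as away from thin-part regions the flow is uniformly tame), these edges are compatible with a bounded-distortion identification. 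Splicing the slice-quasi-isometries along the vertical direction then gives an explicit $(K_2,K_2)$-quasi-isometry $X\to B$ with $K_2=K_2(g,\epsilon)$.

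Second, I would invoke Farb--Mosher's theorem that the canonical $\H^2$-bundle over the Teichm\"uller axis of an $\epsilon$-cobounded pseudo-Anosov is $\delta_0$-hyperbolic with $\delta_0=\delta_0(g,\epsilon)$. The key point is that the $\epsilon$-coboundedness hypothesis ensures uniform flaring, so Bestvina--Feighn's combination theorem produces a hyperbolicity constant that depends only on $g$ and $\epsilon$ rather than on $\psi$ itself. Quasi-isometry invariance of hyperbolicity (applied with constants $K_2$ and $\delta_0$) then delivers the required $\delta=\delta(g,\epsilon)$.

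The main obstacle is bookkeeping the uniform dependence of every constant on $g$ and $\epsilon$ only. The uniformity of $C$ (already established), of $K_1$ (Milnor--\v{S}varc with a uniform cocompactness constant), and of $\delta_0$ (from uniform flaring) all ultimately rest on $\epsilon$-coboundedness; any step where a constant secretly depends on $\psi$ (for instance through the dilatation $\lambda$) would break the argument. The boundedness of $\lambda\in(1/2,1]$, engineered in the construction of $X$, is precisely what insulates the estimates from the dilatation of $\psi$.
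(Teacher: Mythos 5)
Your proposal follows the same route as the paper: the paper's (very terse) justification is exactly that $X$ is uniformly quasi-isometric to the canonical $\H^2$-bundle over the Teichm\"uller axis, whose uniform hyperbolicity for $\epsilon$-cobounded pseudo-Anosovs is due to Farb--Mosher via the Bestvina--Feighn combination theorem. You simply spell out the quasi-isometry and the constant-tracking in more detail than the paper does, which is fine.
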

%\begin{proof}
%We check that the conditions of Bestvina-Feighn's combination Theorem \cite{BF} hold with constants only depending on $g$ and $\epsilon$ (cfr. also \cite[Lemma 5.4]{Farb-Mosher}). 
%Each fiber is $\delta'$-hyperbolic being $(6C,1)$-quasi-isometric to $\H^2$. We need to verify the qi-embedded condition and the anuli flare. For this purpose observe that the graph $X$ is uniformly quasi-isometric to $\wt {M_\psi}$ endowed with the singular Sol structure. 
%\end{proof}

Denote by $\calX$ the Rips complex over $X$ with constant $\kappa\geq 4\delta+6$. Since the graph $X$ is hyperbolic and has bounded valency, it admits a homological bicombing with a good filling $\phi$: %For each triple $x_0,x_1,x_2$ the chain $\phi(x_0,x_1,x_2)$ should be thought of as a \emph{combinatorial straight simplex} with vertices $(x_0,x_1,x_2)$.  
\begin{lemma}\label{lem:A2}
There exist a constant $T_3\in\R$ depending on $\epsilon$ and $g$ only, and a $\G$-equivariant map $\phi:X^3\to C^\Delta_2(\calX)_\red$ such that
\begin{enumerate}
\item $\phi(x_0,x_1,x_2)=[x_0,x_1,x_2]$ if $d(x_i,x_i)\leq \kappa$;
\item for any 4-tuple $(x_0,x_1,x_2,x_3)$ of vertices of $X$ there exists $B\in C^\Delta_3(\calX)$ with $\phi(\partial(x_0,x_1,x_2,x_3))=\partial B$ and  $\|B\|_1\leq T_3$.
\end{enumerate} 
\end{lemma}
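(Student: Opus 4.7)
The plan is to mirror the proof of Proposition \ref{ofeuihwln}, adapted to the considerably simpler setting of this appendix: no horoballs are present, the graph $X$ is uniformly $\delta$--hyperbolic with uniformly bounded valency, and $\Gamma$ acts cocompactly on $X$ since $M_\psi$ is closed. The two technical inputs I will use are (i) a $\Gamma$--equivariant quasi-geodesic homological bicombing on $X$ and (ii) a uniform filling statement for cycles in $\mathcal{X}$ supported near geodesic segments.

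For (i), I would apply Mineyev's construction \cite{Mineyev1} on the bounded-valence hyperbolic graph $X$ to obtain a $\Gamma$--equivariant antisymmetric $S_1$--quasi-geodesic bicombing $Q\colon X\times X\to C_1^{\Delta}(X)_{\red}$ with $\partial Q(x,y)=y-x$, such that $Q(x,y)=[x,y]$ whenever $d(x,y)\leq \kappa$ and such that $\|Q(x,y)\|_1\leq S_1 d(x,y)$. Since geodesic triangles in $X$ are $\delta$--thin and $X$ has bounded valency, for every triple $(x_0,x_1,x_2)$ the cycle $Q(\partial(x_0,x_1,x_2))$ is supported in an $S_1$--neighborhood of a geodesic triangle, with $\ell^1$--norm bounded by a universal constant $T_1=T_1(\epsilon,g)$. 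For (ii), because $\mathcal X$ is contractible and $\Gamma$ acts cocompactly on $X$ with uniformly bounded valency, a standard argument (analogous to, but much simpler than, Theorem \ref{cicli in quasi grafi}) yields for every $S,k\in\mathbb{N}$ a constant $M=M(S,k,\epsilon,g)$ such that every reduced cycle $a\in Z_k^{\Delta}(\mathcal X)_{\red}$ whose support lies in the $S$--neighborhood of finitely many geodesic segments admits a filling $b\in C_{k+1}^{\Delta}(\mathcal X)_{\red}$ with $\partial b=a$ and $\|b\|_1\leq M\|a\|_1$.

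I would then define $\phi$ on a set of $\Gamma$--orbit representatives of triples $(x_0,x_1,x_2)\in X^3$ as follows: if $d(x_i,x_j)\leq \kappa$ for all $i,j$, put $\phi(x_0,x_1,x_2)=[x_0,x_1,x_2]$; otherwise, let $\phi(x_0,x_1,x_2)$ be any filling of $Q(\partial(x_0,x_1,x_2))$ provided by the uniform filling statement, whose norm is bounded by $T_2=M(S_1,1,\epsilon,g)\cdot T_1$. Extend $\Gamma$--equivariantly; the two cases are compatible because $\Gamma$ acts by isometries. This proves (1), and (3) of Proposition \ref{ofeuihwln} implicitly (uniform norm bound on $\phi$).

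For (2), exactly as in the proof of Proposition \ref{ofeuihwln}, the chain $\phi(\partial(x_0,x_1,x_2,x_3))$ is automatically a $2$--cycle: indeed, $\partial\phi(\partial(x_0,\ldots,x_3))=Q(\partial\partial(x_0,\ldots,x_3))=0$. Its $\ell^1$--norm is bounded by $4T_2$, and its support is contained in the $(S_1+S_1')$--neighborhood of the 1--skeleton of a geodesic tetrahedron on $\{x_0,\ldots,x_3\}$ (where $S_1'$ is the neighborhood constant coming from the filling statement). A second application of the uniform filling statement, this time in dimension $2$, then provides the desired $B\in C_3^{\Delta}(\mathcal X)_{\red}$ with $\partial B=\phi(\partial(x_0,\ldots,x_3))$ and $\|B\|_1\leq T_3$ for $T_3=4T_2\cdot M(S_1+S_1',2,\epsilon,g)$.

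The main obstacle is formalizing the uniform filling statement (ii). In the main body this is handled by the intricate Theorem \ref{cicli in quasi grafi}, which must grapple with deep excursions into horoballs. Here the setting is genuinely easier: $\Gamma$ is a closed hyperbolic 3--manifold group, hence word-hyperbolic, and the Rips complex $\mathcal X$ is uniformly locally finite with $\mathcal X/\Gamma$ compact, so filling constants depend only on the combinatorial type of the cycle, which varies in finitely many $\Gamma$--orbits once the diameter is bounded. Nevertheless some care is needed to extract bounds that depend only on $\epsilon$ and $g$ (and not on the particular $\psi$), which is why invoking Mineyev's machinery, uniform in the data $(\delta,D(\epsilon,g))$, is the cleanest route.
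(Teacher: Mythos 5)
Your proposal is essentially the paper's proof: both arguments rest entirely on Mineyev's machinery for bounded-valence $\delta$-hyperbolic graphs, the paper citing \cite[Theorem 10]{Mineyev1} for the bicombing with uniformly bounded triangle-filling norm and \cite[Proposition 12]{Mineyev1} for the second-order filling that yields property (2), and then adjusting on small simplices exactly as you do (your normalization $Q(x,y)=[x,y]$ for $d(x,y)\leq\kappa$ is the same bounded modification, and you correctly keep $\partial\phi=Q\circ\partial$ as an identity so that $\phi(\partial(x_0,\dots,x_3))$ is a genuine cycle). One caveat on your fallback justification of the uniform filling statement: the appeal to cocompactness and ``finitely many $\Gamma$-orbits'' is not actually available here, since the number of vertex orbits of $\Gamma\curvearrowright X$ is roughly $\tau(\psi)/\lambda$ and hence depends on $\psi$, and moreover the cycle $\phi(\partial(x_0,\dots,x_3))$ is spread along a geodesic tetrahedron of unbounded diameter, so no bounded-diameter/finite-type argument applies to it. Thus Mineyev's uniformity in $(\delta, D(\epsilon,g))$ is not merely the ``cleanest route'' but the only one on offer, which is precisely how the paper proceeds.
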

\begin{proof}
Since $X$ is $\delta$-hyperbolic and has uniformly bounded degree, Mineyev's construction gives a $\G$-equivariant,  anti-symmetric  homological bicombing $\phi_1:X^2\to C^\Delta_1(\calX)$ with the property 
that for each triple $(x_0,x_1,x_2)$, $\|\phi_1(\partial(x_0,x_1,x_2))\|_1\leq T_1(g,\epsilon)$ \cite[Theorem 10]{Mineyev1} (cfr. also \cite[Theorem 6.2]{GM} where it is observed that the constant $T_1$ in Mineyev's construction only depends on the valency of the 1-skeleton of $X$ and its hyperbolicity constant). 
By \cite[Proposition 12]{Mineyev1} there exists a filling $\phi:=\phi_2:X^3\to C^\Delta_2(\calX)_\red$ so that $\partial \phi(x_0,x_1,x_2)=\phi_1(x_0,x_1)+\phi_1(x_1,x_2)+\phi_1(x_2,x_0)$. Again by \cite{Mineyev1}, property (2) holds with a constant $T_3$ depending only on the hyperbolicity constant $\delta$ and the valency. We are free to modify $\phi$ on small simplices to ensure (1), because the norm of $\phi_1(x_0,x_1)$ is bounded by a function of the distance of $x_0,x_1$ (this is part of Mineyev's definition of 
quasi-geodesic bicombing).
\end{proof}
\subsection{The primitive of the volume form $\alpha$}
Just as we did in Section~\ref{sec:4},
in order to define a primitive $\alpha$ of a combinatorial volume form, we need to define a suitable sign $\epsilon$ for every triple of vertices of $X$. 

Fix a hyperbolization $\rho:\G_0\to\Isom(\H^2)$, and a lift $\wt h:\H^2\to \H^2$ of the pseudo-Anosov homeomorphism $\psi$ with a fixed point $\ov q$ in $\partial \H^2$. (Such a lift exists: consider a singular point $x\in\H^2$ for the lift of the singular foliation preserved by $\psi$. If we choose the lift $\wt h$  with the property that $\wt h(x)=x$, we get that the endpoints at infinity of the singular leaves through $x$ are fixed by $\wt h$.) 

For a triple $(x_0,x_1,x_2)$ of vertices of $X$, the sign $\epsilon(x_0,x_1,x_2)$ is $1$, $-1$ or $0$ depending on the orientation of the ideal triangle of $\mathbb H^2$ with vertices $\rho(p(x_i))\overline q$ 
(just as in Subsection \ref{subsec:area}). Proposition \ref{epsilon:prop} ensures that $\epsilon\in Z^2_b(\G\actson X)^\G$. 

Let $t\in \G$ be the stable letter of the HNN extension $\G=\G_0 *_{\psi_*}$.
As in Section \ref{sec:qc} we can define a simplicial cochain $F$ by setting
$$F([x_0,x_1,x_2])=\frac{1}3\epsilon(x_0,x_1,x_2)\sum_{i=0}^2 \theta(x_i).$$
In this context we have the following:
\begin{lemma}\label{lem:A3}
The simplicial cochain $F$ satisfies:
\begin{enumerate}
\item $|F(\partial[x_0,x_1,x_2,x_3])|\leq 2\kappa$ for any 3-simplex $[x_0,x_1,x_2,x_3]\in\calX$;
\item $\delta F$ is $\G$-invariant.
\end{enumerate}
\end{lemma}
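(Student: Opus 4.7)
The plan is to adapt the two computations in Lemma~\ref{estimate:F}, with the simplification that $\theta$ is now a genuine $1$-Lipschitz function on $X$ (rather than an abstract Lipschitz function on $\Z$), and with the added subtlety that $F$ itself is \emph{not} $\G$-invariant but will only fail to be so by a scalar multiple of the cocycle $\epsilon$.

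For part (1), fix a $3$-simplex $[x_0,x_1,x_2,x_3]$ of $\calX$ and set $m=\sum_{j=0}^{3}\theta(x_j)$. Then by definition
\[
F(\partial_i[x_0,x_1,x_2,x_3])=\frac{\epsilon(x_0,\ldots,\widehat{x_i},\ldots,x_3)}{3}\bigl(m-\theta(x_i)\bigr),
\]
and the cocycle identity $\sum_{i=0}^{3}(-1)^i\epsilon(x_0,\ldots,\widehat{x_i},\ldots,x_3)=0$ (recall Proposition~\ref{epsilon:prop}) cancels the term involving $m$, leaving
\[
F(\partial[x_0,x_1,x_2,x_3])=-\sum_{i=0}^{3}(-1)^i\frac{\epsilon(x_0,\ldots,\widehat{x_i},\ldots,x_3)}{3}\theta(x_i).
\]
Exactly as in the proof of Lemma~\ref{estimate:F}, since $\epsilon$ takes values in $\{-1,0,1\}$ and satisfies the cocycle identity, after a suitable permutation of the $x_i$ this sum equals $0$, $\pm(\theta(x_2)-\theta(x_3))$, or $\pm\bigl((\theta(x_0)-\theta(x_1))+(\theta(x_2)-\theta(x_3))\bigr)$. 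Since $\calX=\calR_\kappa(X)$, the vertices of the simplex satisfy $d_X(x_i,x_j)\le \kappa$, and since $\theta\colon X\to\R$ is $1$-Lipschitz, each difference $|\theta(x_i)-\theta(x_j)|$ is at most $\kappa$. The estimate $|F(\partial[x_0,x_1,x_2,x_3])|\le 2\kappa$ follows.

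For part (2), the key observation is that $\theta$ is only equivariant with respect to the $\G$-action on $\R$ induced by the homomorphism $c\colon\G\to\R$, $c(\gamma_0 t^k)=k\lambda$. Explicitly, for every $g\in\G$ and $x\in X$ one has $\theta(g^{-1}x)=\theta(x)-c(g)$. Using the standard cochain action $(g\cdot F)(\sigma)=F(g^{-1}\sigma)$ and the $\G$-invariance of $\epsilon$, I compute
\[
(g\cdot F)([x_0,x_1,x_2])=\frac{\epsilon(x_0,x_1,x_2)}{3}\sum_{i=0}^{2}\bigl(\theta(x_i)-c(g)\bigr)=F([x_0,x_1,x_2])-c(g)\,\epsilon(x_0,x_1,x_2).
\]
Hence $g\cdot F-F=-c(g)\,\epsilon$ is a scalar multiple of the cocycle $\epsilon$, so $\delta(g\cdot F-F)=-c(g)\,\delta\epsilon=0$. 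This gives $g\cdot\delta F=\delta F$ for every $g\in\G$, proving that $\delta F$ is $\G$-invariant.

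Neither step is a serious obstacle: the main subtlety (if any) is just to check carefully the bookkeeping of the $\G$-action on $\theta$, i.e.\ that the failure of $\theta$ to be $\G$-invariant is a mere additive constant along each orbit, so that its contribution to $F$ via the $\G$-invariant cocycle $\epsilon$ is automatically killed by $\delta$.
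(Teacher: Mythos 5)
Your proposal is correct. Part (1) is essentially identical to the paper's argument: both reduce $F(\partial\sigma)$, via the cocycle identity for $\epsilon$, to one of the three expressions $0$, $\pm(\theta(x_2)-\theta(x_3))$, $\pm\bigl((\theta(x_0)-\theta(x_1))+(\theta(x_2)-\theta(x_3))\bigr)$, and then invoke the $1$-Lipschitz property of $\theta$ together with $d(x_i,x_j)\le\kappa$. For part (2) you take a slightly different and arguably cleaner route: the paper deduces invariance of $\delta F$ from the explicit description of $F(\partial\sigma)$ just obtained (the differences $\theta(x_i)-\theta(x_j)$ are unaffected by $\theta(t\cdot x)=\theta(x)+\tau(\psi)$, and the permutation is governed by the $\G$-invariant $\epsilon$), whereas you prove the global identity $g\cdot F-F=-c(g)\,\epsilon$ and conclude from $\delta\epsilon=0$. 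Your version isolates the conceptual reason more transparently ($F$ is invariant modulo multiples of a cocycle), at the cost of having to introduce the homomorphism $c$; the paper's version recycles the case analysis it already needs for (1). One cosmetic slip: in the appendix the correct homomorphism satisfies $c(t)=\tau(\psi)=k\lambda$ (the stable letter shifts $\theta$ by $\tau(\psi)$, not by $\lambda$), so your formula $c(\gamma_0 t^k)=k\lambda$ conflates the appendix's $\theta$ with the one from Section~\ref{sec:4}; this does not affect the argument, which only uses that $c$ is \emph{some} homomorphism $\G\to\R$ with $\theta(g^{-1}x)=\theta(x)-c(g)$.
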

\begin{proof}
The same computation as in Lemma \ref{estimate:F} gives that, up to reordering the vertices $x_i$, either $F(\partial [x_0,x_1,x_2,x_3])=0$ or $F(\partial[x_0,x_1,x_2,x_3])=\theta(x_2 ) - \theta( {}x_3)$, or
$F(\partial[x_0,x_1,x_2,x_3])=(\theta( {x_0} ) -\theta( {x_1})) + (\theta({x_2} ) -\theta( {x_3}))$. Since $\theta$ is 1-Lipschitz and $d(x_i,x_j)\leq \kappa$, (1) follows.

(2) follows from the description of $F(\partial [x_0,x_1,x_2,x_3])$ we have just given, and from the fact that  $\epsilon$ is $\G$-invariant and $\theta(t\cdot x)= \theta (x)+\tau(\psi)$.
\end{proof}

The $\G_0$-invariant primitive of the volume form is the evaluation of $F$ on fillings of simplices:
$$\alpha(x_0,x_1,x_2)=F(\phi(x_0,x_1,x_2)).$$
An immediate consequence of Lemma \ref{lem:A2} (2) and Lemma \ref{lem:A3} (1) is: 
\begin{lemma} 
The defect of $\alpha$ is uniformly bounded:
$$\|\delta\alpha\|_\infty\leq 2\kappa T_3\ .$$
\end{lemma}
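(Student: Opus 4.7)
The plan is to reduce the estimate to the combination of the two lemmas that precede it, exactly as in the proof of Proposition \ref{quasi-cocycle:prop} in the main text. First I would unwind the definition: for any 4-tuple $(x_0,x_1,x_2,x_3)$ of vertices of $X$, by linearity of $F$,
\[
\delta\alpha(x_0,x_1,x_2,x_3)=\sum_{i=0}^{3}(-1)^{i}F\bigl(\phi(x_0,\ldots,\widehat{x_i},\ldots,x_3)\bigr)=F\bigl(\phi(\partial(x_0,x_1,x_2,x_3))\bigr).
\]
Thus the bound on $\|\delta\alpha\|_\infty$ reduces to bounding $|F(\phi(\partial(x_0,x_1,x_2,x_3)))|$ uniformly in the 4-tuple.

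Next I would apply Lemma \ref{lem:A2}(2): there exists a simplicial 3-chain $B\in C_3^{\Delta}(\calX)_{\red}$ with $\phi(\partial(x_0,x_1,x_2,x_3))=\partial B$ and $\|B\|_1\le T_3$. Writing $B=\sum_j\lambda_j\sigma_j$ as a reduced linear combination with $\sum_j|\lambda_j|\le T_3$, linearity gives
\[
|\delta\alpha(x_0,x_1,x_2,x_3)|=|F(\partial B)|=\Bigl|\sum_j\lambda_j\,F(\partial\sigma_j)\Bigr|\le\sum_j|\lambda_j|\cdot|F(\partial\sigma_j)|.
\]
Finally, each $\sigma_j$ is a $3$-simplex of $\calX$, so Lemma \ref{lem:A3}(1) gives $|F(\partial\sigma_j)|\le 2\kappa$; plugging this in yields
\[
|\delta\alpha(x_0,x_1,x_2,x_3)|\le 2\kappa\sum_j|\lambda_j|\le 2\kappa T_3,
\]
which is the required bound.

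There is really no obstacle here: the hard work has already been done in building $\phi$ with the bounded relative filling property (Lemma \ref{lem:A2}) and in checking that the cochain $F$ has bounded coboundary on individual simplices (Lemma \ref{lem:A3}(1)). The only thing to notice is that one must use the relative filling $B$ furnished by Lemma \ref{lem:A2}(2) as an $\ell^1$-efficient substitute for $\phi(\partial(x_0,x_1,x_2,x_3))$, and that $F$ is actually evaluated on $\partial B$, for which the uniform per-simplex bound $2\kappa$ from Lemma \ref{lem:A3}(1) applies.
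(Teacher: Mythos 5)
Your argument is correct and is exactly the computation the paper intends when it states the lemma as ``an immediate consequence of Lemma \ref{lem:A2}~(2) and Lemma \ref{lem:A3}~(1)''; it also mirrors the analogous estimate in Proposition \ref{quasi-cocycle:prop} of the main text. One small remark: in the appendix setting the filling $B$ of Lemma \ref{lem:A2}~(2) satisfies $\phi(\partial(x_0,x_1,x_2,x_3))=\partial B$ exactly (there are no horoball correction terms, since the fiber is closed), so calling it a ``relative'' filling is unnecessary --- which is precisely why your chain of inequalities goes through with no extra term to discard.
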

\subsection{The volume estimate}\label{sec:voles} 
In order to estimate the simplicial volume, recall that the Rips complex $\calX$ is contractible, in particular we can choose a simplicial chain $S\in C_2^\Delta(\calX)_{\red,\G_0}$ representing the fundamental class of a fiber.
\begin{lemma} If $S=\sum c(\sigma)\sigma$ then
$$\sum_{\sigma}c(\sigma)\epsilon(\sigma)=-2\chi(\Sigma)\ .$$
\end{lemma}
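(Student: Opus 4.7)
The strategy is to interpret the sum as a cohomological pairing on $\G_0$, and then compute it via Gauss--Bonnet applied to the uniformization $\rho\colon\G_0\to\Isom(\matH^2)$. The key observation is that $\epsilon$ is, up to the factor $\pi$, the combinatorial area cocycle of ideal triangles in $\matH^2$ associated to $\rho$.

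Since the Rips complex $\calX$ is contractible and $\G_0$ acts freely on its vertex set $X^{(0)} = \G_0\times\Z$ (by left translation on the first factor), the quotient $\calX/\G_0$ is a $K(\G_0,1)$ and its reduced simplicial (co)homology canonically computes $H_*(\G_0;\R)$ and $H^*(\G_0;\R)$. By hypothesis $S$ represents the fundamental class of the fiber, i.e.\ $[S] = [\Sigma_g] \in H_2(\G_0;\R)$; moreover $\epsilon$ is a $\G$-invariant alternating cocycle (Proposition~\ref{epsilon:prop}), so it descends to a class $[\epsilon]\in H^2(\G_0;\R)$. Therefore
$$\sum_\sigma c(\sigma)\epsilon(\sigma) \;=\; \langle [\epsilon], [\Sigma_g]\rangle.$$

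Next, note that $\epsilon$ depends only on the triple $(\rho(p(x_i))\cdot\ov q)_{i=0,1,2}\in(\partial\matH^2)^3$, and every non-degenerate ideal triangle in $\matH^2$ has signed hyperbolic area exactly $\pm\pi$ (with sign equal to its orientation). Hence $\pi\epsilon = A_\rho$ as $\G_0$-invariant cocycles on triples of vertices of $X$, where $A_\rho(x_0,x_1,x_2)$ is the signed hyperbolic area of the ideal triangle with vertices $\rho(p(x_i))\cdot\ov q$. The class $[A_\rho]\in H^2(\G_0;\R)$ is the hyperbolic area class of the uniformization, so
$$\langle [A_\rho],[\Sigma_g]\rangle \;=\; \mathrm{Area}_{\matH^2}(\Sigma_g) \;=\; -2\pi\chi(\Sigma_g)$$
by Gauss--Bonnet, and dividing by $\pi$ gives the claim.

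The one nontrivial ingredient, which I expect to be the main obstacle, is the identification $\langle [A_\rho],[\Sigma_g]\rangle = \mathrm{Area}_{\matH^2}(\Sigma_g)$, since $A_\rho$ is defined in terms of ideal rather than compact triangles. The cleanest route I have in mind is to realise $A_\rho$ as the pointwise limit, as $v\to\ov q$ in $\matH^2$, of the uniformly bounded $\G_0$-invariant cocycles $A_\rho^v(g_0,g_1,g_2) :=$ signed hyperbolic area of the geodesic triangle in $\matH^2$ with vertices $\rho(g_i)v$. For each $v$, straightening a simplicial fundamental cycle of $\Sigma_g$ to geodesic triangles based at the $\rho(\G_0)$-orbit of $v$ tiles $\Sigma_g$ with signs, giving $\langle [A_\rho^v],[\Sigma_g]\rangle = -2\pi\chi(\Sigma_g)$ by Gauss--Bonnet, independently of $v$. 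Since $\|A_\rho^v\|_\infty \leq \pi$ uniformly and $A_\rho^v\to A_\rho$ pointwise, the pairings converge and the identity persists in the limit. (As an alternative to this limiting argument one may invoke the standard fact that the orientation cocycle on $(\partial\matH^2)^3$ represents twice the real bounded Euler class of $\rho$, together with the Milnor--Wood equality for uniformizing representations.)
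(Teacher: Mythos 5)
Your proof is correct and follows essentially the same route as the paper's: identify $\pi\epsilon$ with the signed area cocycle of ideal triangles for the uniformization $\rho$, pair with the fundamental class of the fiber, and apply Gauss--Bonnet to get $-2\pi\chi(\Sigma_g)$. The paper's own proof is just these two sentences; your limiting argument from compact to ideal triangles (or the bounded Euler class alternative) merely fills in a detail the paper leaves implicit.
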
  
\begin{proof}
The oriented area of a hyperbolic triangle is $\pi$ times the orientation cocycle $\epsilon$. Using Gauss-Bonnet this implies that the pairing of $\epsilon$ with a fundamental class $S$ of the surface $\Sigma$ is 
equal to $-2|\chi(\Sigma)|$, that is the volume of the surface divided by $\pi$.
\end{proof}

\begin{lemma}
There exists a simplicial chain $M\in C_3(\G_0\actson X)_{\red,\G_0}$ with $\partial M= t\cdot S-S$. The image of $M$ in $ C_3(\G\actson X)_{\red,\G}$ represents the fundamental class $[M_\psi]\in H_3(M_\psi,\R)$.
\end{lemma}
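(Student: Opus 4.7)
The argument has three parts.

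\emph{First}, I verify that $t\cdot S - S$ descends to a well-defined $2$-cycle in $C_2(\G_0\actson X)_{\red,\G_0}$. Since $t\in\G$ satisfies $tgt^{-1}=\psi(g)\in\G_0$ for every $g\in\G_0$, the chain $t\cdot S$ is $\G_0$-coinvariant: for any $g\in\G_0$,
$$g\cdot(t\cdot S)=t\cdot(\psi^{-1}(g)\cdot S)\equiv t\cdot S$$
modulo $\G_0$-coinvariance. Moreover $\partial(t\cdot S)=t\cdot\partial S=0$.

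\emph{Second}, I show $[t\cdot S - S]=0$ in $H_2(\G_0\actson X)$, which produces the required $M$. Since $\calX$ is contractible (by a theorem of Rips, as $X$ is $\delta$-hyperbolic and $\kappa\geq 4\delta+6$) and $\G_0\subset\G$ acts freely on $\calX$ through vertex translations, the reduced simplicial chain complex $C^\Delta_*(\calX)_{\red,\G_0}$ (and hence also $C_*(\G_0\actson X)_{\red,\G_0}$, into which the former naturally embeds) computes the homology of the $K(\G_0,1)$-space $\calX/\G_0\simeq \Sigma_g$. In particular $H_2(\G_0\actson X)\cong H_2(\Sigma_g;\R)\cong\R$. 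The element $t$ induces a self-homeomorphism $\bar t$ of $\calX/\G_0$ whose action on $\pi_1=\G_0$ is $\psi_*$; since $\psi$ is an orientation-preserving pseudo-Anosov, $\bar t$ acts as $+1$ on $H_2(\Sigma_g;\R)$. Therefore $[t\cdot S]=[S]$, so $[t\cdot S-S]=0$ and any $M\in C_3(\G_0\actson X)_{\red,\G_0}$ with $\partial M=t\cdot S-S$ satisfies the first assertion of the lemma; concretely, one can first produce a simplicial $3$-chain in $C^\Delta_3(\calX)_{\red,\G_0}$ and then view it inside the larger complex.

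\emph{Third}, I identify the image of $M$ in $C_3(\G\actson X)_{\red,\G}$ with the fundamental class of $M_\psi$. After passing to $\G$-coinvariants, $t\cdot S - S$ becomes $0$, so the image of $M$ is a $3$-cycle in $C_3(\G\actson X)_{\red,\G}$, a complex which computes the homology of $\calX/\G\simeq M_\psi$. This construction is the combinatorial analog of the standard chain-level description of the fundamental class of a mapping torus: regarding $M_\psi$ as $\Sigma_g\times[0,1]/(x,0)\sim(\psi(x),1)$, the class $[M_\psi]$ is represented by a $3$-chain of the form $\Sigma_g\times[0,1]$ whose boundary is the formal difference of the two fibers $\Sigma_g\times\{1\}$ and $\Sigma_g\times\{0\}$ (identified via $\psi_*$). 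Our $M$, with $\partial M=t\cdot S-S$ and $S$ a fundamental class of a fiber, is exactly this cylinder, and so its image in $C_3(\G\actson X)_{\red,\G}$ represents $[M_\psi]$.

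\emph{Main obstacle.} The delicate point is Step 3: ensuring that the image of $M$ represents \emph{exactly} $[M_\psi]$ and not some a priori unspecified multiple of it. A fully rigorous verification can proceed either via the Serre spectral sequence of the fibration $\Sigma_g\to M_\psi\to S^1$ -- whose non-zero $E_\infty^{1,2}\cong H_1(S^1;\R)\otimes H_2(\Sigma_g;\R)$ term is realized precisely by the combined cycle just constructed -- or by explicit pairing of the image of $M$ with a cohomology class known to evaluate non-trivially on $[M_\psi]$ (for instance the cohomology class dual to the fiber). Note that the lemma is invoked subsequently only to bound $\vol(M_\psi)\leq C\|M\|_1\cdot\|\alpha\|_\infty$, so one really needs the precise value, not merely non-triviality.
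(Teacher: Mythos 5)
Your proposal is correct and follows essentially the same route as the paper: both reduce the statement to the standard topological fact about mapping tori, namely that a $3$-chain in the infinite cyclic cover bounding $\overline{t}[\Sigma]-[\Sigma]$ projects to the fundamental class of $M_\psi$ (the paper sets this up via equivariant chain-level isomorphisms with $C_*(\overline{M}_\psi)$, while you phrase it via the Wang/Serre sequence of the fibration over $S^1$, which is the same mechanism). Your Steps 1 and 2 (coinvariance of $t\cdot S$ and existence of $M$ from $\overline{t}_*=\mathrm{id}$ on $H_2(\Sigma_g;\R)$) are details the paper leaves implicit, and your flagged obstacle in Step 3 is handled exactly by the injectivity of $H_3(M_\psi)\to H_2(\overline{M}_\psi)$ coming from $H_3(\overline{M}_\psi)=0$, which also shows the class of the image of $M$ does not depend on the choice of $M$.
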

\begin{proof}
Denote by $\ov M_\psi$ the infinite cyclic cover of $M_\psi$. Since $\calX$ is contractible,  the orbit maps define maps $C_*(\G_0)_{\red,\G_0}\to C_*(\G_0\actson X)_{\red,\G_0}$ 
and $C_*(\G_0)_{\red,\G_0}\to C_*(\ov M_\psi)_{\red}$ inducing isomorphisms in homology. 
Each complex is endowed with a $\Z$-action 
(with the positive generators of $\Z$ acting as $\psi_*$ on $C_*(\G_0)_{\red,\G_0}$, and as the positive generator of the deck transformation group of $\ov M_\psi$ on $C_*(\ov M_\psi)_{\red}$) 
and the isomorphisms are equivariant with respect to these actions. 

Using these isomorphisms the lemma follows from the corresponding statement for the topological counterpart, namely that if $[\S]\in H_2(\ov M_\psi,\R)$ is represented by a fiber and $\overline{t}$ 
is a generator of the deck group, then $\overline{t}[\S]-[\S]$ is the boundary of a 3-cycle projecting to the fundamental class of $M_\psi$.
\end{proof}

Recall that the simplicial volume $\|N\|$ of a closed oriented manifold $N$ is the $\ell^1$-seminorm of its real fundamental class. A fundamental result by Gromov and Thurston (see e.g.~\cite{Thurston}) states that there exists
a positive constant $v_n$ only depending on $n$ such that $\vol(N)=v_n \|N\|$ for every closed orientable hyperbolic $n$-manifold $N$. Therefore, Theorem \ref{thm:A1} is an immediate consequence of the following:

\begin{prop}
$$\|M_\psi\|\geq \frac {-2\tau(\psi)\chi(\Sigma_g)}{\kappa T_3}\ .$$
\end{prop}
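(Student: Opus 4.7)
\emph{Proof sketch.} The plan is to upgrade $\delta\alpha$ to a genuine $\G$-invariant bounded $3$-cocycle on $\G\actson X$, evaluate it on $[M_\psi]$ through the explicit fundamental cycle $M$ furnished by the preceding lemma, and conclude by the standard $\ell^1/L^\infty$ duality bounding simplicial volume from below.

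The first (and most delicate) step is to establish that, although $\alpha$ is only $\G_0$-invariant, the coboundary $\delta\alpha$ is actually $\G$-invariant. Since $\G=\langle\G_0,t\rangle$, only $t$-invariance requires checking. Using the $\G$-equivariance of $\phi$ (Lemma~\ref{lem:A2}), the identity $\theta(tx)=\theta(x)+\tau(\psi)$, and the $\G$-invariance of $\epsilon$ (Proposition~\ref{epsilon:prop}), one computes on each simplex $[y_0,y_1,y_2]$ of $\calX$ that $F([ty_0,ty_1,ty_2])=F([y_0,y_1,y_2])+\epsilon(y_0,y_1,y_2)\tau(\psi)$, and extending linearly to $\phi(\sigma)$,
$$
\alpha(t\sigma)-\alpha(\sigma)=\tau(\psi)\langle\epsilon,\phi(\sigma)\rangle=:\tau(\psi)\gamma(\sigma)\quad\text{for every }\sigma\in X^3.
$$
Applying $\delta$ and using Lemma~\ref{lem:A2}(2) to write $\sum_i(-1)^i\phi(\partial_i[x_0,\ldots,x_3])=\partial B$, we get
$$
\delta\gamma(x_0,\ldots,x_3)=\Big\langle\epsilon,\sum_i(-1)^i\phi(\partial_i[x_0,\ldots,x_3])\Big\rangle=\langle\delta\epsilon,B\rangle=0,
$$
since $\epsilon$ is a cocycle. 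Thus $\delta\alpha\in C_b^3(\G\actson X)^\G$ is a bounded $\G$-invariant (trivially closed) $3$-cocycle, representing a class in $H_b^3(\G)\cong H_b^3(M_\psi)$.

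Next, the value of $\delta\alpha$ on $[M_\psi]$ is computed using the fundamental cycle $M$ of the previous lemma. Every $2$-simplex appearing in the simplicial chain $S$ lies in $\calX$, so by Lemma~\ref{lem:A2}(1), $\phi$ restricts to the identity on $\supp(S)$ and $\alpha$ coincides with $F$ there. Applying $F(t\sigma)=F(\sigma)+\epsilon(\sigma)\tau(\psi)$ term by term to each simplex of $S$, and invoking the preceding lemma ($\sum c(\sigma)\epsilon(\sigma)=-2\chi(\Sigma_g)$), gives
$$
\delta\alpha(M)=\alpha(\partial M)=\alpha(tS-S)=\tau(\psi)\sum_\sigma c(\sigma)\epsilon(\sigma)=-2\chi(\Sigma_g)\tau(\psi).
$$

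Finally, given any real fundamental cycle $c\in C_3(\G\actson X)_{\red,\G}$ of $M_\psi$, the identity $[c]=[M]$ in $H_3(\G\actson X)_\G$ gives $c-M=\partial e$, and since $\delta\alpha$ is $\G$-invariant and closed we deduce $\delta\alpha(c)=\delta\alpha(M)$. Combining with the defect bound $\|\delta\alpha\|_\infty\le 2\kappa T_3$ of the preceding lemma, we obtain
$$
2|\chi(\Sigma_g)|\tau(\psi)=|\delta\alpha(c)|\le\|\delta\alpha\|_\infty\cdot\|c\|_1\le 2\kappa T_3\,\|c\|_1,
$$
and taking the infimum over all fundamental cycles $c$ of $M_\psi$ yields the stated lower bound for $\|M_\psi\|$. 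The main obstacle in executing this plan is the first step: passing from the $\G_0$-invariance of $\alpha$ to the $\G$-invariance of $\delta\alpha$ crucially exploits the fact that $\epsilon$ is a $\G$-invariant cocycle and that $\phi$ admits a uniformly bounded filling of the coboundary of a triangle (Lemma~\ref{lem:A2}(2)).
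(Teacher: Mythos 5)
Your proposal is correct and follows essentially the same route as the paper: evaluate $\delta\alpha$ on the fundamental cycle $M$ via $\alpha(tS-S)=-2\chi(\Sigma_g)\tau(\psi)$ and conclude by the $\ell^1/\ell^\infty$ duality with the defect bound $\|\delta\alpha\|_\infty\le 2\kappa T_3$. Your explicit verification that $\delta\alpha$ is $\G$-invariant (via the cocycle $\gamma=\langle\epsilon,\phi(\cdot)\rangle$ and Lemma~\ref{lem:A2}(2)) is a useful expansion of what the paper leaves implicit in Lemma~\ref{lem:A3}(2) together with the $\G$-equivariance of $\phi$.
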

\begin{proof}
Since $M$ represents $[M_\psi]$, we have
\begin{equation}\label{simplicial1}
|\delta\alpha(M)|\leq \|[M_\psi]\|_1\cdot \|[\delta\alpha]\|_\infty=\|M_\psi\| \cdot \|[\delta\alpha]\|_\infty \leq 2\kappa T_3 \|M_\psi\|\ . 
\end{equation}

By construction, for any $x\in X$ we have $\theta(t\cdot x)=\tau(\psi)+\theta(x)$. In particular, for any 3-simplex $\sigma=[x_0,x_1,x_2]\in\calX$  we get $\alpha(t\cdot \sigma)-\alpha(\sigma)=\epsilon(\sigma)\tau(\psi)$. This implies 
\begin{equation}\label{simplicial2}
 \delta\alpha(M)=\alpha(\partial M)=\tau(\psi)\sum_{\sigma}c(\sigma)\epsilon(\sigma)=-2\chi(\Sigma_g)\tau(\psi)\ .
\end{equation}
The conclusion now follows from~\eqref{simplicial1} and~\eqref{simplicial2}.
 \end{proof}
 
%\todo{Federico: immagino sia invece https://arxiv.org/abs/math/0106190. Bea: I riferimenti (tipo "sezione x") sono all'articolo che effettivamente citiamo in bibliografia, ma forse quello che dici tu e' una referenza migliore (ma andrebbe cambiato tutto un po')}

\end{document}